\def\vextra{\vphantom{\vrule height0.45cm width0.9pt depth0.1cm}}
\def\wtd{\widetilde}
\def\what{\widehat}
\DeclareMathOperator{\diag}{diag}
\DeclareMathOperator{\eig}{eig}
\DeclareMathOperator{\rank}{rank}
\DeclareMathOperator{\tr}{tr}
\DeclareMathOperator{\F}{F}
\DeclareMathOperator{\HH}{H}
\DeclareMathOperator{\T}{T}
\DeclareMathOperator{\UI}{ui}
\def\bbC{\mathbb{C}}
\def\bbR{\mathbb{R}}
\def\cR{{\cal R}}
\def\cX{{\cal X}}
\def\cY{{\cal Y}}
\def\be{\pmb{e}}
\def\bu{\pmb{u}}
\def\bx{\pmb{x}}
\def\by{\pmb{y}}
\def\bz{\pmb{z}}
\def\bU{\pmb{U}}
\def\bY{\pmb{Y}}
\def\sss{\scriptscriptstyle}
\newcommand\munderbar[1]{%
  \underaccent{\bar}{#1}}
\def\cH{{\cal H}}
\DeclareMathOperator{\app}{}
\DeclareMathOperator{\BT}{bt}
\DeclareMathOperator{\df}{d}
\DeclareMathOperator{\err}{err}
\DeclareMathOperator{\rd}{rd}
\def\scrS{\mathscr{S}}
\newtheorem{theorem}{Theorem}[section]
\newtheorem{lemma}{Lemma}[section]
\newtheorem{corollary}{Corollary}[section]
\theoremstyle{definition}
\newtheorem{remark}{Remark}[section]
\def\sss{\scriptstyle}
\numberwithin{equation}{section}
\numberwithin{figure}{section}
\numberwithin{table}{section}
\newcounter{question}
\title{Quality of Approximate Balanced Truncation}
\author{Lei-Hong Zhang%
\thanks{School of Mathematical Sciences, Soochow University, Suzhou 215006, Jiangsu, China.
        This work was
 supported in part by the National Natural Science Foundation of China NSFC-12071332, NSFC-12371380, and  Jiangsu Shuangchuang Project (JSSCTD202209).
        Email: {\tt longzlh@suda.edu.cn}. }
\and
Ren-Cang Li%
\thanks{Department of Mathematics, University of Texas at Arlington, Arlington, TX 76019-0408, USA.
        Supported in part by NSF DMS-1719620 and DMS-2009689.
        Email: {\tt rcli@uta.edu}.}
}
\date{\today}
\begin{document}

\maketitle

\begin{abstract}
Model reduction is a powerful tool in dealing with numerical simulation of large scale dynamic systems
for studying complex physical systems. Two major
types of model reduction methods for linear time-invariant dynamic systems are Krylov subspace-based methods
and balanced truncation-based methods. The methods of the second type are much more theoretically sound than the first type in that there is a fairly tight global error bound on the approximation error between the original system and
the reduced one. It is noted that the error bound
is established based upon the availability of the exact  controllability and observability Gramians. However, numerically,
the Gramians are not available and have to be numerically calculated, and for a large scale system,
a viable option is to compute low-rank approximations of the Gramians from which
an approximate balanced truncation is then performed. Hence, rigorously speaking,
the existing  global error bound is not applicable to any reduced system obtained via approximate Gramians. The goal of this paper is to address this issue by establishing global error bounds
for reduced systems via approximate balanced truncation.

\bigskip
\noindent
{\bf Keywords:}  model reduction, balanced Truncation, transfer function, controllability  Gramian, observability Gramian, Hankel singular value, low-rank approximation, error bound

\smallskip
\noindent
{\bf Mathematics Subject Classification}  78M34, 93A15, 93B40
\end{abstract}

\clearpage
\tableofcontents

\clearpage
\section{Introduction}\label{sec:intro}
Model reduction is a powerful tool in dealing with numerical simulation of large scale dynamic systems
for studying complex physical systems \cite{anto:2005,freu:2003,liba:2005}. In this paper, we are interested in the following
continuous linear time-invariant dynamic system
\begin{subequations}\label{eq:CDS}
\begin{align}
\bx'(t) &=A\,\bx(t)+ B\,\bu(t), \quad\mbox{given $\bx(0)=\bx_0$}, \label{eq:CDS-1}\\
\by(t) &=C^{\T}\,\bx(t), \label{eq:CDS-2}
\end{align}
\end{subequations}
where $\bx\,:\,t\in[0,\infty)\to\bbR^n$ is the state vector,
and $\bu\,:\,t\in[0,\infty)\to\bbR^m$ is the input, $\by\,:\,t\in[0,\infty)\to\bbR^p$ is the output, and
$A\in\bbR^{n\times n},\,B\in\bbR^{n\times m},\,C\in\bbR^{n\times p}$ are constant matrices that define the dynamic system.
In today's applications of interests, such as very large scale integration (VLSI)  circuit designs and structural dynamics,
$n$ can be up to millions \cite{ansg:2001,bai:2002,freu:2003}, but usually the dimensions of input and output vectors
are much smaller, i.e., $p,\,m\ll n$. Large $n$ can be an obstacle in practice both computationally and in memory
usage. Model reduction is then called for to overcome the  obstacle.

In a nutshell, model reduction for dynamic system \eqref{eq:CDS} seeks two matrices $X,\,Y\in\bbR^{n\times r}$ such that $Y^{\T}X=I_r$ to
reduce the system \eqref{eq:CDS} to
\begin{subequations}\label{eq:rCDS}
\begin{align}
\hat\bx_r'(t) &=A_r\,\hat\bx_r(t)+ B_r\,\bu(t), \quad\mbox{given $\hat\bx_r(0)=Y^{\T}\bx_0$}, \label{eq:rCDS-1}\\
\by(t) &=C_r^{\T}\,\hat\bx_r(t), \label{eq:rCDS-2}
\end{align}
\end{subequations}
where $A_r,\,B_r,\,C_r$ are given by
\begin{equation}\label{eq:reduced-matrices}
A_r:=Y^{\T}AX\in\bbR^{r\times r}, \quad
B_r:=Y^{\T}B\in\bbR^{r\times m}, \quad
C_r:=X^{\T}C\in\bbR^{r\times p}.
\end{equation}
Intuitively,
this reduced system \eqref{eq:rCDS} may be thought of obtaining from \eqref{eq:CDS}
by letting $\bx=X\hat\bx_r$ and performing Galerkin projection with $Y$.
The new state vector $\hat\bx_r$ is now in $\bbR^r$, a much smaller space in dimension than $\bbR^n$. In practice,
for the reduced system to be of any use, the two systems must be ``close'' in some sense.

Different model reduction methods differ in their choosing $X$ and $Y$, the projection matrices. There are two major
types: Krylov subspace-based methods \cite{bai:2002,freu:2003,liba:2005}
and balanced truncation-based methods \cite{anto:2005,guan:2004}. The methods of the first type
are computationally more efficient for large scale systems and reduced models are accurate around points where Krylov subspaces are built,
while those of the second type are theoretically sound
in that fairly tight global error bounds are known but numerically much more expensive in that
controllability and observability Gramians which are provably positive definite have to be computed at cost of $O(n^3)$ complexity.

Modern balanced truncation-based methods have improved, thanks to the discovery that the Gramians are usually
numerically low-rank \cite{ansz:2002,penz:2000,sabi:2006} and methods that compute their low-rank approximations in the factor form
\cite{liwh:2004,belt:2009}. The low-rank factors are then naturally used to compute an approximate balanced truncation.
Moments ago, we pointed out the advantage of balanced truncation-based methods in their sound global approximations guaranteed by
tight global error bounds, but these bounds are established based on exact Gramians and hence the exiting global error bounds, though suggestive,
are no longer valid. To the best of our knowledge,
there is no study as to the quality of reduced models by modern balanced truncation-based methods that use
the low-rank approximate Gramians. Our aim in this paper is to address the void.

The rest of this paper is organized as follows. \Cref{sec:MOR:review} reviews the basics of
balanced truncation methods. \Cref{sec:approxBT} explains approximate balanced truncation,
when some low-rank approximations of controllability and observability Gramians, not the exact Gramians
themselves, are available. In \Cref{sec_error_apr_balance}, we establish our main results to
quantify the accuracy of the reduced model by approximate balanced reduction. We draw our conclusions
and makes some remarks. Some preliminary material on subspaces of $\bbR^n$ and perturbation for Lyapunov equation are discussed in appendixes.

\bigskip
\noindent
{\bf Notation.}
$\bbR^{m\times n}$  is the set of $m\times n$ real matrices,  $\bbR^n=\bbR^{n\times 1}$, and $\bbR=\bbR^1$.
$I_n\in\bbR^{n\times n}$ is
the identity matrix or simply $I$ if its size is clear from the context, and $\be_j$ is the $j$th column of $I$ of apt size.
$B^{\T}$ stands for the transpose of a matrix/vector.
$\cR(B)$ is the column subspace of $B$, spanned by its columns.
For $B\in\bbR^{m\times n}$, its singular values are
$$
\sigma_1(B)\ge\sigma_2(B)\ge\cdots\ge\sigma_k(B)\ge 0,
$$
where $k=\min\{m,n\}$, and $\sigma_{\max}(B)=\sigma_1(B)$ and $\sigma_{\min}(B)=\sigma_k(B)$.
$\|B\|_2$, $\|B\|_{\F}$, and $\|B\|_{\UI}$ are its spectral and Frobenius
norms:
$$
\|B\|_2=\sigma_1(B),\,\,
\|B\|_{\F}=\Big(\sum_{i=1}^k[\sigma_i(B)]^2\Big)^{1/2},\,\,
$$
respectively. $\|B\|_{\UI}$ is some unitarily invariant norm of $B$ \cite{lizh:2015,stsu:1990}.
For a matrix $A\in\bbR^{n\times n}$ that is known to have real eigenvalues only, $\eig(A)=\{\lambda_i(A)\}_{i=1}^n$ denotes the set of its eigenvalues (counted by multiplicities)
arranged in the decreasing order, and $\lambda_{\max}(A)=\lambda_1(A)$ and $\lambda_{\min}(A)=\lambda_n(A)$.
$A\succ 0\, (\succeq 0)$ means that it is symmetric and positive definite (semi-definite), and
accordingly
$A\prec 0\, (\preceq 0)$ if $-A\succ 0\, (\succeq 0)$.
MATLAB-like notation is used to access the entries of a matrix:
$X_{(i:j,k:\ell)}$ to denote the submatrix of a matrix $X$, consisting of the intersections of
rows $i$ to $j$ and columns $k$ to $\ell$, and when $i : j$ is replaced by $:$, it means all rows, similarly for columns.

\section{Review of balanced truncation}\label{sec:MOR:review}
In this section, we will review the balanced truncation, minimally to the point to serve our purpose in this paper.
The reader is referred to \cite{anto:2005} for a more detailed exposition.

Consider continuous linear time-invariant dynamic system \eqref{eq:CDS} and suppose that it is stable, observable and controllable
\cite{anmo:1971,zhdg:1995}.

\subsection{Quality of a reduced order model}\label{ssec:MRd-fmwk}
Suppose initially $\bx_0=0$. Applying the Laplacian transformation to \eqref{eq:CDS} yields
$$
\bY(s)=\underbrace{C^{\T}(sI_n-A)^{-1}B}_{:=H(s)}\bU(s), \quad s\in\bbC,
$$
where $\bU(s)$ and $\bY(s)$ are the Laplacian transformations of $\bu$ and $\by$, respectively, and $H(s)\in \bbC^{p\times m}$
is the so-called {\em transfer function\/} of system \eqref{eq:CDS}.
Conveniently, we will adopt the notation
to denote the system \eqref{eq:CDS} symbolically by
$$
\scrS=\left(\begin{array}{c|c}
               A & B \\ \hline
              \vextra C^{\T}&
              \end{array}\right)
$$
with the round bracket to distinguish it from the square bracket for matrices.
The {\em infinity Hankel norm\/} of the system $\scrS$, also known as the {\em infinity Hankel norm\/} of $H(\cdot)$, is defined as
\begin{equation}\label{eq:hankelH}
\|\scrS\|_{{\cal H}_{\infty}}=\|H(\cdot)\|_{{\cal H}_{\infty}}:=\sup_{\omega\in \bbR}\|H(\iota \omega)\|_2
   =\sup_{\omega\in \bbR}\sigma_{\max}(H(\iota\omega)),
\end{equation}
where $\|\cdot\|_2$ is the spectral norm of a matrix, and $\iota=\sqrt{-1}$ is the imaginary unit.

In \Cref{sec:intro}, we introduced the framework of model reduction with two matrices
$X,\,Y\in\bbR^{n\times r}$ such that $Y^{\T}X=I_r$. For the ease of our presentation going forward, we shall
rename them as $X_1,\,Y_1\in\bbR^{n\times r}$ and $Y_1^{\T}X_1=I_r$. Next we look for
$X_2,\,Y_2\in\bbR^{n\times (n-r)}$ such that
\begin{equation}\label{eq:TTinv-why}
I_n=[Y_1,Y_2]^{\T}[X_1,X_2]=\begin{bmatrix}
                       Y_1^{\T} \\
                       Y_2^{\T}
                     \end{bmatrix}[X_1,X_2]=\begin{bmatrix}
                                              Y_1^{\T}X_1 & Y_1^{\T}X_2 \\
                                              Y_2^{\T}X_1 & Y_2^{\T}X_2
                                            \end{bmatrix}.
\end{equation}
Such $X_2,\,Y_2\in\bbR^{n\times (n-r)}$  always exist by \Cref{lm:acute-basis,lm:bi-orth-basis} if
$\|\sin\Theta(\cR(X_1),\cR(Y_1))\|_2<1$. In any practical model reduction method,
only $X_1,\,Y_1$ need to be produced. That $X_2,\,Y_2$ are introduced here is only for our analysis later. Denote by
\begin{equation}\label{eq:TTinv}
T=[Y_1,Y_2]^{\T}, \quad T^{-1}=[X_1,X_2],
\end{equation}
which are consistent because of \eqref{eq:TTinv-why}.
To the original system \eqref{eq:CDS}, perform transformation: $\bx(t)=T\hat\bx(t)$, to get
\begin{subequations}\label{eq:T-CDS}
\begin{align}
\hat\bx'(t) &=\what A\,\hat\bx(t)+ \what B\,\bu(t), \quad\hat\bx(0)=T^{-1}\bx_0, \label{eq:B-CDS-1}\\
\by(t) &=\what C^{\T}\,\hat\bx(t), \label{eq:T-CDS-2}
\end{align}
where
\begin{equation}\label{eq:Ttrans}
\what A = TAT^{-1},  \quad \what B = T B, \quad\what C = T^{-\T}C,
\end{equation}
\end{subequations}
naturally partitioned as
\begin{equation}\nonumber
\what A=\kbordermatrix{ &\sss r &\sss n-r\\
        \sss r & \what A_{11} & \what A_{12} \\
        \sss n-r & \what A_{21}  & \what A_{22} },\quad
\what B=\kbordermatrix{\sss \\
        \sss r & \what B_1 \\
        \sss n-r & \what B_2},\quad
\what C=\kbordermatrix{\sss \\
        \sss r & \what C_1 \\
        \sss n-r & \what C_2}.
\end{equation}
In particular,
\begin{equation}\label{eq:reduced-matrices:BT}
\what A_{11}:=Y_1^{\T}AX_1\in\bbR^{r\times r}, \quad
\what B_1:=Y_1^{\T}B\in\bbR^{r\times m}, \quad
\what C_1:=X_1^{\T}C\in\bbR^{r\times p}.
\end{equation}
One can verify that the transfer functions of \eqref{eq:CDS} and \eqref{eq:T-CDS} are exactly the same.

In current notation, the reduced system \eqref{eq:rCDS} in \Cref{sec:intro} takes the form
\begin{subequations}\label{eq:CDS:Redu:general}
\begin{align}
\hat\bx_r'(t) &=\what A_{11}\,\hat\bx_r(t)+\what B_1\,\bu(t), \quad\hat\bx_r(0)=Y_1^{\T}\bx_0, \label{eq:CDS:Redu-1:general}\\
\by(t) &=\what C_1^{\T}\,\hat\bx_r(t), \label{eq:CDS:Redu-2:general}
\end{align}
\end{subequations}
which will be denoted in short by
$\scrS_{\rd}=\left(\begin{array}{c|c}
                 \what A_{11} & \what B_1 \\ \hline
                 \vextra\what C_1^{\T}&
                 \end{array}\right)$.
Its transfer function  is given by
\begin{equation}\label{eq:TF:Redu-2:general}
H_{\rd}(s)=\what C_1^{\T}(sI-\what A_{11})^{-1}\what B_1, \quad s\in\bbC.
\end{equation}
Naturally, we would like that the full system \eqref{eq:T-CDS} and its reduced one \eqref{eq:CDS:Redu:general} are ``close''.
One way to measure the closeness is the $\cH_{\infty}$-norm of the difference
between the two transfer functions \cite{anto:2005,zhdg:1995}:
$$
\|H(\cdot)-H_{\rd}(\cdot)\|_{\cH_{\infty}}:=\sup_{\omega\in\bbR}\|H(\iota\omega)-H_{\rd}(\iota\omega)\|_2,
$$
assuming both systems are stable, observable and controllable \cite{zhdg:1995}.
Another way is by $\cH_2$-norm which we will get to later.
It turns out that $
H_{\err}(s)=H(s)-H_{\rd}(s)
$
 is the transfer function of an expanded dynamic system:
\begin{subequations}\nonumber
\begin{align}
\begin{bmatrix}
 \hat\bx'(t) \\
 \hat\bx_r'(t)
\end{bmatrix}&=\begin{bmatrix}
                 \what A &  \\
                  & \what A_{11}
               \end{bmatrix}
              \begin{bmatrix}
               \hat\bx(t) \\
               \hat\bx_r(t)
              \end{bmatrix}+\begin{bmatrix}
                             \what B \\
                              \what B_1
                            \end{bmatrix}\bu(t), \quad
                            \begin{bmatrix}
                             \hat\bx(0) \\
                             \hat\bx_r(0)
                            \end{bmatrix}=
                            \begin{bmatrix}
                             T^{-1}\bx_0 \\
                             Y_1^{\T}\bx_0
                            \end{bmatrix}, \label{eq:errLTI-1}\\
\hat\by(t)&=\begin{bmatrix}
             \what C \\
              -\what C_1
            \end{bmatrix}^{\T}\begin{bmatrix}
               \hat\bx(t) \\
               \hat\bx_r(t)
              \end{bmatrix}, \label{eq:errLTI-2}
\end{align}
\end{subequations}
or in the short notation
$$
\scrS_{\err}=\left(\begin{array}{cc|c}
                     \what A_{11} & & \what B \\
                     & \what A_{11} & \what B_1 \\ \hline
                     \vextra\what C^{\T} &  -\what C_1^{\T}&
                     \end{array}\right).
$$

The key that really determines the quality of a reduced system is the subspaces
$\cX_1:=\cR(X_1)$ and $\cY_1:=\cR(Y_1)$ as far as the transfer function \eqref{eq:TF:Redu-2:general} is concerned, as guaranteed by
the next theorem.

\begin{theorem}\label{thm:spaces=key}
Given the subspaces $\cX_1$ and $\cY_1$ of dimension $r$ such that $\|\sin\Theta(\cX_1,\cY_1)\|_2<1$,
any realizations of their basis matrices $X_1,\,Y_1\in\bbR^{n\times r}$ satisfying $Y_1^{\T}X_1=I_r$, respectively, do not
affect the transfer function \eqref{eq:TF:Redu-2:general} of reduced system \eqref{eq:CDS:Redu:general}.
\end{theorem}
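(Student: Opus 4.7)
The statement is really an invariance-under-change-of-basis claim. The plan is to show that any two admissible pairs of basis matrices for $\cX_1$ and $\cY_1$ are related by a nonsingular change of basis $G\in\bbR^{r\times r}$, and that under this change of basis the three reduced matrices $\what A_{11}, \what B_1, \what C_1$ transform like a state-space similarity, which leaves the transfer function $H_{\rd}(s)=\what C_1^{\T}(sI-\what A_{11})^{-1}\what B_1$ unchanged.

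\textbf{Step 1: Existence and form of admissible pairs.} First I would note that the assumption $\|\sin\Theta(\cX_1,\cY_1)\|_2<1$ is used only to guarantee that biorthogonal bases exist at all (by \Cref{lm:acute-basis,lm:bi-orth-basis}, quoted already in the discussion leading to \eqref{eq:TTinv-why}); it plays no further role. Fix one admissible pair $(X_1,Y_1)$ with $Y_1^{\T}X_1=I_r$. Any other basis $\widetilde X_1$ of $\cX_1$ has the form $\widetilde X_1=X_1G$ for some invertible $G\in\bbR^{r\times r}$, and any other basis $\widetilde Y_1$ of $\cY_1$ has the form $\widetilde Y_1=Y_1H$ for some invertible $H\in\bbR^{r\times r}$. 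The biorthogonality requirement $\widetilde Y_1^{\T}\widetilde X_1=I_r$ then forces $H^{\T}G=I_r$, i.e.\ $H=G^{-\T}$.

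\textbf{Step 2: Similarity of reduced matrices.} Plugging into \eqref{eq:reduced-matrices:BT} I compute
\begin{equation}\nonumber
\widetilde A_{11}=\widetilde Y_1^{\T}A\widetilde X_1=G^{-1}\what A_{11}G,\qquad
\widetilde B_1=\widetilde Y_1^{\T}B=G^{-1}\what B_1,\qquad
\widetilde C_1^{\T}=\widetilde X_1^{\T}C\bigr|^{\T\!}\text{-form}=\what C_1^{\T}G.
\end{equation}
Thus the triple $(\widetilde A_{11},\widetilde B_1,\widetilde C_1^{\T})$ is obtained from $(\what A_{11},\what B_1,\what C_1^{\T})$ by the state-space similarity $G$.

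\textbf{Step 3: Invariance of the transfer function.} Using $sI-\widetilde A_{11}=G^{-1}(sI-\what A_{11})G$, hence $(sI-\widetilde A_{11})^{-1}=G^{-1}(sI-\what A_{11})^{-1}G$, the new transfer function is
\begin{equation}\nonumber
\widetilde H_{\rd}(s)=\widetilde C_1^{\T}(sI-\widetilde A_{11})^{-1}\widetilde B_1
=\what C_1^{\T}G\,G^{-1}(sI-\what A_{11})^{-1}G\,G^{-1}\what B_1
=\what C_1^{\T}(sI-\what A_{11})^{-1}\what B_1=H_{\rd}(s),
\end{equation}
for every $s\in\bbC$ where the inverse exists, completing the proof.

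\textbf{Obstacle.} Honestly, the argument is essentially routine linear algebra; there is no genuine obstacle beyond being careful that the biorthogonality constraint forces the two change-of-basis matrices to be $G$ and $G^{-\T}$ (and not independent), which is exactly what turns the nominally two-sided basis freedom into a single state-space similarity and preserves the transfer function.
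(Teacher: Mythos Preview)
Your proof is correct and follows essentially the same route as the paper's: fix one biorthogonal pair, write any other pair as $X_1G$ and $Y_1H$, use the constraint $H^{\T}G=I_r$ to identify $H=G^{-\T}$, and conclude that the reduced triple changes by a state-space similarity, leaving $H_{\rd}(s)$ invariant. The only difference is cosmetic (variable names and your explicit remark on the role of the angle condition).
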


\begin{proof}
Fix a pair of basis matrices $X_1,\,Y_1\in\bbR^{n\times r}$ of $\cX_1$ and $\cY_1$, respectively, such that $Y_1^{\T}X_1=I_r$.
Consider any other two  basis matrices $\check X_1,\,\check Y_1\in\bbR^{n\times r}$ of $\cX_1$ and $\cY_1$, respectively, such that $\check Y_1^{\T}\check X_1=I_r$. Then $\check X_1=X_1Z$ and $\check Y_1=Y_1W$ for some nonsingular $Z,\,W\in\bbR^{r\times r}$.
We have
$$
I_r=\check Y_1^{\T}\check X_1=(Y_1W)^{\T}(X_1Z)=W^{\T}(Y_1^{\T}X_1)Z=W^{\T}Z,
$$
implying $W^{\T}=Z^{-1}$, and
$$
\check Y_1^{\T}A\check X_1=Z^{-1}\big(Y_1^{\T}AX_1\big)Z,\,\,
\check Y_1^{\T}B=Z^{-1}\big(Y_1^{\T}B\big),\,\,
\check X_1^{\T}C=Z^{\T}\big(X_1^{\T}C\big).
$$
The transfer function associated with $\check X_1,\,\check Y_1$ is
\begin{align*}
\big(\check X_1^{\T}C\big)^{\T}\big(sI_r-\check Y_1^{\T}A\check X_1\big)^{-1}\big(\check Y_1^{\T}B\big)
  &=\big(X_1^{\T}C\big)^{\T}Z\big[Z^{-1}\big(sI_r-Y_1^{\T}AX_1\big)Z\big]^{-1}Z^{-1}\big(Y_1^{\T}B\big) \\
  &=\big(X_1^{\T}C\big)^{\T}\big(sI_r-Y_1^{\T}AX_1\big)^{-1}\big(Y_1^{\T}B\big),
\end{align*}
having nothing to do with $Z$ and $W$, as was to be shown.
\end{proof}

\subsection{Balanced truncation}\label{ssec:BT}
Balanced truncation fits into the general framework of model reduction, and thus it suffices for us to
define $X_1,\,Y_1\in\bbR^{n\times r}$ and $Y_1^{\T}X_1=I_r$ for balanced truncation accordingly.

 The controllability and observability Gramians $P$ and $Q$ are defined as the solutions to
the Lyapunov equations:
\begin{subequations}\label{eq:grams}
\begin{align}
AP+PA^{\T} & + BB^{\T}=0, \label{eq:lyap-cont}\\
A^{\T}Q+QA & + CC^{\T}=0, \label{eq:lyap-obse}
\end{align}
\end{subequations}
respectively. Under the assumption that dynamic system \eqref{eq:CDS} is stable, observable and controllable,
the Lyapunov equations have unique solutions that are positive definite, i.e., $P\succ 0$ and $Q\succ 0$. The model
order reduction based on balanced truncation \cite{anto:2005,beoc:2017} starts with a balanced transformation to
dynamic system \eqref{eq:CDS} such that both Gramians are the same and diagonal with diagonal entries being
the system's invariants, known as the {\it Hankel singular values\/} of the system.

Balanced truncation is classically introduced in the literature through
some full-rank decompositions of $P$ and $Q$:
\begin{equation}\label{eq:PQ:decomp}
P=SS^{\T}
\quad\mbox{and}\quad
Q=RR^{\T},
\end{equation}
where $S,\,R\in\bbR^{n\times n}$ and are nonsingular because $P\succ 0$ and $Q\succ 0$. But that is not necessary
in theory, namely $S,\, R$ do not have to be square, in which case both will have no fewer than $n$ columns because
the equalities in \eqref{eq:PQ:decomp} ensure $\rank(S)=\rank(P)$ and $\rank(R)=\rank(Q)$.
Later in \Cref{thm:BT-invariant}, we will show that balanced truncation is invariant
with respect to how the decompositions in \eqref{eq:PQ:decomp} are done, including non-square $S$ and $R$.
Such an invariance property is critical to our analysis.

Suppose that we have \eqref{eq:PQ:decomp} with
\begin{equation}\label{eq:SR-nonsqare}
S\in\bbR^{n\times m_1}
\quad\mbox{and}\quad
R\in\bbR^{n\times m_2}.
\end{equation}
Without loss of generality, we may assume $m_1\ge m_2$.
Let the SVD of $S^{\T}R\in\bbR^{m_1\times m_2}$ be
\begin{subequations}\label{eq:StR-SVD:BT}
\begin{equation} \label{eq:StR-SVD-1:BT}
S^{\T}R=U\Sigma V^{\T}\equiv
\kbordermatrix{ &\sss r &\sss m_1-r\\
                            & U_1 &  U_2 }\times
             \kbordermatrix{ &\sss r &\sss m_2-r\\
                           \sss r & \Sigma_1 & \\
                           \sss m_1-r & & \Sigma_2 } \times
             \kbordermatrix{ &\\
                           \sss r & V_1^{\T} \\
                           \sss m_2-r & V_2^{\T} },
\end{equation}
where
\begin{gather}
\Sigma_1=\diag(\sigma_1,\ldots,\sigma_r), \quad\Sigma_2=\begin{bmatrix}
                                                                 \diag(\sigma_{r+1},\ldots,\sigma_{m_2}) \\
                                                                 0_{(m_1-m_2)\times (m_2-r)}
                                                               \end{bmatrix},
       \label{eq:StR-SVD-2:BT} \\
\sigma_1\ge\sigma_2\ge\cdots\ge\sigma_{m_2}\ge 0. \label{eq:StR-SVD-3:BT}
\end{gather}
\end{subequations}
Only $\sigma_i$ for $1\le i\le n$ are positive and the rest are $0$.
Those $\sigma_i$ for $1\le i\le n$ are the so-called {\em Hankel singular values\/} of the system, and they are invariant with respect to different ways
of decomposing $P$ and $Q$ in \eqref{eq:PQ:decomp} with \eqref{eq:SR-nonsqare}, and, in fact, they are  the square roots of the eigenvalues of $PQ$, which are real and positive.
To see this, we note $\{\sigma_i^2\}$ are the eigenvalues of $(S^{\T}R)^{\T}(S^{\T}R)=R^{\T}SS^{\T}R=R^{\T}PR$ whose nonzero eigenvalues are the same as those
of $PRR^{\T}=PQ$.

Define
\begin{equation}\label{eq:T}
T=(\Sigma_{(1:n,1:n)})^{-1/2}V_{(:,1:n)}^{\T}R^{\T}.
\end{equation}
It can be verified that $T^{-1}=SU_{(:,1:n)}(\Sigma_{(1:n,1:n)})^{-1/2}$ because
\begin{align*}
\big[(\Sigma_{(1:n,1:n)})^{-1/2}V_{(:,1:n)}^{\T}R^{\T}\big]&\big[SU_{(:,1:n)}(\Sigma_{(1:n,1:n)})^{-1/2}\big] \\
 &=(\Sigma_{(1:n,1:n)})^{-1/2}V_{(:,1:n)}^{\T}(R^{\T}S)U_{(:,1:n)}(\Sigma_{(1:n,1:n)})^{-1/2} \\
 &=(\Sigma_{(1:n,1:n)})^{-1/2}V_{(:,1:n)}^{\T}(V\Sigma U^{\T})U_{(:,1:n)}(\Sigma_{(1:n,1:n)})^{-1/2} \\
 &=(\Sigma_{(1:n,1:n)})^{-1/2}\Sigma_{(1:n,1:n)}(\Sigma_{(1:n,1:n)})^{-1/2} \\
 &=I_n.
\end{align*}
With $T$ and $T^{-1}$, we define $\what A$, $\what B$, and $\what C$ according to \eqref{eq:Ttrans}, and, as a result,
the transformed system \eqref{eq:T-CDS}. In turn,
we have $A=T^{-1}\what AT$, $B=T^{-1}\what B$, and $C=T^{\T}\what C$. Plug these relations into \eqref{eq:grams} to get,
after simple re-arrangements,
\begin{subequations}\nonumber
\begin{align}
\what A(TPT^{\T})+(TPT^{-1})\what A^{\T} & +\what B\what B^{\T}=0, \label{eq:T-lyap-cont}\\
\what A^{\T}(T^{-\T}QT^{-1})+(T^{-\T}QT^{-1})\what A & + \what C\what C^{\T}=0, \label{eq:T-lyap-obse}
\end{align}
\end{subequations}
which are precisely the Lyapunov equations for the Gramians
\begin{equation}\label{eq:blcT}
\what P = T P T^{\T}, \quad\what Q = T^{-\T} Q T^{-1},
\end{equation}
of the transformed system \eqref{eq:T-CDS}. With the help of \eqref{eq:PQ:decomp}, \eqref{eq:StR-SVD:BT} and \eqref{eq:T},
it is not hard to verify that
\begin{equation}\nonumber
\what P=\what Q=\Sigma_{(1:n,1:n)},
\end{equation}
balancing out the Gramians.

Given integer $1\le r\le n$ (usually $r\ll n$), according to the partitions of $U$, $\Sigma$, and $V$
in \eqref{eq:StR-SVD:BT}, we write
\begin{subequations}\label{eq:X1Y1:BT}
\begin{align}
T^{-1}&=\Big[SU_1\Sigma_1^{-1/2},SU_2(\Sigma_2)_{(1:n-r,1:n-r)}^{-1/2}\Big]=:[X_1,X_2], \label{eq:X1:BT}\\
T&=\begin{bmatrix}
    \Sigma_1^{-1/2}V_1^{\T}R^{\T}\\
    (\Sigma_2)_{(1:n-r,1:n-r)}^{-1/2}V_2^{\T}R^{\T}
  \end{bmatrix}=:\begin{bmatrix}
                   Y_1^{\T} \\
                   Y_2^{\T}
                 \end{bmatrix},  \label{eq:Y1:BT}
\end{align}
\end{subequations}
leading to the reduced system \eqref{eq:CDS:Redu:general} in form but with
newly defined $X_1,\,Y_1\in\bbR^{n\times r}$ by \eqref{eq:X1Y1:BT}.
In the rest of this section, we will adopt the notations in \Cref{ssec:MRd-fmwk}
but with $X_1,\,Y_1$ given by \eqref{eq:X1Y1:BT}.

%

Balanced truncation as stated is a very expensive procedure that generates \eqref{eq:CDS:Redu:general} computationally. The computations of $P$ and $Q$ fully costs $O(n^3)$ each, by, e.g., the Bartels-Stewart algorithm
\cite{bast:1972}, decompositions $P=SS^{\T}$ and $Q=RR^{\T}$ costs  $O(n^3)$ each, and so does
computing SVD of $S^{\T}R$, not to mention $O(n^2)$ storage requirements. However, it is a well-understood method in that
the associated reduced system \eqref{eq:rCDS} inherits most important system properties of the original
system: being stable, observable and controllable, and also  there is a global error bound that
guarantees the overall quality of the reduced system.

%
%
%
%

In terms of Gramians, the ${\cal H}_{\infty}$- and ${\cal H}_2$-norms
of $H(\cdot)$ previously defined in \eqref{eq:hankelH} are given by (e.g., \cite[Section 5.4.2]{anto:2005})
\begin{align*}
\|H(\cdot)\|_{{\cal H}_{\infty}}
   &=\sqrt{\lambda_{\max}(PQ)}=\sigma_1, \\
\|H(\cdot)\|_{{\cal H}_2}
   &=\sqrt{\tr(B^{\T}QB)}=\sqrt{\tr(C^{\T}PC)},
\end{align*}
where $\sigma_1$ is the largest Hankel singular value in \eqref{eq:StR-SVD-3:BT}.
We remark that the transformations on $P$ and $Q$ as in \eqref{eq:blcT} for any nonsingular $T$, not necessarily
the one in \eqref{eq:T}, preserve  eigenvalues of $PQ$ because
$$
\what P\what Q=(T P T^{\T})( T^{-\T} Q T^{-1})=T(PQ)T^{-1}.
$$
For the ease of future reference, we will denote
by $H_{\BT}(s)$:
\begin{equation}\label{eq:H(s):BT}
H_{\BT}(s):=\what C_1^{\T}(s I_r-\what A_{11})^{-1}\what B_1,
\end{equation}
the transfer function of the reduced system \eqref{eq:CDS:Redu:general}
with $X_1,\,Y_1\in\bbR^{n\times r}$ as in \eqref{eq:X1Y1:BT} by the balanced truncation.

%
%

The next theorem is well-known;
see, e.g., \cite[Theorem 7.9]{anto:2005}, \cite[Theorem 8.16]{zhdg:1995}.

\begin{theorem}[\cite{anto:2005,zhdg:1995}]\label{thm:BT-thy}
For $X_1$ and $Y_1$ from the balanced truncation as in \eqref{eq:X1Y1:BT}, we have
\begin{equation}\label{eq:qBT}
\sigma_{r+1}\le\|H(\cdot)-H_{\BT}(\cdot)\|_{\cH_{\infty}}
  \le 2\sum_{j=r+1}^n\sigma_j,
\end{equation}
where $\sigma_1\ge\sigma_2\ge\cdots\ge\sigma_n$ are the Hankel singular values of the system, i.e.,
the first $n$ singular values of $S^{\T}R$.
\end{theorem}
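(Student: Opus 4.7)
The two inequalities call for rather different techniques, so I would treat them separately, with the bulk of the work going into the upper bound.

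For the lower bound $\sigma_{r+1} \le \|H(\cdot) - H_{\BT}(\cdot)\|_{\cH_\infty}$, the plan is to combine the standard inequality $\|G\|_{\cH} \le \|G\|_{\cH_\infty}$ (Hankel norm bounded by $\cH_\infty$ norm) with the Adamyan--Arov--Krein lower bound: for any stable rational $\widetilde H$ of McMillan degree at most $r$, the Hankel norm $\|H - \widetilde H\|_{\cH}$ is at least $\sigma_{r+1}(H) = \sigma_{r+1}$. Since $H_{\BT}$ has McMillan degree at most $r$, applying these two facts with $\widetilde H = H_{\BT}$ delivers the claim immediately.

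For the upper bound, my plan is the classical Enns-style iterative proof, built around a one-step key lemma: if a stable minimal realization $(A, B, C)$ has balanced Gramians $P = Q = \sigma I_k$ for a single scalar $\sigma > 0$, then $\|C^{\T}(\iota\omega I - A)^{-1} B\|_2 \le 2\sigma$ for every $\omega \in \bbR$. The proof of the lemma is algebraic: the two Lyapunov equations collapse to
\begin{equation*}
-\sigma(A + A^{\T}) = BB^{\T} = CC^{\T},
\end{equation*}
and substituting this identity into a symmetric combination of resolvents at $s = \iota\omega$ produces, after the cancellations forced by the identity, the uniform bound $2\sigma$.

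Given this lemma, the reduction from dimension $n$ down to $r$ is realized as a chain of single-step truncations, each removing one distinct Hankel singular value $\sigma_j$ together with its full multiplicity $m_j$. At each intermediate stage the current system is balanced with Gramian of the form $\diag(\Sigma_{\mathrm{keep}}, \sigma_j I_{m_j})$; extracting the leading block of the block-partitioned Lyapunov equation (together with controllability) shows that the truncated subsystem remains stable with Gramian $\Sigma_{\mathrm{keep}} \succ 0$, reproducing the balanced setup for the next iteration, while simultaneously the error introduced at this step admits a realization whose two Gramians are both $\sigma_j I_{m_j}$, so the key lemma delivers a per-step $\cH_\infty$ error of $2\sigma_j$. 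Summing these single-step errors via the triangle inequality in $\cH_\infty$ across the chain yields $\|H(\cdot) - H_{\BT}(\cdot)\|_{\cH_\infty} \le 2\sum_{j=r+1}^n \sigma_j$. The principal obstacle will be the one-step lemma itself---the constant $2\sigma$ does not fall out of routine manipulation and requires the simultaneous use of both Lyapunov equations---together with the bookkeeping that certifies the error system at each stage is genuinely balanced at level $\sigma_j$, so that the lemma is actually applicable.
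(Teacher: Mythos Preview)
The paper does not supply its own proof of this theorem: it is stated as a known result with pointers to \cite[Theorem~7.9]{anto:2005} and \cite[Theorem~8.16]{zhdg:1995}, and the subsequent \Cref{rk:generalRedu} further attributes the lower bound to \cite[Proposition~8.3]{anto:2005} and \cite[Lemma~8.5]{zhdg:1995}. Your outline---AAK-type lower bound plus the Enns/Glover one-step truncation lemma chained by the triangle inequality---is precisely the classical argument recorded in those references, so there is nothing to contrast.
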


\begin{remark}\label{rk:generalRedu}
{\rm
The left inequality in \eqref{eq:qBT} actually holds for any reduced system of order $r$, not necessarily from
balanced truncation. In fact, it is known that (see e.g., \cite[Proposition 8.3]{anto:2005}
and \cite[Lemma 8.5]{zhdg:1995})
\begin{equation}\nonumber
\sigma_{r+1}\le \|H(\cdot)-H_{\rd}(\cdot)\|_{{\cal H}_{\infty}},
\end{equation}
where $H_{\rd}(s)$ is the transfer function \eqref{eq:CDS:Redu:general} of
reduced system \eqref{eq:CDS:Redu:general} by any $X_1,\,Y_1\in\bbR^{n\times r}$ such that $Y_1^{\T}X_1=I_r$.
}
\end{remark}

One thing that is not clear yet and hasn't been drawn much attention in the literature is whether
the reduced system by the balanced truncation of order $r$ varies with the decompositions
$P=SS^{\T}$ and $Q=RR^{\T}$ which are not unique, including $S$ and $R$ that may not necessarily be square. This turns out to be an easy question to answer.

\begin{theorem}\label{thm:BT-invariant}
If $\sigma_r>\sigma_{r+1}$, then the transfer function of the reduced system \eqref{eq:CDS:Redu:general} by the balanced truncation of order $r$
is unique, regardless of any variations in the decompositions in \eqref{eq:PQ:decomp}.
\end{theorem}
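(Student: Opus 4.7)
My strategy is to reduce the statement to \Cref{thm:spaces=key} by showing that the two column subspaces $\cX_1:=\cR(X_1)$ and $\cY_1:=\cR(Y_1)$ extracted in \eqref{eq:X1Y1:BT} are uniquely determined by $P$, $Q$, and $r$, independently of the specific factorizations $P=SS^{\T}$ and $Q=RR^{\T}$. The biorthogonality $Y_1^{\T}X_1=I_r$ is built into the balanced construction, so the acuteness condition $\|\sin\Theta(\cX_1,\cY_1)\|_2<1$ required by \Cref{thm:spaces=key} follows automatically via \Cref{lm:bi-orth-basis}. Therefore, once I have intrinsic descriptions of $\cX_1$ and $\cY_1$, invoking \Cref{thm:spaces=key} immediately yields uniqueness of $H_{\BT}(\cdot)$.

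For the intrinsic description of $\cX_1$, I would argue as follows. Because $\Sigma_1$ is nonsingular, $\cX_1=\cR(SU_1)$, and the columns of $U_1$ span the top-$r$ left singular subspace of $S^{\T}R$, equivalently the top-$r$ eigenspace of $S^{\T}RR^{\T}S=S^{\T}QS$. The key one-line bridging identity
\[
PQ\,(Su)\;=\;SS^{\T}QS\,u\;=\;\sigma_i^2\,(Su)
\]
shows that left multiplication by $S$ sends the top-$r$ eigenspace of $S^{\T}QS$ into the invariant subspace of $PQ$ corresponding to its $r$ largest eigenvalues $\sigma_1^2,\ldots,\sigma_r^2$. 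Since $Y_1^{\T}X_1=I_r$ forces $X_1$, and hence $SU_1$, to have rank $r$, a dimension count promotes this inclusion to an equality. The gap hypothesis $\sigma_r>\sigma_{r+1}$ is precisely what makes this top-$r$ eigenspace of $PQ$ intrinsically defined, using that the eigenvalues of $PQ$ are real and positive as noted before \eqref{eq:T}.

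A symmetric argument, trading the roles of $(S,P,U_1)$ for $(R,Q,V_1)$ and working from the top-$r$ right singular subspace of $S^{\T}R$, identifies $\cY_1$ with the invariant subspace of $QP$ belonging to its $r$ largest eigenvalues. Both $\cX_1$ and $\cY_1$ then depend only on $P$, $Q$, and $r$; an appeal to \Cref{thm:spaces=key} closes the proof. The only genuinely substantive step is the displayed eigenvalue identity, which passes eigenvectors of $S^{\T}QS$ to eigenvectors of $PQ$ through $S$; the rest is a dimension check together with the symmetry between $(P,S,U_1)$ and $(Q,R,V_1)$.
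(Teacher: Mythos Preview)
Your proof is correct and takes a genuinely different route from the paper's. The paper proceeds by direct comparison: it fixes a square reference factorization $P=SS^{\T}$, $Q=RR^{\T}$, writes any other factorization as $\check S=SW$, $\check R=RZ$ with $W,Z$ having orthonormal rows, explicitly builds an SVD of $\check S^{\T}\check R$ from that of $S^{\T}R$, checks that the resulting projection matrices literally coincide with $SU_1\Sigma_1^{-1/2}$ and $RV_1\Sigma_1^{-1/2}$, and then handles the residual SVD non-uniqueness via the gap $\sigma_r>\sigma_{r+1}$ before invoking \Cref{thm:spaces=key}. Your argument instead supplies an intrinsic characterization: the intertwining identity $PQ(Su)=S(S^{\T}QS)u$ pins down $\cX_1$ as the invariant subspace of $PQ$ for its $r$ largest eigenvalues, and symmetrically $\cY_1$ as the top-$r$ invariant subspace of $QP$, both manifestly independent of the factorizations and of the SVD choice. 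This is shorter and more conceptual---it explains \emph{why} the subspaces are canonical rather than verifying it factorization by factorization---whereas the paper's explicit $W,Z$ bookkeeping has the advantage of showing exactly how the singular-vector matrices themselves transform, which could be reused if one needed that finer information. One minor correction: the implication ``$Y_1^{\T}X_1=I_r\Rightarrow\|\sin\Theta(\cX_1,\cY_1)\|_2<1$'' that you need is \Cref{lm:acute-basis}, not \Cref{lm:bi-orth-basis}.
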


\begin{proof}
We will show that the projection matrices $X_1$ and $Y_1$ defined in \eqref{eq:X1Y1:BT} are invariant with respect to
any choices of decompositions for $P$ and $Q$ of the said kind.
Suppose we have two different decompositions for each one of $P$ and $Q$
\begin{align*}
P&=SS^{\T}=\check S\check S^{\T}\quad\mbox{with}\,\, S\in\bbR^{n\times n},\, \check S\in\bbR^{n\times\check n_1}, \\
Q&=RR^{\T}=\check R\check R^{\T}\quad\mbox{with}\,\, R\in\bbR^{n\times n},\, \check R\in\bbR^{n\times\check n_2}.
\end{align*}
The idea is to show that after fixing one pair of decompositions $P=SS^{\T}$ and $Q=RR^{\T}$, $X_1$ and $Y_1$ constructed
from any other decompositions $P=\check S\check S^{\T}$ and $Q=\check R\check R^{\T}$, including nonsquare $\check S$ and $\check R$, remain the same. Evidentally $\check n_1,\,\check n_2\ge n$.

Without loss of generality, we may assume
$\check n_1\ge\check n_2$; otherwise we can append some columns of $0$ to $\check S$ from the right.


Since $\cR(P)=\cR(S)=\cR(\check S)$ and $\cR(Q)=\cR(R)=\cR(\check R)$,
there exist $W\in\bbR^{\check n_1\times n}$ and $Z\in\bbR^{\check n_2\times n}$ such that
$$
\check S=SW, \quad \check R=RZ.
$$
It can be verified that $WW^{\T}=I_n$ and $ZZ^{\T}=I_n$, i.e.,
both $W\in\bbR^{n\times \check n_1}$ and $Z\in\bbR^{n\times \check n_2}$ have orthonormal rows.
Suppose we already have the SVD of $S^{\T}R$ as
in \eqref{eq:StR-SVD:BT} with $m_1=m_2=n$.
Both $W^{\T}U\in\bbR^{\check n_1\times n}$ and $Z^{\T}V\in\bbR^{\check n_1\times n}$ have orthonormal columns.
There exist
$\check U_3\in\bbR^{\check n_1\times (\check n_1-n)}$ and
$\check V_3\in\bbR^{\check n_2\times (\check n_2-n)}$ such that
$$
[W^{\T}U, \check U_3]\in\bbR^{\check n_1\times \check n_1}
\quad\mbox{and}\quad
[Z^{\T}V, \check V_3]\in\bbR^{\check n_1\times \check n_1}
$$
are orthogonal matrices.
We have
\begin{align*}
\check S^{\T}\check R=W^{\T}S^{\T}RZ
   &=W^{\T}[U_1,U_2]
       \begin{bmatrix}
         \Sigma_1 & \\
         &\Sigma_2
       \end{bmatrix}
       \begin{bmatrix}
         V_1^{\T} \\
         V_2^{\T}
       \end{bmatrix} Z \\
   &=[W^{\T}U_1,W^{\T}U_2,\check U_3]
       \begin{bmatrix}
         \Sigma_1 & &\\
         &\Sigma_2 & \\
         & & 0_{(\check n_1-n)\times (\check n_2-n)}
       \end{bmatrix}
       \begin{bmatrix}
         (Z^{\T}V_1)^{\T} \\
         (Z^{\T}V_2)^{\T} \\
         \check V_3^{\T}
       \end{bmatrix},
\end{align*}
yielding an SVD of $\check S^{\T}\check R$, for which the corresponding projection matrices from $P=\check S\check S^{\T}$ and $Q=\check R\check R^{\T}$ are given by
$$
\check S(W^{\T}U_1)\Sigma_1^{-1/2}=SW(W^{\T}U_1)\Sigma_1^{-1/2}=SU_1\Sigma_1^{-1/2}
$$
and, similarly, $\check R(Z^{\T}V_1)\Sigma_1^{-1/2}=RV_1\Sigma_1^{-1/2}$,
yielding the same projection matrices as $X_1$ and $Y_1$ in \eqref{eq:X1Y1:BT} from
$P=SS^{\T}$ and $Q=RR^{\T}$, which in turn leads to the same reduced system \eqref{eq:CDS:Redu:general} and hence the same transfer function. Now let
$$
\check S^{\T}\check R
=[\check U_1,\check U_2,\check U_3]
       \begin{bmatrix}
         \Sigma_1 & &\\
         &\Sigma_2 & \\
         & & 0_{(\check n_1-n)\times (\check n_2-n)}
       \end{bmatrix}
       \begin{bmatrix}
         \check V_1^{\T} \\
         \check V_2^{\T} \\
         \check V_3^{\T}
       \end{bmatrix}
$$
be another SVD of $\check S^{\T}\check R$ subject to the inherent freedom in SVD, where $\check U_1\in\bbR^{\check n_1\times r}$ and $\check V_1\in\bbR^{\check n_2\times r}$. Since $\sigma_r>\sigma_{r+1}$, by
the uniqueness of singular subspaces, we know $\cR(\check U_1)=\cR(W^{\T}U_1)$ and $\cR(\check V_1)=\cR(Z^{\T}V_1)$. Therefore
$$
\cR(\check S\check U_1\Sigma_1^{-1/2})=\cR(\check S\check U_1)=\cR(\check SW^{\T}U_1)=\cR(\check SW^{\T}U_1\Sigma_1^{-1/2})=\cR(SU_1\Sigma_1^{-1/2}),
$$
and similarly, $\cR(\check R\check V_1\Sigma_1^{-1/2})=\cR(RV_1\Sigma_1^{-1/2})$, implying the same transfer function regardless of whether the reduced system is obtained by the projection matrix pair
$(X_1, Y_1)$ or by the pair $(\check S\check U_1\Sigma_1^{-1/2},\check R\check V_1\Sigma_1^{-1/2})$ by \Cref{thm:spaces=key}.
\end{proof}

\subsection{A variant of balanced truncation}\label{ssec:BT-var}
%

A distinguished feature of the transformation $T$ in \eqref{eq:X1Y1:BT} is that it makes
the transformed system \eqref{eq:T-CDS} balanced, i.e., both controllability and observability Gramians
are the same and diagonal, and so the reduced system \eqref{eq:CDS:Redu:general} is balanced, too.
But as far as just the transfer function of the reduced system is concerned, there is no need to have
$X_1$ and $Y_1$ precisely the same as the ones in \eqref{eq:X1Y1:BT} because of \Cref{thm:spaces=key}.
In fact, all that we need is to make sure
$\cR(X_1)=\cR(SU_1)$ and $\cR(Y_1)=\cR(RV_1)$, besides
$Y_1^{\T}X_1=I_r$. Specifically, we have by  \Cref{thm:spaces=key}

\begin{corollary}\nonumber
Let $\check X_1,\,\check Y_1\in\bbR^{n\times r}$ such that
\begin{equation}\label{eq:spaces=key:BT}
\cR(\check X_1)=\cR(SU_1),\,\,
\cR(\check Y_1)=\cR(RV_1),\,\,
\check Y_1^{\T}\check X_1=I_r.
\end{equation}
Then $H_{\BT}(s)\equiv \big(\check X_1^{\T}C\big)^{\T}\big(sI_r-\check Y_1^{\T}A\check X_1\big)^{-1}\big(\check Y_1^{\T}B\big)$, i.e.,
the reduced system \eqref{eq:CDS:Redu:general} with \eqref{eq:reduced-matrices:BT} obtained by replacing
$X_1,\,Y_1$ from \eqref{eq:X1Y1:BT} with $\check X_1,\,\check Y_1$ satisfying $\check Y_1^{\T}\check X_1=I_r$ has the same transfer function as the one
from the true balanced truncation.
\end{corollary}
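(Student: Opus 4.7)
The plan is to reduce the corollary to a direct invocation of \Cref{thm:spaces=key}. First, from the construction in \eqref{eq:X1Y1:BT}, I would observe that $X_1 = SU_1\Sigma_1^{-1/2}$ and $Y_1 = RV_1\Sigma_1^{-1/2}$. Since $\sigma_1\ge\cdots\ge\sigma_r>0$, the diagonal factor $\Sigma_1^{-1/2}$ is nonsingular, so right-multiplication by it does not alter column spaces. Hence
\[
\cR(X_1)=\cR(SU_1)=\cR(\check X_1),\qquad \cR(Y_1)=\cR(RV_1)=\cR(\check Y_1),
\]
the last equalities being the hypotheses in \eqref{eq:spaces=key:BT}.

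Next, I would verify that the subspaces $\cX_1:=\cR(SU_1)$ and $\cY_1:=\cR(RV_1)$ meet the standing requirement of \Cref{thm:spaces=key}, namely $\|\sin\Theta(\cX_1,\cY_1)\|_2<1$. This is immediate: the balanced truncation itself produces a concrete biorthogonal pair $X_1,Y_1$ of bases for $\cX_1,\cY_1$ with $Y_1^{\T}X_1=I_r$ (which can also be checked directly from \eqref{eq:X1Y1:BT} and \eqref{eq:StR-SVD:BT}), and the existence of such a biorthogonal basis pair is equivalent to the acute-angle condition by \Cref{lm:acute-basis,lm:bi-orth-basis}.

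With both the subspace identifications and the acute-angle hypothesis in hand, the conclusion follows by applying \Cref{thm:spaces=key} to the pair $\check X_1,\check Y_1$: since they form another biorthogonal basis of the same two subspaces $\cX_1,\cY_1$, the resulting transfer function
\[
\bigl(\check X_1^{\T}C\bigr)^{\T}\bigl(sI_r-\check Y_1^{\T}A\check X_1\bigr)^{-1}\bigl(\check Y_1^{\T}B\bigr)
\]
coincides with the one built from $X_1,Y_1$, which is by definition $H_{\BT}(s)$.

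There is essentially no obstacle here; the content has been done in \Cref{thm:spaces=key}, and the only thing worth writing down carefully is why the column-space identifications $\cR(X_1)=\cR(SU_1)$ and $\cR(Y_1)=\cR(RV_1)$ drop the $\Sigma_1^{-1/2}$ factors (which relies on $\sigma_r>0$, implicit in the balanced-truncation setup). Everything else is a one-line appeal to the earlier theorem.
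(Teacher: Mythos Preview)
Your proposal is correct and follows exactly the route the paper intends: the corollary is stated in the paper immediately after ``Specifically, we have by \Cref{thm:spaces=key}'' and is meant as a direct application of that theorem, which is precisely what you do. The only extra detail you supply---that $\Sigma_1^{-1/2}$ is nonsingular so the column spaces $\cR(X_1)=\cR(SU_1)$ and $\cR(Y_1)=\cR(RV_1)$ drop the diagonal factors, and that the acute-angle hypothesis holds via \Cref{lm:acute-basis}---is exactly the unwritten justification the paper leaves implicit.
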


$X_1,\,Y_1\in\bbR^{n\times r}$ defined by \eqref{eq:X1Y1:BT} for balanced truncation are difficult to work with
in analyzing the quality of balanced truncation. Luckily, the use of transfer function for analysis
allows us to focus on the subspaces $\cR(X_1)$ and $\cR(Y_1)$.
Later, instead of the concrete forms of $X_1$ and $Y_1$ in
\eqref{eq:X1Y1:BT}, we will work with the reduced system \eqref{eq:CDS:Redu:general} with
\begin{equation}\label{eq:use-X1Y1:BT}
X_1=SU_1, \,\, Y_1=RV_1\Sigma_1^{-1}.
\end{equation}
It is not hard to verify that $\cR(X_1)=\cR(SU_1)$, $\cR(Y_1)=\cR(RV_1)$, and
$Y_1^{\T}X_1=I_r$. Effectively, in the notations of \Cref{ssec:BT} up to SVD \eqref{eq:StR-SVD:BT}, this relates to transform
the original system \eqref{eq:CDS} to \eqref{eq:T-CDS} with
\begin{equation}\label{eq:X1Y1:BT-var}
T^{-1}=\Big[SU_1,SU_2\Big], \quad
T=\begin{bmatrix}
    \Sigma_1^{-1}V_1^{\T}R^{\T}\\
    (\Sigma_2)_{(1:n-r,1:n-r)}^{-1}V_2^{\T}R^{\T}
  \end{bmatrix}.
\end{equation}
Accordingly, the Gramians for the reduced system, by \eqref{eq:blcT}, are
\begin{equation}\label{eq:PQ:BT-var}
\what P=TPT^{\T}=I_n, \quad\what Q=T^{-\T}QT^{-1}=\Sigma_{(1:n,1:n)}^2,
\end{equation}
which are not balanced, but the reduced system has the same transfer function as by the balanced truncation
with \eqref{eq:X1Y1:BT} nonetheless.

\section{Approximate balanced truncation}\label{sec:approxBT}
When $n$ is large, balanced truncation as stated is a very expensive procedure both computationally and in storage usage.
Fortunately, $P$ and $Q$ are usually numerically low-rank \cite{beto:2019,penz:2000,baes:2015,sabi:2006,ansz:2002}, which
means, $P$ and $Q$ can be very well approximated by $\wtd P=\wtd S\wtd S^{\T}$ and $\wtd Q=\wtd R\wtd R^{\T}$, respectively, where
$\wtd S\in\bbR^{n\times \wtd r_1}$ and $\wtd R\in\bbR^{n\times \wtd r_2}$ with $\wtd r_1,\,\wtd r_2\ll n$.
Naturally, we will use $\wtd S$ and $\wtd R$ to play the roles of $S$ and $R$ in \Cref{ssec:BT}.
Specifically,
a model order reduction by
 approximate balanced truncation
 goes as follows.
\begin{enumerate}
  \item compute some low-rank approximations to $P$ and $Q$ in the product form
  \begin{equation}\label{eqapproxPQ}
        P\approx \wtd P=\wtd S\wtd S^{\T}, \quad
        Q\approx \wtd Q=\wtd R\wtd R^{\T},
  \end{equation}
        where $\wtd S\in\bbR^{n\times \wtd r_1}$ and $\wtd R\in\bbR^{n\times \wtd r_2}$.
        Without loss of generality, assume $\wtd r_1\ge\wtd r_2$, for our presentation.
  \item compute SVD
        \begin{equation}\nonumber
        \wtd S^{\T}\wtd R=
            \kbordermatrix{ &\sss r &\sss \wtd r_1-r\\
                            &\wtd U_1 & \wtd U_2 }\times
             \kbordermatrix{ &\sss r &\sss \wtd r_2-r\\
                           \sss r &\wtd \Sigma_1 & \\
                           \sss \wtd r_1-r & &\wtd \Sigma_2 } \times
             \kbordermatrix{ &\\
                           \sss r &\wtd V_1^{\T} \\
                           \sss \wtd r_2-r &\wtd V_2^{\T} },
        \end{equation}
        where 
        $\wtd \Sigma_1=\diag(\wtd\sigma_1,\wtd\sigma_2,\ldots,\wtd\sigma_r)$, and
        $\wtd \Sigma_2=\begin{bmatrix}
                         \diag(\wtd\sigma_{r+1},\wtd\sigma_2,\ldots,\wtd\sigma_{\wtd r_2}) \\
                         0_{(\wtd r_1-\wtd r_2)\times (\wtd r_2-r)} \\
                       \end{bmatrix}$  with
        these $\wtd\sigma_i$ arranged in the decreasing order, as in \eqref{eq:StR-SVD-3:BT} for $\sigma_i$.
  \item finally, $A$, $B$, and $C$ are reduced to
        \begin{equation}\label{eq:reduced-matrices:approxBT}
        \wtd A_{11}:=\wtd Y_1^{\T}A\wtd X_1, \quad
        \wtd B_1:=\wtd Y_1^{\T}B, \quad
        \wtd C_1:=\wtd X_1^{\T}C,
        \end{equation}
        where
        \begin{equation}\label{eq:X1Y1:approxBT}
        \wtd X_1=\wtd S\wtd U_1\wtd \Sigma_1^{-1/2}, \quad\wtd Y_1=\wtd R\wtd V_1\wtd \Sigma_1^{-1/2}.
        \end{equation}
\end{enumerate}
        It can be verified that $\wtd Y^{\T}\wtd X=I_r$.
Accordingly, we will have a reduced system
\begin{subequations}\label{eq:approxBT-CDS}
\begin{align}
\wtd\bx_r'(t) &=\wtd A_{11}\,\wtd\bx_r(t)+\wtd B_1\,\bu(t), \quad\mbox{given $\wtd\bx_r(0)=\wtd Y^{\T}\bx_0$}, \label{eq:approxBT-1}\\
\wtd \by(t) &=\wtd C_1^{\T}\,\wtd\bx_r(t), \label{eq:approxBT-2}
\end{align}
\end{subequations}
which will not be quite the same as \eqref{eq:rCDS}
with $X_1$ and $Y_1$ in \eqref{eq:X1Y1:BT} from the (exact) balanced truncation. The transfer function of \eqref{eq:approxBT-CDS} is
\begin{equation}\label{eq:approxH(s):BT}
\wtd H_{\BT}(s)=\wtd C_1^{\T}(sI-\wtd A_{11})^{-1}\wtd B_1, \quad s\in\bbC.
\end{equation}

One lingering question that has not been addressed in the literature is
how good reduced system \eqref{eq:approxBT-CDS} is, compared to the true reduced system of balanced truncation.
The seemingly convincing argument that if $\wtd P=\wtd S\wtd S^{\T}$ and $\wtd Q=\wtd R\wtd R^{\T}$ are sufficiently accurate then
$\wtd S^{\T}\wtd R$ should approximate $S^{\T}R$ well could be doubtful because usually $\wtd r_1,\,\wtd r_2\ll n$. A different argument may say otherwise.
In order for $\wtd P=\wtd S\wtd S^{\T}$ and $\wtd Q=\wtd R\wtd R^{\T}$ to approximate $P$ and $Q$ well, respectively,
both $\wtd S$ and $\wtd R$ must approximate the dominant components of the factors $S$ and $R$ of $P$ and $Q$ well. The problem is
$\wtd r_1,\,\wtd r_2\ll n$ here while it is possible that the dominant components of $S$ and $R$  could mismatch in forming $S^{\T}R$, i.e., in the unlucky scenario, the dominant components of $S$ match the least dominant components of $R$
in forming $S^{\T}R$ and simply extracting out the dominant components of $S$ and $R$ is not enough.
Hence it becomes critically important to provide theoretical analysis that shows the quality of
approximate balanced truncation derived from $\wtd P=\wtd S\wtd S^{\T}$ and $\wtd Q=\wtd R\wtd R^{\T}$, assuming
$\|P-\wtd P\|$ and $\|Q-\wtd Q\|$ are tiny.

By the same reasoning as we argue in Subsection~\ref{ssec:BT}, the transfer function $\wtd H_{\BT}(\cdot)$ stays
the same for any $\wtd X_1,\,\wtd Y_1\in\bbR^{n\times r}$ that satisfy
\begin{equation}\label{eq:tX1tY1:key:approxBT}
\cR(\wtd X_1)=\cR(\wtd S\wtd U_1), \quad\cR(\wtd Y_1)=\cR(\wtd R\wtd V_1)
\quad\mbox{such that}\quad
\wtd Y^{\T}\wtd X=I_r,
\end{equation}
and the  pair $(\wtd X_1,\wtd Y_1)$ in \eqref{eq:X1Y1:approxBT} is just one of many concrete pairs
that satisfy \eqref{eq:tX1tY1:key:approxBT}.
Again $\wtd X_1$ and $\wtd Y_1$ in \eqref{eq:X1Y1:approxBT}
for approximate balanced truncation are difficult to work with
in our later analysis. Luckily, we  can again focus on the subspaces $\cR(\wtd X_1)$ and $\cR(\wtd Y_1)$
because of \Cref{thm:spaces=key}.
Precisely what $\wtd X_1,\,\wtd Y_1\in\bbR^{n\times r}$ to use will be specified later in
\Cref{sec_error_apr_balance} so that they will be close to $X_1$ and $Y_1$ in \eqref{eq:use-X1Y1:BT}, respectively.

We reiterate our notations for the reduced models going forward.
\begin{itemize}
  \item $(\what A_{11},\what B_1,\what C_1)$ stands for the matrices for the reduced model \eqref{eq:CDS:Redu:general} by balanced truncation with $X_1$ and $Y_1$ in \eqref{eq:use-X1Y1:BT}. It is different from the one in the literature
      we introduced earlier with $X_1$ and $Y_1$ in \eqref{eq:X1Y1:BT}, but both share the same transfer function
      denoted by $H_{\BT}(\cdot)$.
  \item $(\wtd A_{11},\wtd B_1,\wtd C_1)$ stands for the matrices for the reduced model \eqref{eq:approxBT-CDS} by approximate balanced truncation with $\wtd X_1$ and $\wtd Y_1$ specified later in \eqref{eq:setup}.
      It is different from the one in the literature we introduced earlier with
      $\wtd X_1$ and $\wtd Y_1$ in \eqref{eq:X1Y1:approxBT}, but both share the same transfer function denoted by $\wtd H_{\BT}(\cdot)$.
\end{itemize}
In the rest of this paper,
assuming $\big\|P-\wtd P\big\|_2,\,\big\|Q-\wtd Q\big\|_2\le\epsilon$, we will
\begin{enumerate}[(i)]
  \item bound $\what A_{11}-\wtd A_{11}$, $\what B_1-\wtd B_1$, $\what C_1-\wtd C_1$ in terms of $\epsilon$, where
        $\what A_{11}$, $\what B_1$, and $\what C_1$ are from exact balanced truncation
        as in \eqref{eq:reduced-matrices:BT} with $X_1,\,Y_1$ given by \eqref{eq:use-X1Y1:BT},
        while $\wtd A_{11}$, $\wtd B_1$, and $\wtd C_1$ are from the approximate balanced truncation
        as in \eqref{eq:reduced-matrices:approxBT} with $\wtd X_1,\,\wtd Y_1\in\bbR^{n\times r}$ to be specified;
  \item bound $\big\|H_{\BT}(\cdot)-\wtd H_{\BT}(\cdot)\big\|$ and $\big\|H(\cdot)-\wtd H_{\BT}(\cdot)\big\|$ in terms of $\epsilon$
        for both $\|\cdot\|_{{\cal H}_{\infty}}$ and $\|\cdot\|_{{\cal H}_2}$.
\end{enumerate}

\section{Quality of the approximate balanced reduction}\label{sec_error_apr_balance}
The true balanced truncation requires computing the controllability and observability Gramians $P$ and $Q$
to the working precision, performing their full-rank decompositions (such as the Cholesky decomposition) and an SVD, each of which costs
$O(n^3)$ flops. It is infeasible for large scale dynamic systems. Luckily, the numbers of columns in $B$ and $C$ are usually of $O(1)$ and
$P$ and $Q$ numerically have extremely low ranks. In practice,
due to the fast decay of the Hankel singular values $\sigma_i$ \cite{ansz:2002,baes:2015,beoc:2017,beto:2019,penz:2000},
and the fact that solving the Lyapunov equations in \eqref{eq:grams}
for the full Gramians is too expensive and storing the full Gramians
takes too much space, we can only afford to compute low-rank approximations to $P$ and $Q$
in the product form as in \eqref{eqapproxPQ} \cite{liwh:2002,sabi:2006,soan:2002}. More than that,
$\wtd P$ and $\wtd Q$ approach $P$ and $Q$ from below, i.e.,
\begin{subequations}\label{eq:approxPQ}
\begin{equation}\label{eq:approxPQ-1}
0\preceq \wtd P=\wtd S\wtd S^{\T}\preceq P,~~0\preceq \wtd Q=\wtd R\wtd R^{\T}\preceq Q,
\end{equation}
where $\wtd S\in\bbR^{n\times \wtd r_1}$ and $\wtd R\in\bbR^{n\times \wtd r_2}$.
This is what we will assume about $\wtd P$ amd $\wtd Q$ in the rest of this paper, besides
\begin{equation}\label{eq:approxPQ-2}
\|P-\wtd P\|_2\le \epsilon_1,\quad\|Q-\wtd Q\|_2\le \epsilon_2
\end{equation}
\end{subequations}
for some sufficiently tiny $\epsilon_1$ and $\epsilon_2$. Except their existences, exactly what $P$, $Q$ and their full-rank factors $S$ and $R$ are
 not needed in our analysis. Because of \eqref{eq:approxPQ-1}, we may write
\begin{subequations}\nonumber
\begin{align}
P&=\wtd P+EE^{\T}=[\wtd S, E][\wtd S, E]^{\T}=SS^{\T}, \label{eq:PQEF-1} \\
Q&=\wtd Q+FF^{\T}=[\wtd R, F][\wtd R, F]^{\T}=RR^{\T}, \label{eq:PQEF-2}
\end{align}
where $E\in\bbR^{n\times p_1}$ and $F\in\bbR^{n\times p_2}$ are unknown, and neither are
\begin{equation}\label{eq:tStR->SR}
S=[\wtd S, E]\in\bbR^{n\times m_1}, \quad
R=[\wtd R, F]\in\bbR^{n\times m_2},
\end{equation}
\end{subequations}
$m_1=\wtd r_1+p_1$ and $m_2=\wtd r_2+p_2$. Without loss of generality, we may assume
\begin{equation}\nonumber
m_1\ge m_2;
\end{equation}
otherwise, we simply append a few columns of $0$ to $E$.
%
Let
\begin{subequations}\label{eq:GtG:BT}
\begin{align}
G&:=S^{\T}R = [\wtd S, E]^{\T}[\wtd R, F]
   =\kbordermatrix{ &\sss\wtd r_2 &\sss p_2 \\
               \sss\wtd r_1 & \wtd S^{\T}\wtd R & \wtd S^{\T}F \\
               \sss p_1 & E^{\T}\wtd R & E^{\T}F }, \label{eq:GtG-1:BT} \\
\wtd G&:=\kbordermatrix{ &\sss\wtd r_2 &\sss p_2 \\
               \sss\wtd r_1 & \wtd S^{\T}\wtd R & 0 \\
               \sss p_1 & 0 & 0 }
       =G-\kbordermatrix{ &\sss\wtd r_2 &\sss p_2 \\
               \sss\wtd r_1 & 0 & \wtd S^{\T}F \\
               \sss p_1 & E^{\T}\wtd R & E^{\T}F }. \label{eq:GtG-2:BT}
\end{align}
\end{subequations}
It is reasonable to require
$$
\wtd r_i\ge r\quad\mbox{for $i=1,2$},
$$
because we are looking for balanced truncation of order $r$.
\Cref{lem:bndEF} provides some basic inequalities we need in the rest of this paper.

\begin{lemma}\label{lem:bndEF}
Suppose that \eqref{eq:approxPQ} holds. Then
\begin{alignat}{2}
\|\wtd S\|_2=\sqrt{\|\wtd P\|_2}&\le\|S\|_2=\sqrt{\|P\|_2},
            &\quad\big\|\wtd R\big\|_2=\sqrt{\|\wtd Q\|_2}&\le\|R\|_2=\sqrt{\|Q\|_2}, \label{eq:nrmSR} \\
\|E\|_2&\le \sqrt{\epsilon_1}, &\quad\|F\|_2&\le \sqrt{\epsilon_2}, \label{eq:nrmEF}
\end{alignat}
and
\begin{align}
\|\wtd G-G\|_2&=\left\|\begin{bmatrix}
                   0 &   \wtd S^{\T}F \\
                   E^{\T}\wtd R  & E^{\T}F
                 \end{bmatrix}\right\|_2  \nonumber \\
             &\le\max\left\{\sqrt{\|P\|_2\epsilon_2},\sqrt{\|Q\|_2\epsilon_1}\right\}+\sqrt{\epsilon_1\epsilon_2}=:\varepsilon.
             \label{eq:nrm(GtG)}
\end{align}
\end{lemma}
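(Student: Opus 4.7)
The plan is to dispatch the three assertions in turn, since each reduces to a short computation once the right block identity is in place.

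First I would handle \eqref{eq:nrmSR}. The identity $\|XX^{\T}\|_2=\|X\|_2^2$, applied to each of $S$, $\wtd S$, $R$, $\wtd R$, gives the four equalities at once. The inequalities $\|\wtd P\|_2\le\|P\|_2$ and $\|\wtd Q\|_2\le\|Q\|_2$ are then just the monotonicity of the largest eigenvalue on the positive semidefinite cone, which is immediate from $\wtd P\preceq P$ and $\wtd Q\preceq Q$ in \eqref{eq:approxPQ-1}. Next, for \eqref{eq:nrmEF}, observe that by construction $EE^{\T}=P-\wtd P$ and $FF^{\T}=Q-\wtd Q$, so $\|E\|_2^2=\|EE^{\T}\|_2=\|P-\wtd P\|_2\le\epsilon_1$ by \eqref{eq:approxPQ-2}, and similarly for $F$.

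The interesting step is the bound \eqref{eq:nrm(GtG)}. My approach is to split the error block matrix into an anti-diagonal piece and a small corner piece,
\begin{equation*}
\wtd G-G \;=\; -\begin{bmatrix} 0 & \wtd S^{\T}F \\ E^{\T}\wtd R & 0 \end{bmatrix} \;-\; \begin{bmatrix} 0 & 0 \\ 0 & E^{\T}F \end{bmatrix}.
\end{equation*}
For the first summand $N$, a direct computation shows that $N^{\T}N$ is block diagonal with blocks $(\wtd R)^{\T}E E^{\T}\wtd R$ and $F^{\T}\wtd S\wtd S^{\T}F$ (up to zero padding from dimension mismatch), so the singular values of $N$ are precisely those of $\wtd S^{\T}F$ together with those of $E^{\T}\wtd R$. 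Hence $\|N\|_2=\max\{\|\wtd S^{\T}F\|_2,\|E^{\T}\wtd R\|_2\}$, and submultiplicativity combined with \eqref{eq:nrmSR} and \eqref{eq:nrmEF} bounds the two entries by $\sqrt{\|P\|_2\,\epsilon_2}$ and $\sqrt{\|Q\|_2\,\epsilon_1}$ respectively. The second summand contributes $\|E^{\T}F\|_2\le\|E\|_2\|F\|_2\le\sqrt{\epsilon_1\epsilon_2}$. The triangle inequality then assembles the two pieces into the stated $\varepsilon$.

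I do not anticipate any real obstacle; the one point worth flagging is the recognition that the anti-diagonal block structure of $N$ permits taking a maximum rather than a sum of the two off-diagonal contributions. A naive splitting of $\wtd G-G$ as a sum of two rank-one products in $S$ and $R$ would instead give $\sqrt{\|P\|_2\,\epsilon_2}+\sqrt{\|Q\|_2\,\epsilon_1}$, which is up to a factor of two looser than the bound claimed in the lemma.
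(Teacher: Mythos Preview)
Your proposal is correct and follows essentially the same route as the paper: the identities $\|X\|_2^2=\|XX^{\T}\|_2$ for \eqref{eq:nrmSR}--\eqref{eq:nrmEF}, and for \eqref{eq:nrm(GtG)} the same splitting into an anti-diagonal block plus the $E^{\T}F$ corner, followed by the triangle inequality. Your explicit justification via $N^{\T}N$ being block diagonal is a nice addition that the paper leaves implicit when it simply writes the equality with the max.
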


\begin{proof}
We have $\|\wtd S\|_2^2=\|\wtd S\wtd S^{\T}\|_2=\|\wtd P\|_2\le\|P\|_2=\|SS^{\T}\|_2=\|S\|_2^2$, proving the first
relation in \eqref{eq:nrmSR}.
It follows from $P-\wtd P=EE^{\T}$ that
$\|P-\wtd P\|_2=\|E\|_2^2$,
yielding the first inequality in \eqref{eq:nrmEF} upon using \eqref{eq:approxPQ}.
Similarly, we will have the second relation in \eqref{eq:nrmSR} and the second inequality in \eqref{eq:nrmEF}.


For \eqref{eq:nrm(GtG)}, we have
\begin{align*}
\left\|\begin{bmatrix}
                   0 &   \wtd S^{\T}F \\
                   E^{\T}\wtd R  & E^{\T}F
                 \end{bmatrix}\right\|_2
&\le \left\|\begin{bmatrix}
                   0 &   \wtd S^{\T}F \\
                   E^{\T}\wtd R  & 0
                 \end{bmatrix}\right\|_2+\left\|\begin{bmatrix}
                   0 &   0 \\
                   0  & E^{\T}F
                 \end{bmatrix}\right\|_2 \\
&=\max\left\{\|\wtd S^{\T}F\|_2,\|E^{\T}\wtd R\|_2\right\}+\|E^{\T}F\|_2 \\
&\le\max\left\{\sqrt{\|P\|_2\epsilon_2},\sqrt{\|Q\|_2\epsilon_1}\right\}+\sqrt{\epsilon_1\epsilon_2},
\end{align*}
as was to be shown.
\end{proof}

\begin{remark}\label{rk:nrm2use}
{\rm
Besides the spectral norm $\|\cdot\|_2$, the Frobenius
norm is another commonly used matrix norm, too. Naturally, we are wondering if we could have Frobenius-norm versions of
\eqref{eq:approxPQ-2} and \Cref{lem:bndEF}. Theoretically, it can be done, but there is one potential problem which is
that matrix dimension $n$ will show up. Here is why:
$$
\|E\|_{\F}^2\le{\sqrt{\rank(E)}}\,\|EE^{\T}\|_{\F}=\sqrt{\rank(E)}\,\|P-\wtd  P\|_{\F},
$$
and this inequality becomes an equality if all singular values of $E$ are the same. Although $\rank(E)\le n$ always, potentially
$\rank(E)=n$, bringing $n$ into the estimates here and forward. That can be an unfavorable thing to have for huge $n$.
}
\end{remark}

\subsection{Associated SVDs}
Let the SVD of $G$ in \eqref{eq:GtG-1:BT} be
\begin{subequations}\label{eq:StR-SVD:use4BT}
\begin{equation} \label{eq:StR-SVD:use4BT-1}
G= 
U\Sigma V^{\T}\equiv
\kbordermatrix{ &\sss r &\sss m_1-r\\
                            & U_1 &  U_2 }\times
             \kbordermatrix{ &\sss r &\sss m_2-r\\
                           \sss r & \Sigma_1 & \\
                           \sss m_1-r & & \Sigma_2 } \times
             \kbordermatrix{ &\\
                           \sss r & V_1^{\T} \\
                           \sss m_2-r & V_2^{\T} },
\end{equation}
where
\begin{gather}
\Sigma_1=\diag(\sigma_1,\ldots,\sigma_r), \quad\Sigma_2=\begin{bmatrix}
                                                                 \diag(\sigma_{r+1},\ldots,\sigma_{m_2}) \\
                                                                 0_{(m_1-m_2)\times (m_2-r)}
                                                               \end{bmatrix},
       \label{eq:StR-SVD:use4BT-2} \\
\sigma_1\ge\sigma_2\ge\cdots\ge\sigma_{m_2}. \label{eq:StR-SVD:use4BT-3}
\end{gather}
\end{subequations}
Despite of its large size, $G$ still has only $n$ nonzero singular values, namely $\{\sigma_i\}_{i=1}^n$, which are the Hankel singular values
of the system, and the rest of its singular values $\sigma_i=0$ for $i=n+1,\ldots, m_2$.

\begin{lemma}\label{lem:HankelSVD:approxBT}
Suppose that \eqref{eq:approxPQ-1} holds, and let the singular values of $\wtd G$ be
$$
\wtd\sigma_1\ge\wtd\sigma_2\ge\cdots\ge\wtd\sigma_{m_2}.
$$
Then $\wtd\sigma_i\le\sigma_i$ for $i=1,2,\ldots,m_2$. As a corollary, $\|\wtd G\|_2=\wtd\sigma_1\le\sigma_1$.
\end{lemma}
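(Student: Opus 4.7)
The plan is to exploit the block structure in \eqref{eq:GtG:BT}: since $S=[\wtd S,E]$ and $R=[\wtd R,F]$, the block $\wtd S^{\T}\wtd R$ is precisely the top-left $\wtd r_1\times\wtd r_2$ submatrix of $G=S^{\T}R$, while $\wtd G$ is nothing more than this submatrix embedded as the top-left block of an otherwise zero $m_1\times m_2$ matrix. Consequently, the nonzero singular values of $\wtd G$ coincide with those of $\wtd S^{\T}\wtd R$, and $\wtd\sigma_i=0$ for every $i>\wtd r_2$.

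The main step is then to apply the standard interlacing inequality for singular values of a submatrix: if $M$ is a submatrix of a larger matrix $N$, then $\sigma_i(M)\le\sigma_i(N)$ for each $i\le\min\{\text{rows}(M),\text{cols}(M)\}$. To make this concrete I would write $\wtd S^{\T}\wtd R = P\,G\,Q^{\T}$, where $P=[I_{\wtd r_1},0]\in\bbR^{\wtd r_1\times m_1}$ and $Q=[I_{\wtd r_2},0]\in\bbR^{\wtd r_2\times m_2}$ have orthonormal rows, and invoke the multiplicative inequality $\sigma_i(PXQ^{\T})\le\|P\|_2\,\sigma_i(X)\,\|Q^{\T}\|_2=\sigma_i(X)$. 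Alternatively, the same conclusion drops out directly from the Courant--Fischer minimax characterization by lifting any $i$-dimensional subspace from the smaller domain to the larger one via $Q^{\T}$ (an isometric embedding) and using that $P$ is a coordinate projection.

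Combining these two observations, for $1\le i\le\wtd r_2$ we obtain $\wtd\sigma_i=\sigma_i(\wtd S^{\T}\wtd R)\le\sigma_i(G)=\sigma_i$, and for $\wtd r_2<i\le m_2$ we have $\wtd\sigma_i=0\le\sigma_i$, which together give the claimed inequality for all $i=1,\ldots,m_2$. The corollary $\|\wtd G\|_2=\wtd\sigma_1\le\sigma_1$ is then immediate.

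There is no serious obstacle here; the argument is essentially structural. The only thing to be slightly careful about is the accounting of dimensions between $\wtd S^{\T}\wtd R$ and the padded matrix $\wtd G$ so that the ``extra'' trailing singular values of $\wtd G$ are identified as zero, after which the submatrix interlacing inequality finishes the proof.
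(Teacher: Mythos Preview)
Your argument is correct, and it takes a genuinely different route from the paper's. The paper does not use the submatrix interlacing inequality at all; instead it works directly with the Loewner orderings $\wtd Q\preceq Q$ and $\wtd P\preceq P$ together with the cyclic invariance of nonzero eigenvalues. Concretely, it observes that the $\wtd\sigma_i^2$ are the eigenvalues of $\wtd S^{\T}\wtd R\wtd R^{\T}\wtd S=\wtd S^{\T}\wtd Q\wtd S\preceq\wtd S^{\T}Q\wtd S$, then uses $\eig_{\ne 0}(\wtd S^{\T}Q\wtd S)=\eig_{\ne 0}(Q\wtd P)=\eig_{\ne 0}(R^{\T}\wtd P R)$, and finally $R^{\T}\wtd P R\preceq R^{\T}PR$, whose nonzero eigenvalues are the $\sigma_i^2$.

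Your approach is shorter and more elementary: once one notices from \eqref{eq:GtG-1:BT} that $\wtd S^{\T}\wtd R$ is literally the leading $\wtd r_1\times\wtd r_2$ block of $G$, a single application of the submatrix singular-value inequality finishes the job. The paper's argument, on the other hand, never appeals to the specific block form $S=[\wtd S,E]$, $R=[\wtd R,F]$; it would go through verbatim for \emph{any} factorizations $P=SS^{\T}$, $Q=RR^{\T}$, relying only on $\wtd P\preceq P$ and $\wtd Q\preceq Q$. So your proof is cleaner in the present setup, while the paper's is slightly more robust to the choice of factors.
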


\begin{proof}
The nonzero singular values of $\wtd G$ are given by those of $\wtd S^{\T}\wtd R$. It suffices to
show $\wtd\sigma_i^2\le\sigma_i^2$ for $i=1,2,\ldots,\min\{\wtd r_1,\wtd r_2\}$.
Note $\wtd\sigma_i^2$ for $i=1,2,\ldots,m_1$ are the eigenvalues of
$$
\wtd G\wtd G^{\T}
    =\wtd S^{\T}\wtd R\wtd R^{\T}\wtd S=\wtd S^{\T}\wtd Q\wtd S
    \preceq\wtd S^{\T}Q\wtd S,
$$
whose nonzero eigenvalues are the same as those of
$$
Q\wtd S\wtd S^{\T}=Q\wtd P=RR^{\T}\wtd P,
$$
whose nonzero eigenvalues are the same as those of
$$
R^{\T}\wtd PR\preceq R^{\T} PR,
$$
whose nonzero eigenvalues are $\sigma_i^2$ for $i=1,2,\ldots, n$.
\end{proof}

Partition
$$
U^{\T}(\wtd G-G)V=
-U^{\T}\begin{bmatrix}
     0 & \wtd S^{\T}F \\
     E^{\T}\wtd R & E^{\T}F
   \end{bmatrix}V=\kbordermatrix{ &\sss r &\sss m_2-r \\
               \sss r & E_{11} & E_{12} \\
               \sss m_1-r & E_{21} & E_{22} }.
$$
By \Cref{lem:bndEF}, we find
\begin{equation}\label{eq:bd4Eij}
\|E_{ij}\|_2\le\|\wtd G-G\|_2\le\varepsilon\,\,\mbox{for $i,j\in\{1,2\}$},
\end{equation}
where $\varepsilon$ is defined in \eqref{eq:nrm(GtG)}.
Now we will apply
\cite[Theorem~3.1]{litz:2024} to $G,\,\wtd G$ to yield an almost SVD decomposition of $\wtd G$:
\begin{equation}\label{eq:SVD4G:approxBT}
\wtd G
  =\kbordermatrix{ &\sss r &\sss m_1-r \\
                  & \check U_1 & \check U_2}
   \times
   \kbordermatrix{ &\sss r &\sss m_2-r \\
              \sss r & \check\Sigma_1 & 0 \\
              \sss m_1-r &  0 & \check\Sigma_2}
   \times
   \kbordermatrix{ & \\
                \sss r  &\check V_1^{\T} \\
                \sss m_2-r &\check V_2^{\T} },
\end{equation}
where
\begin{subequations}\label{eq:chkUV:BT}
\begin{align}
\check U&\equiv\kbordermatrix{ &\sss r &\sss m_1-r \\
                  & \check U_1 & \check U_2}
        =[U_1,U_2]\begin{bmatrix}
                    I_r & \Gamma^{\T} \\
                    -\Gamma & I_{m_1-r}
                  \end{bmatrix}
                  \begin{bmatrix}
                    (I+\Gamma^{\T}\Gamma)^{-1/2} & 0 \\
                    0 & (I+\Gamma\Gamma^{\T})^{-1/2}
                  \end{bmatrix}, \label{eq:chkUV-1:BT}\\
\check V&\equiv\kbordermatrix{ &\sss r &\sss m_2-r \\
                  & \check V_1 & \check V_2}
        =[V_1,V_2]\begin{bmatrix}
                    I_r & -\Omega^{\T} \\
                    \Omega & I_{m_2-r}
                  \end{bmatrix}
                  \begin{bmatrix}
                    (I+\Omega^{\T}\Omega)^{-1/2} & 0 \\
                    0 & (I+\Omega\Omega^{\T})^{-1/2}
                  \end{bmatrix} \label{eq:chkUV-2:BT}
\end{align}
\end{subequations}
are two orthogonal matrices, $\Omega\in\bbR^{(m_2-r)\times r}$ and $\Gamma\in\bbR^{(m_1-r)\times r}$.

\begin{theorem}\label{thm:SVD4tG:approxBT}
Let $\varepsilon$ be as in \eqref{eq:nrm(GtG)}, and let
$$
\delta=\sigma_r-\sigma_{r+1},\,\,
\munderbar\delta=\delta-2\varepsilon,\,\,
\munderbar\sigma_r=\sigma_r-\varepsilon.
$$
If
$$
\munderbar\delta=\delta-2\varepsilon>0
\quad\mbox{and}\quad
\frac {\varepsilon^2}{\munderbar\delta^2}<\frac 14,
$$
then the following statements hold:
\begin{enumerate}[{\rm (a)}]
  \item there exist $\Omega\in\bbR^{(m_2-r)\times r}$ and $\Gamma\in\bbR^{(m_1-r)\times r}$ satisfying
        \begin{equation}\label{eq:bd4DeltaGamma}
        \max\{\|\Omega\|_2,\|\Gamma\|_2\}\le \frac {2\varepsilon}{\munderbar\delta}
        \end{equation}
        such that $\wtd G$ admits decomposition \eqref{eq:SVD4G:approxBT} with \eqref{eq:chkUV:BT};
  \item the singular values of $\wtd G$ is the multiset union of
        \begin{subequations}\label{eq:checkSigma1:approxBT}
        \begin{align}
        \check\Sigma_1&=\check U_1^{\T}\wtd G\check V_1 \nonumber \\
          &=(I+\Gamma^{\T}\Gamma)^{1/2}(\Sigma_1+E_{11} +  E_{12}\Omega)(I+\Omega^{\T}\Omega)^{-1/2}
                 \label{eq:checkSigma1-1:approxBT}\\
          &=(I+\Gamma^{\T}\Gamma)^{-1/2}(\Sigma_1+E_{11} + \Gamma^{\T} E_{21})(I+\Omega^{\T}\Omega)^{1/2},
                 \label{eq:checkSigma1-2:approxBT}
        \end{align}
        \end{subequations}
        and
        \begin{subequations}\label{eq:checkSigma2:approxBT}
        \begin{align}
        \check\Sigma_2&=\check U_2^{\T}\wtd G\check V_2 \nonumber \\
          &=(I+\Gamma\Gamma^{\T})^{1/2}(\Sigma_2+E_{22} -  E_{21}\Omega^{\T})(I+\Omega\Omega^{\T})^{-1/2}
                \label{eq:checkSigma2-1:approxBT}\\
          &=(I+\Gamma\Gamma^{\T})^{-1/2}(\Sigma_2+E_{22} - \Gamma E_{12})(I+\Omega\Omega^{\T})^{1/2};
                \label{eq:checkSigma2-2:approxBT}
        \end{align}
        \end{subequations}
  \item we have
        \begin{equation}\label{eq:checkSigma1Sigma2:approxBT}
        \sigma_{\min}(\check\Sigma_1)\ge\sigma_r-\varepsilon-\frac {2\varepsilon^2}{\munderbar\delta}, \quad
        \sigma_{\max}(\check\Sigma_2)\le\sigma_{r+1}+\varepsilon+\frac {2\varepsilon^2}{\munderbar\delta},
        \end{equation}
        where $\sigma_{\min}(\check\Sigma_1)$ and $\sigma_{\max}(\check\Sigma_2)$
        are the smallest singular value of $\check\Sigma_1$ and the largest singular value of $\check\Sigma_2$,
        respectively;
  \item if also $\varepsilon/\munderbar\delta<1/2$, then the top $r$ singular values of $\wtd G$
        are exactly the $r$ singular values of $\check\Sigma_1$,
        \begin{equation}\label{eq:checkSigma1-inproved:approxBT}
        \sigma_{\min}(\check\Sigma_1)\ge\sigma_r-\varepsilon=\munderbar\sigma_r,
        \end{equation}
        and the dominant left and right singular subspaces are spanned by the columns of
        \begin{subequations}\nonumber
        \begin{align}
        \check U_1 &=(U_1-U_2\Gamma)(I+\Gamma^{\T}\Gamma)^{-1/2}, \label{eq:checkU1V1-1:approxBT}\\
        \check V_1 &=(V_1+V_2\Omega)(I+\Omega^{\T}\Omega)^{-1/2}, \label{eq:checkU1V1-2:approxBT}
        \end{align}
        \end{subequations}
        respectively. In particular,
        \begin{subequations}\label{eq:diff(U1V1):approxBT}
        \begin{align}
        \|\check U_1-U_1\|_2
           &=\frac {\sqrt 2\,\|\Gamma\|_2}{\big[\sqrt{1+\|\Gamma\|_2^2}(\sqrt{1+\|\Gamma\|_2^2}+1)\big]^{1/2}}
                    \le\|\Gamma\|_2\le\frac {2\varepsilon}{\munderbar\delta}, \label{eq:diff(U1V1)-1:approxBT}\\
        \|\check V_1-V_1\|_2
           &=\frac {\sqrt 2\,\|\Omega\|_2}{\big[\sqrt{1+\|\Omega\|_2^2}(\sqrt{1+\|\Omega\|_2^2}+1)\big]^{1/2}}
                    \le\|\Omega\|_2\le\frac {2\varepsilon}{\munderbar\delta}, \label{eq:diff(U1V1)-2:approxBT}
        \end{align}
        \end{subequations}
        and\footnote {
             It is tempting to wonder if $\|\check \Sigma_1-\Sigma_1\|_2\le\varepsilon$, considering
             the standard perturbation result of singular values \cite[p.204]{stsu:1990}, \cite[p.21-7]{li:2014HLA}.
             Unfortunately, $\check \Sigma_1$ is unlikely diagonal.
             Another set of two inequalities for the same purpose as \eqref{eq:diff(Sigma1):approxBT}
             can be obtained as outlined in Remark~\ref{rk:diff(Sigma1):approxBT}.
             }
        \begin{subequations}\label{eq:diff(Sigma1):approxBT}
        \begin{align}
        \|\check \Sigma_1-\Sigma_1\|_2&\le \left(1+\frac {4\sigma_1}{\munderbar\delta}\right)\varepsilon, \label{eq:diff(Sigma1)-1:approxBT}\\
        \|\check \Sigma_1^{-1}-\Sigma_1^{-1}\|_2&\le\frac 1{\sigma_r\munderbar\sigma_r}
                     \left(1+\frac {4\sigma_1}{\munderbar\delta}\right){\varepsilon}. \label{eq:diff(Sigma1)-2:approxBT}
        \end{align}
        \end{subequations}
\end{enumerate}
\end{theorem}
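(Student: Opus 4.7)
The plan is to obtain all four items from one application of the generic SVD perturbation result \cite[Theorem~3.1]{litz:2024} to the pair $(G,\wtd G)$, followed by routine block-matrix algebra and careful bookkeeping. The input to that theorem consists of the fixed SVD~\eqref{eq:StR-SVD:use4BT} of $G$, the block structure $U^{\T}(\wtd G-G)V=[E_{ij}]_{i,j=1}^{2}$ with $\|E_{ij}\|_2\le\varepsilon$ from~\eqref{eq:bd4Eij}, and the spectral gap $\delta=\sigma_r-\sigma_{r+1}$. Under $\munderbar\delta>0$ and $\varepsilon^2/\munderbar\delta^2<1/4$, the theorem yields $\Omega\in\bbR^{(m_2-r)\times r}$ and $\Gamma\in\bbR^{(m_1-r)\times r}$ that solve the coupled nonlinear Sylvester equations equivalent to $\check U_2^{\T}\wtd G\check V_1=0$ and $\check U_1^{\T}\wtd G\check V_2=0$, together with the quadratic-contraction bound $\max\{\|\Omega\|_2,\|\Gamma\|_2\}\le 2\varepsilon/\munderbar\delta$. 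Direct verification shows that $\check U,\check V$ in~\eqref{eq:chkUV:BT} are orthogonal, which proves (a) and produces the decomposition~\eqref{eq:SVD4G:approxBT}.

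For (b), the plan is to expand $\check U_1^{\T}\wtd G\check V_1$ by substituting the explicit forms of $\check U_1,\check V_1$ from~\eqref{eq:chkUV:BT} into $\wtd G=G+(\wtd G-G)$ and using $U^{\T}GV=\Sigma$ together with the block form of $U^{\T}(\wtd G-G)V$. The raw output is
\[
\check\Sigma_1=(I+\Gamma^{\T}\Gamma)^{-1/2}\bigl[\Sigma_1-\Gamma^{\T}\Sigma_2\Omega+E_{11}+E_{12}\Omega-\Gamma^{\T}E_{21}-\Gamma^{\T}E_{22}\Omega\bigr](I+\Omega^{\T}\Omega)^{-1/2}.
\]
Substituting the Sylvester identity $\Sigma_2\Omega=-\Gamma\Sigma_1-\Gamma E_{11}-\Gamma E_{12}\Omega-E_{21}-E_{22}\Omega$ (from $\check U_2^{\T}\wtd G\check V_1=0$) to eliminate $\Gamma^{\T}\Sigma_2\Omega$ collapses the bracket to $(I+\Gamma^{\T}\Gamma)(\Sigma_1+E_{11}+E_{12}\Omega)$, yielding~\eqref{eq:checkSigma1-1:approxBT}. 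The companion~\eqref{eq:checkSigma1-2:approxBT} comes from instead using the dual Sylvester identity (from $\check U_1^{\T}\wtd G\check V_2=0$) to eliminate $E_{12}\Omega$. The calculation of $\check\Sigma_2$ in~\eqref{eq:checkSigma2:approxBT} is symmetric.

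For (c), I would apply Weyl's inequality to obtain $\sigma_{\min}(\Sigma_1+E_{11}+E_{12}\Omega)\ge\sigma_r-\varepsilon-2\varepsilon^2/\munderbar\delta$, then absorb the outer $(I+\Gamma^{\T}\Gamma)^{\pm 1/2}$ and $(I+\Omega^{\T}\Omega)^{\mp 1/2}$ factors by coordinating the two equivalent forms in~\eqref{eq:checkSigma1:approxBT}; the analogous argument via~\eqref{eq:checkSigma2:approxBT} yields the bound on $\sigma_{\max}(\check\Sigma_2)$. For (d), the added hypothesis $\varepsilon/\munderbar\delta<1/2$ sharpens (c) to the strict inequality $\sigma_{\min}(\check\Sigma_1)>\sigma_{\max}(\check\Sigma_2)$, forcing the $r$ singular values of $\check\Sigma_1$ to be exactly the top $r$ of $\wtd G$; Mirsky's inequality applied to $\wtd G$ versus $G$ then upgrades this to $\sigma_{\min}(\check\Sigma_1)=\sigma_r(\wtd G)\ge\sigma_r-\varepsilon=\munderbar\sigma_r$. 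The identities in~\eqref{eq:diff(U1V1):approxBT} reduce to the elementary formula $\|(I+H)^{-1/2}-I\|_2=\|H\|_2/[\sqrt{1+\|H\|_2}(\sqrt{1+\|H\|_2}+1)]$ applied with $H=\Gamma^{\T}\Gamma$ or $H=\Omega^{\T}\Omega$, after writing $\check U_1-U_1=U_1[(I+\Gamma^{\T}\Gamma)^{-1/2}-I]-U_2\Gamma(I+\Gamma^{\T}\Gamma)^{-1/2}$ and using orthonormality of $[U_1,U_2]$. Finally,~\eqref{eq:diff(Sigma1)-1:approxBT} follows by splitting $\check\Sigma_1-\Sigma_1=[A\Sigma_1B-\Sigma_1]+A(E_{11}+E_{12}\Omega)B$ with $A=(I+\Gamma^{\T}\Gamma)^{1/2}$, $B=(I+\Omega^{\T}\Omega)^{-1/2}$, and bounding each piece using $\|A-I\|_2,\|I-B\|_2\le 2\varepsilon^2/\munderbar\delta^2$; the inverse estimate~\eqref{eq:diff(Sigma1)-2:approxBT} then follows from the resolvent identity $\check\Sigma_1^{-1}-\Sigma_1^{-1}=-\check\Sigma_1^{-1}(\check\Sigma_1-\Sigma_1)\Sigma_1^{-1}$ together with $\sigma_{\min}(\check\Sigma_1)\ge\munderbar\sigma_r$. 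The main obstacle throughout is extracting the tight constants in (c) and (d) without incurring the parasitic factor $1/\sqrt{1+\|\Omega\|_2^2}$ that a naive $\sigma_{\min}(XYZ)\ge\sigma_{\min}(X)\sigma_{\min}(Y)\sigma_{\min}(Z)$ bound on a single form of~\eqref{eq:checkSigma1:approxBT} would produce.
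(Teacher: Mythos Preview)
Your handling of items (a)--(c) and the first assertions of (d) is correct and matches the paper: everything up through \eqref{eq:checkSigma1-inproved:approxBT} is obtained by citing \cite[Theorem~3.1]{litz:2024} with $\varepsilon_{ij}=\varepsilon$, and your sketch of the block-algebra behind (b) and (c) is accurate (the paper simply cites the external theorem without reproducing those details).

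There are two issues in your treatment of the remaining estimates in (d). First, a minor one: for \eqref{eq:diff(U1V1):approxBT} you correctly write $\check U_1-U_1=U_1[(I+\Gamma^{\T}\Gamma)^{-1/2}-I]-U_2\Gamma(I+\Gamma^{\T}\Gamma)^{-1/2}$, but the norm of this is the norm of the stacked $2\times 1$ block matrix, not merely $\|(I+H)^{-1/2}-I\|_2$. The paper computes the singular values of that stacked matrix explicitly via the SVD $\Gamma=Z\Xi W^{\T}$, obtaining for each singular value $\gamma$ of $\Gamma$ the quantity $\sqrt{2(1-1/\sqrt{1+\gamma^2})}$, which is how the exact expression in \eqref{eq:diff(U1V1)-1:approxBT} arises.

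Second, and more substantively: your route to \eqref{eq:diff(Sigma1)-1:approxBT} via the splitting $\check\Sigma_1-\Sigma_1=[A\Sigma_1B-\Sigma_1]+A(E_{11}+E_{12}\Omega)B$ does \emph{not} produce the stated constant $(1+4\sigma_1/\munderbar\delta)\varepsilon$. That splitting is exactly the alternative computation carried out in Remark~\ref{rk:diff(Sigma1):approxBT}, and it yields the different bound~\eqref{eq:diff(Sigma1):approxBT'}, whose $\sigma_1$-dependent term is $O(\sigma_1\varepsilon^2/\munderbar\delta^2)$ rather than $O(\sigma_1\varepsilon/\munderbar\delta)$. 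The paper's proof of \eqref{eq:diff(Sigma1)-1:approxBT} instead uses the telescoping identity
\[
\check\Sigma_1-\Sigma_1=(\check U_1-U_1)^{\T}\wtd G\check V_1+U_1^{\T}(\wtd G-G)\check V_1+U_1^{\T}G(\check V_1-V_1),
\]
bounding each term via \eqref{eq:nrm(GtG)}, \eqref{eq:diff(U1V1):approxBT}, and $\|\wtd G\|_2\le\sigma_1$ from Lemma~\ref{lem:HankelSVD:approxBT}. This gives $\|\check\Sigma_1-\Sigma_1\|_2\le(2\varepsilon/\munderbar\delta)\sigma_1+\varepsilon+\sigma_1(2\varepsilon/\munderbar\delta)=(1+4\sigma_1/\munderbar\delta)\varepsilon$ directly. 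Your resolvent argument for \eqref{eq:diff(Sigma1)-2:approxBT} is then correct once \eqref{eq:diff(Sigma1)-1:approxBT} is in hand.
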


\begin{proof}
Recall \eqref{eq:bd4Eij}.
Apply
\cite[Theorem~3.1]{litz:2024}
to $G,\,\wtd G$  with $\varepsilon_{ij}=\varepsilon$, $\delta$ and $\munderbar\delta$ here
to yield all conclusions of the theorem, except \eqref{eq:diff(U1V1):approxBT} and \eqref{eq:diff(Sigma1):approxBT}, which we will now prove.

To prove \eqref{eq:diff(U1V1)-1:approxBT}, we have
\begin{align*}
\check U_1-U_1&=U_1\big[(I+\Gamma^{\T}\Gamma)^{-1/2}-I\big]-U_2\Gamma(I+\Gamma^{\T}\Gamma)^{-1/2} \\
   &=-[U_1,U_2]\begin{bmatrix}
              I-(I+\Gamma^{\T}\Gamma)^{-1/2} \\
              \Gamma(I+\Gamma^{\T}\Gamma)^{-1/2}
            \end{bmatrix}.
\end{align*}
Let $\Gamma=Z\Xi W^{\T}$ be the SVD of $\Gamma$. We find
$$
\begin{bmatrix}
              I-(I+\Gamma^{\T}\Gamma)^{-1/2} \\
              \Gamma(I+\Gamma^{\T}\Gamma)^{-1/2}
            \end{bmatrix}=\begin{bmatrix}
                W &  \\
                 & Z
              \end{bmatrix}
            \begin{bmatrix}
              I-(I+\Xi^{\T}\Xi)^{-1/2} \\
              \Xi(I+\Xi^{\T}\Xi)^{-1/2}
            \end{bmatrix} W^{\T},
$$
where for the middle matrix on the right, $I-(I+\Xi^{\T}\Xi)^{-1/2}$ is diagonal and
$\Xi(I+\Xi^{\T}\Xi)^{-1/2}$ is leading diagonal. Hence the  singular values of the middle matrix are given by: for each singular value $\gamma$ of $\Gamma$,
\begin{align*}
\sqrt{\left(1-\frac 1{\sqrt{1+\gamma^2}}\right)^2+\left(\frac {\gamma}{\sqrt{1+\gamma^2}}\right)^2}
  &=\sqrt{2\left(1-\frac 1{\sqrt{1+\gamma^2}}\right)} \\
  &=\frac {\sqrt 2\,\gamma}{\big[\sqrt{1+\gamma^2}(\sqrt{1+\gamma^2}+1)\big]^{1/2}} \\
  &\le\gamma\le\|\Gamma\|_2.
\end{align*}
Therefore, we get
$$
\|\check U_1-U_1\|_2
            =\left\| \begin{bmatrix}
              I-(I+\Xi^{\T}\Xi)^{-1/2} \\
              \Xi(I+\Xi^{\T}\Xi)^{-1/2}
            \end{bmatrix}\right\|_2
            =\frac {\sqrt 2\,\|\Gamma\|_2}{\big[\sqrt{1+\|\Gamma\|_2^2}(\sqrt{1+\|\Gamma\|_2^2}+1)\big]^{1/2}}
            \le\|\Gamma\|_2,
$$
yielding \eqref{eq:diff(U1V1)-1:approxBT} in light of \eqref{eq:bd4DeltaGamma}.
Similarly, we have \eqref{eq:diff(U1V1)-2:approxBT}.

Finally, we prove \eqref{eq:diff(Sigma1):approxBT}. We have
\begin{align}
\check \Sigma_1-\Sigma_1
  &=\check U_1^{\T}\wtd G\check V_1- U_1^{\T}\wtd G\check V_1+U_1^{\T}\wtd G\check V_1
    -U_1^{\T} G\check V_1+U_1^{\T} G\check V_1-U_1^{\T} G V_1 \nonumber\\
  &=(\check U_1-U_1)^{\T}\wtd G\check V_1+U_1^{\T}(\wtd G-G)\check V_1+U_1^{\T} G(\check V_1-V_1).
        \label{eq:diff(Sigma1):approxBT:pf-1}
\end{align}
In light of \eqref{eq:nrm(GtG)} and \eqref{eq:diff(U1V1):approxBT}, we get
\begin{align*}
\|\check \Sigma_1-\Sigma_1\|_2
  &\le\|\check U_1-U_1\|_2\|\wtd G\|_2+\|\wtd G-G\|_2+\|G\|_2\|\check V_1-V_1\| \\
  &\le \left(1+\frac {4\sigma_1}{\munderbar\delta}\right)\varepsilon,  \\
\intertext{and}
\|\check \Sigma_1^{-1}-\Sigma_1^{-1}\|_2
  &=\|\check \Sigma_1^{-1}(\Sigma_1-\check\Sigma_1)\Sigma_1^{-1}\|_2 \\
  &\le\|\check \Sigma_1^{-1}\|_2\|\Sigma_1-\check\Sigma_1\|_2\|\Sigma_1^{-1}\|_2 \\
  &\le\frac 1{\sigma_r\munderbar\sigma_r}\left(1+\frac {4\sigma_1}{\munderbar\delta}\right)\varepsilon,
\end{align*}
completing the proof of \eqref{eq:diff(Sigma1):approxBT}.
%
\end{proof}

\begin{remark}\label{rk:diff(Sigma1):approxBT}
Another upper bound on $\|\check \Sigma_1-\Sigma_1\|_2$ can be obtained as follows.
Alternatively to \eqref{eq:diff(Sigma1):approxBT:pf-1}, we have
\begin{align*}
\check \Sigma_1-\Sigma_1
  &=(I+\Gamma^{\T}\Gamma)^{1/2}(\Sigma_1+E_{11} +  E_{12}\Omega)(I+\Omega^{\T}\Omega)^{-1/2}-\Sigma_1 \\
  &=(I+\Gamma^{\T}\Gamma)^{1/2}\Sigma_1(I+\Omega^{\T}\Omega)^{-1/2}-\Sigma_1 \\
  &\quad  +(I+\Gamma^{\T}\Gamma)^{1/2}(E_{11} +  E_{12}\Omega)(I+\Omega^{\T}\Omega)^{-1/2} \\
  &=(I+\Gamma^{\T}\Gamma)^{1/2}\Sigma_1(I+\Omega^{\T}\Omega)^{-1/2}-\Sigma_1(I+\Omega^{\T}\Omega)^{-1/2}
        +\Sigma_1(I+\Omega^{\T}\Omega)^{-1/2}-\Sigma_1 \\
  &\quad  +(I+\Gamma^{\T}\Gamma)^{1/2}(E_{11} +  E_{12}\Omega)(I+\Omega^{\T}\Omega)^{-1/2} \\
  &=\Big[(I+\Gamma^{\T}\Gamma)^{1/2}-I\big]\Sigma_1(I+\Omega^{\T}\Omega)^{-1/2}
        +\Sigma_1\big[(I+\Omega^{\T}\Omega)^{-1/2}-I\big] \\
  &\quad  +(I+\Gamma^{\T}\Gamma)^{1/2}(E_{11} +  E_{12}\Omega)(I+\Omega^{\T}\Omega)^{-1/2},
\end{align*}
and therefore
\begin{align}
\|\check \Sigma_1-\Sigma_1\|_2
  &\le\Big\|(I+\Gamma^{\T}\Gamma)^{1/2}-I\|_2\sigma_1+\sigma_1\big\|(I+\Omega^{\T}\Omega)^{-1/2}-I\big\|_2\nonumber \\
  &\quad  +\|(I+\Gamma^{\T}\Gamma)^{1/2}\|_2(1 +\|\Omega\|_2)\varepsilon \nonumber\\
  &\le\frac {\|\Gamma\|_2^2}{\sqrt{1+\|\Gamma\|_2^2}+1}\sigma_1
       +\sigma_1\frac {\|\Omega\|_2^2}{\sqrt{1+\|\Omega\|_2^2}(\sqrt{1+\|\Omega\|_2^2}+1)} \nonumber \\
  &\quad +\sqrt{1+\|\Gamma\|_2^2}(1+\|\Omega\|_2)\varepsilon \nonumber \\
  &\le\frac {(2\varepsilon/\munderbar\delta)^2}{\sqrt{1+(2\varepsilon/\munderbar\delta)^2}+1}
         \left(1+\frac 1{\sqrt{1+(2\varepsilon/\munderbar\delta)^2}}\right)\sigma_1
    +\sqrt{1+\left(\frac {2\varepsilon}{\munderbar\delta}\right)^2}\left(1+\frac {2\varepsilon}{\munderbar\delta}\right)\varepsilon \nonumber\\
  &\le\frac 12\left(1+\frac 1{\sqrt 2}\right)\sigma_1\left(\frac {2\varepsilon}{\munderbar\delta}\right)^2
      +2\sqrt 2\,\varepsilon.  \label{eq:diff(Sigma1):approxBT'}
\end{align}
Comparing \eqref{eq:diff(Sigma1)-1:approxBT} with \eqref{eq:diff(Sigma1):approxBT'}, we find that
both contain a term that depends only on $\varepsilon$: $\varepsilon$ in the former whereas $2\sqrt 2\,\varepsilon$ in the latter, and clearly the edge goes to \eqref{eq:diff(Sigma1)-1:approxBT} for the term,
and that both contain a term proportional to $\sigma_1$, and the edge goes to \eqref{eq:diff(Sigma1):approxBT'}
because it is $O(\sigma_1\varepsilon)$ {\em v.s.} $O(\sigma_1\varepsilon^2)$.
In the same way as how \eqref{eq:diff(Sigma1)-2:approxBT} is created,
we can create an upper bound on $\|\check \Sigma_1^{-1}-\Sigma_1^{-1}\|_2$, using
\eqref{eq:diff(Sigma1):approxBT'}, instead. Detail is omitted.
\end{remark}

As we commented on
\cite[Theorem~3.1]{litz:2024},
\eqref{eq:checkSigma1-inproved:approxBT} improves the first inequality
in \eqref{eq:checkSigma1Sigma2:approxBT}, but it relies on the latter to first establish the fact that the top $r$ singular values of $\wtd G$
        are exactly the $r$ singular values of $\check\Sigma_1$.

The decomposition \eqref{eq:SVD4G:approxBT} we built for $\wtd G$ has an SVD look, but it is not an SVD because
$\check\Sigma_i$ for $i=1,2$ are not diagonal. One idea is to perform an SVD on $\check\Sigma_1$ and update $\check U_1$, $\check V_1$
accordingly to get $\wtd U_1$ and $\wtd V_1$ for the dominant left and right singular vector matrices, but
it is hard, if not impossible, to relate the resulting $\wtd U_1$ and $\wtd V_1$ to $U_1$ and $V_1$, and in return,
difficult to relate $\wtd X_1$ and $\wtd Y_1$ defined in in \eqref{eq:X1Y1:approxBT} to $X_1,\,Y_1$ defined in \eqref{eq:X1Y1:BT}. This is precisely the reason behind our previous comment
at the end of \Cref{sec:MOR:review,sec:approxBT} that $X_1,\,Y_1$ defined in \eqref{eq:X1Y1:BT}
and $\wtd X_1$ and $\wtd Y_1$ in \eqref{eq:X1Y1:approxBT} are difficult to use.
 Fortunately these concrete forms for $X_1,\,Y_1$
and $\wtd X_1$ and $\wtd Y_1$ are not essential as far the transfer functions are concerned
because of \Cref{thm:spaces=key}.
On the other hand, it is rather easy to relate $\check U_1$, $\check V_1$,
and $\check\Sigma_1$ there to $U_1$, $V_1$, and $\Sigma_1$, respectively, from the SVD of $G=S^{\T}R$.

In the rest of this paper, we will assume the following setup without explicit mentioning it:
\begin{equation}\label{eq:setup}
\framebox{
\parbox{11.5cm}{
{\bf Setup.}
Approximate Gramians $\wtd P$ and $\wtd Q$ satisfy \eqref{eq:approxPQ} such that the conditions of \Cref{thm:SVD4tG:approxBT}, including $\varepsilon/\omega<1/2$, hold.
True balanced truncation is carried with $X_1,\,Y_1$ in \eqref{eq:use-X1Y1:BT}, while
$$ 
\wtd X_1=[\wtd S,0_{n\times (m_1-\wtd r_1)}]\check U_1, \,\,
\wtd Y_1=[\wtd R,0_{n\times (m_2-\wtd r_2)}]\check V_1\check\Sigma_1^{-1}
$$ 
are used for approximate balanced truncation. Accordingly, $\what A_{11}$, $\what B_1$, and $\what C_1$
in the reduced system \eqref{eq:CDS:Redu:general} from the true balanced truncation
are defined by \eqref{eq:reduced-matrices:BT},
and $\wtd A_{11}$, $\wtd B_1$, and $\wtd C_1$ in the reduced system \eqref{eq:approxBT-CDS}
from approximate balanced truncation
by \eqref{eq:reduced-matrices:approxBT}.
}}
\end{equation}
$X_1,\,Y_1$ in \eqref{eq:use-X1Y1:BT} and $\wtd X_1,\,\wtd Y_1$ just intriduced,
produce different reduced models from the usual reduced models by balanced truncation in the literature, but
keep the associated transfer functions intact, nonetheless.
In particular, $\wtd X_1$ and $\wtd Y_1$ are introduced for our analysis only.
In practice, they cannot be computed because given $\wtd S$ and $\wtd R$, knowledge on what $m_1$ and $m_2$ are is not
available, a {\em priori}.

\subsection{Bounds on differences between reduced systems}\label{ssec:bd4coeffs}
In this subsection we will bound the differences
of the coefficient matrices and transfer functions between the reduced system \eqref{eq:CDS:Redu:general} from the true balanced truncation
and \eqref{eq:approxBT-CDS} from an approximate balanced truncation.

First we will establish bounds on $\|X_1-\wtd X_1\|_2$ and $\|Y_1-\wtd Y_1\|_2$.

\begin{lemma}\label{lm:diff(X1Y1):BTaBT}
We have
\begin{subequations}\label{eq:diff(X1Y1):BTaBT}
\begin{align}
\|\wtd X_1-X_1\|_2
    &\le\sqrt{\epsilon_1}+\sqrt{\|P\|_2}\,\frac {2\varepsilon}{\munderbar\delta}=:\epsilon_x, \label{eq:diff(X1):BTaBT}\\
\|\wtd Y_1-Y_1\|_2
    &\le\frac {\sqrt{\epsilon_2}}{\sigma_r}
        +\frac {\sqrt{\|Q\|_2}}{\sigma_r}\,
        \left(1+\frac {\munderbar\delta}{2\munderbar\sigma_r}+\frac {2\sigma_1}{\munderbar\sigma_r}\right)\,
        \frac {2\varepsilon}{\munderbar\delta}
         =:\epsilon_y. \label{eq:diff(Y1):BTaBT}
\end{align}
\end{subequations}
\end{lemma}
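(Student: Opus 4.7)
The plan is to expand $\wtd X_1-X_1$ and $\wtd Y_1-Y_1$ using the identities $S=[\wtd S,E]$ and $R=[\wtd R,F]$ from \eqref{eq:tStR->SR} together with the formulas $X_1=SU_1$, $Y_1=RV_1\Sigma_1^{-1}$ in \eqref{eq:use-X1Y1:BT} and the definitions of $\wtd X_1,\wtd Y_1$ in the Setup \eqref{eq:setup}, then to invoke the ingredient bounds already available: $\|E\|_2\le\sqrt{\epsilon_1}$, $\|F\|_2\le\sqrt{\epsilon_2}$, $\|\wtd S\|_2\le\sqrt{\|P\|_2}$, $\|\wtd R\|_2\le\sqrt{\|Q\|_2}$ from \Cref{lem:bndEF}, plus $\|\check U_1-U_1\|_2$, $\|\check V_1-V_1\|_2$, and $\|\check\Sigma_1^{-1}-\Sigma_1^{-1}\|_2$ from \eqref{eq:diff(U1V1):approxBT} and \eqref{eq:diff(Sigma1)-2:approxBT}. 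Throughout, $U_1,V_1,\check U_1,\check V_1$ all have orthonormal columns so their $2$-norms equal $1$.

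For \eqref{eq:diff(X1):BTaBT} I would write
$$
\wtd X_1-X_1=[\wtd S,0](\check U_1-U_1)-[0,E]U_1,
$$
since $[\wtd S,E]U_1=[\wtd S,0]U_1+[0,E]U_1$. The two resulting $n\times m_1$ block matrices have $2$-norms $\|\wtd S\|_2$ and $\|E\|_2$ respectively, so the triangle inequality together with \eqref{eq:nrmSR}, \eqref{eq:nrmEF}, and \eqref{eq:diff(U1V1)-1:approxBT} immediately yields $\sqrt{\epsilon_1}+\sqrt{\|P\|_2}\cdot 2\varepsilon/\munderbar\delta$, which is $\epsilon_x$.

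The derivation of \eqref{eq:diff(Y1):BTaBT} is the delicate step, because a naive splitting would produce a $1/\munderbar\sigma_r$ factor in the $\sqrt{\epsilon_2}$ term, whereas the target bound has the tighter $1/\sigma_r$. The trick is to keep the $\Sigma_1^{-1}$ factor attached to the terms that do not already carry a $\check\Sigma_1^{-1}-\Sigma_1^{-1}$ difference. Concretely, I would write
\begin{align*}
\wtd Y_1-Y_1 &= [\wtd R,0]\check V_1\check\Sigma_1^{-1}-RV_1\Sigma_1^{-1} \\
&= \bigl([\wtd R,0]\check V_1-RV_1\bigr)\Sigma_1^{-1}+[\wtd R,0]\check V_1\bigl(\check\Sigma_1^{-1}-\Sigma_1^{-1}\bigr),
\end{align*}
and then further split
$[\wtd R,0]\check V_1-RV_1=[\wtd R,0](\check V_1-V_1)-[0,F]V_1$.
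Taking $2$-norms, the three resulting contributions are bounded respectively by $\|\wtd R\|_2\|\check V_1-V_1\|_2/\sigma_r$, $\|F\|_2/\sigma_r$, and $\|\wtd R\|_2\|\check\Sigma_1^{-1}-\Sigma_1^{-1}\|_2$. Substituting \eqref{eq:nrmSR}--\eqref{eq:nrmEF}, \eqref{eq:diff(U1V1)-2:approxBT}, and \eqref{eq:diff(Sigma1)-2:approxBT}, and collecting $2\varepsilon/(\sigma_r\munderbar\delta)$ out of the $\sqrt{\|Q\|_2}$ terms, produces exactly the stated form with the bracketed factor $1+\munderbar\delta/(2\munderbar\sigma_r)+2\sigma_1/\munderbar\sigma_r$.

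The main obstacle is purely algebraic, namely choosing the add/subtract order above so that the $\sqrt{\epsilon_2}$ contribution picks up $1/\sigma_r$ rather than the weaker $1/\munderbar\sigma_r$; once that decomposition is fixed, the rest is mechanical substitution of previously established inequalities and no new estimate is required.
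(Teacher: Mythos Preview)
Your proposal is correct and follows essentially the same route as the paper: the same three-term telescoping for $\wtd Y_1-Y_1$ (with the $\Sigma_1^{-1}$ factor kept on the $F$- and $(\check V_1-V_1)$-terms so that $\sqrt{\epsilon_2}/\sigma_r$ appears rather than $\sqrt{\epsilon_2}/\munderbar\sigma_r$), and the same ingredient bounds from \Cref{lem:bndEF} and \Cref{thm:SVD4tG:approxBT}. The only cosmetic difference is that for $\wtd X_1-X_1$ you pair $(\check U_1-U_1)$ with $[\wtd S,0]$ while the paper pairs it with $S$; since $\|\wtd S\|_2\le\|S\|_2=\sqrt{\|P\|_2}$, both yield the stated $\epsilon_x$.
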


\begin{proof}
Recall \eqref{eq:tStR->SR}. We have
\begin{align*}
\wtd X_1-X_1 &=[\wtd S,0_{n\times (m_1-\wtd r_1)}]\check U_1-S\check U_1+S\check U_1-S U_1 \\
  &=[0_{n\times \wtd r_1},-E]\check U_1+S(\check U_1-U_1),
\end{align*}
and hence, upon using \eqref{eq:nrmEF} and \eqref{eq:diff(U1V1)-1:approxBT},  and noticing
$\|S\|_2=\sqrt{\|P\|_2}$, we arrive at \eqref{eq:diff(X1):BTaBT}.
For \eqref{eq:diff(Y1):BTaBT}, we have
\begin{align*}
\wtd Y_1-Y_1
&=[\wtd R,0_{n\times (m_2-\wtd r_2)}]\check V_1\check\Sigma_1^{-1}-[\wtd R,0_{n\times (m_2-\wtd r_2)}]\check V_1\Sigma_1^{-1} \\
&\quad  +[\wtd R,0_{n\times (m_2-\wtd r_2)}]\check V_1\Sigma_1^{-1}-[\wtd R,0_{n\times (m_2-\wtd r_2)}] V_1\Sigma_1^{-1} \\
&\quad  +[\wtd R,0_{n\times (m_2-\wtd r_2)}] V_1\Sigma_1^{-1}-R V_1\Sigma_1^{-1} \\
&=[\wtd R,0_{n\times (m_2-\wtd r_2)}]\check V_1\left(\check\Sigma_1^{-1}-\Sigma_1^{-1}\right) \\
&\quad  +[\wtd R,0_{n\times (m_2-\wtd r_2)}]\left(\check V_1-V_1\right)\Sigma_1^{-1}+[0,-F] V_1\Sigma_1^{-1},
\end{align*}
and, therefore, by \Cref{lem:bndEF}, and \eqref{eq:diff(U1V1):approxBT} and \eqref{eq:diff(Sigma1):approxBT}, we get
\begin{align*}
\big\|\wtd Y_1-Y_1\big\|_2
  &\le\big\|\wtd R\big\|_2\big\|\check\Sigma_1^{-1}-\Sigma_1^{-1}\big\|_2
    +\big\|\wtd R\big\|_2\big\|\check V_1-V_1\big\|_2\big\|\Sigma_1^{-1}\big\|_2+\|F\|_2\big\|\Sigma_1^{-1}\big\|_2 \\
  &\le\sqrt{\|Q\|_2}\,\frac 1{\sigma_r\munderbar\sigma_r}\left(1+\frac {4\sigma_1}{\munderbar\delta}\right)\varepsilon
      +\sqrt{\|Q\|_2}\,\frac {2\varepsilon}{\munderbar\delta}\frac 1{\sigma_r}
      +\frac {\sqrt{\epsilon_2}}{\sigma_r},
\end{align*}
yielding \eqref{eq:diff(Y1):BTaBT}.
\end{proof}

The differences between the coefficient matrices of the two reduced systems are bounded in \Cref{thm:diff(ABC):BTaBT}
below, where the use of any unitarily invariant norm does not require additional care for proofs, and yet may be of
independent interest.

\begin{theorem}\label{thm:diff(ABC):BTaBT}
For any unitarily invariant norm $\|\cdot\|_{\UI}$, we have 
\begin{subequations}\label{eq:diff(ABC):BTaBT}
\begin{align}
\frac {\big\|\wtd A_{11}-\what A_{11}\big\|_{\UI}}{\|A\|_{\UI}}
    &\le \sqrt{\|P\|_2}\epsilon_y+\frac {\sqrt{\|Q\|_2}}{\sigma_r}\,\epsilon_x
        =:\epsilon_a  ,
        \label{eq:diff(A):BTaBT}\\
\frac {\big\|\wtd B_1-\what B_1\big\|_{\UI}}{\|B\|_{\UI}}
    &\le\epsilon_y
        =:\epsilon_b,
         \label{eq:diff(B):BTaBT} \\
\frac {\big\|\wtd C_1-\what C_1\big\|_{\UI}}{\|C\|_{\UI}}
     &\le\epsilon_x
         =:\epsilon_c  , \label{eq:diff(C):BTaBT}
\end{align}
\end{subequations}
and
\begin{subequations}\label{eq:diff(BBCC):BTaBT}
\begin{align}
\frac {\big\|\wtd B_1\wtd B_1^{\T}-\what B_1\what B_1^{\T}\big\|_{\UI}}{\|BB^{\T}\|_{\UI}}
    &\le\sqrt{\|Q\|_2}\left(\frac 1{\munderbar\sigma_r}+\frac 1{\sigma_r}\right)\,\epsilon_y
    =:\epsilon_{b2},
         \label{eq:diff(BB):BTaBT} \\
\frac {\big\|\wtd C_1\wtd C_1^{\T}-\what C_1\what C_1^{\T}\big\|_{\UI}}{\|CC^{\T}\|_{\UI}}
    &\le 2\sqrt{\|P\|_2}\,\epsilon_x
         =:\epsilon_{c2}  , \label{eq:diff(CC):BTaBT}
\end{align}
\end{subequations}
\end{theorem}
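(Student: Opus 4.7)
The plan is to reduce everything to the already-established bounds $\|\wtd X_1-X_1\|_2\le\epsilon_x$ and $\|\wtd Y_1-Y_1\|_2\le\epsilon_y$ from \Cref{lm:diff(X1Y1):BTaBT}, combined with easy spectral-norm bounds on $X_1,Y_1,\wtd X_1,\wtd Y_1$ themselves, and then to invoke the inequality $\|XYZ\|_{\UI}\le\|X\|_2\|Y\|_{\UI}\|Z\|_2$ that every unitarily invariant norm satisfies.

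First I would record the simple ``size'' estimates. From $X_1=SU_1$ we get $\|X_1\|_2\le\|S\|_2=\sqrt{\|P\|_2}$, and from $Y_1=RV_1\Sigma_1^{-1}$ we get $\|Y_1\|_2\le\sqrt{\|Q\|_2}/\sigma_r$. For the tilded versions, $\wtd X_1=[\wtd S,0]\check U_1$ gives $\|\wtd X_1\|_2\le\|\wtd S\|_2\le\sqrt{\|P\|_2}$ by \eqref{eq:nrmSR}, and $\wtd Y_1=[\wtd R,0]\check V_1\check\Sigma_1^{-1}$ gives $\|\wtd Y_1\|_2\le\sqrt{\|Q\|_2}/\sigma_{\min}(\check\Sigma_1)\le\sqrt{\|Q\|_2}/\munderbar\sigma_r$ using \eqref{eq:checkSigma1-inproved:approxBT}.

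Next, \eqref{eq:diff(B):BTaBT} and \eqref{eq:diff(C):BTaBT} are immediate: $\wtd B_1-\what B_1=(\wtd Y_1-Y_1)^{\T}B$ and $\wtd C_1-\what C_1=(\wtd X_1-X_1)^{\T}C$, so a single application of the UI-norm inequality above finishes them. For \eqref{eq:diff(A):BTaBT}, I would telescope
\[
\wtd A_{11}-\what A_{11}=(\wtd Y_1-Y_1)^{\T}A\,\wtd X_1+Y_1^{\T}A(\wtd X_1-X_1),
\]
then bound $\|\cdot\|_{\UI}$ of each term by $\|A\|_{\UI}$ times the appropriate product of $\|\wtd Y_1-Y_1\|_2\|\wtd X_1\|_2$ and $\|Y_1\|_2\|\wtd X_1-X_1\|_2$; plugging in the size estimates above yields exactly $\epsilon_a\|A\|_{\UI}$.

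For the quadratic identities \eqref{eq:diff(BB):BTaBT} and \eqref{eq:diff(CC):BTaBT}, the key trick is to \emph{not} expand as $\wtd B_1\wtd B_1^{\T}-\what B_1\what B_1^{\T}$ with $B$ appearing twice, but instead to substitute and keep $BB^{\T}$ as a single block:
\[
\wtd B_1\wtd B_1^{\T}-\what B_1\what B_1^{\T}=\wtd Y_1^{\T}(BB^{\T})\wtd Y_1-Y_1^{\T}(BB^{\T})Y_1
=(\wtd Y_1-Y_1)^{\T}(BB^{\T})\wtd Y_1+Y_1^{\T}(BB^{\T})(\wtd Y_1-Y_1).
\]
Applying the UI inequality with the middle factor $BB^{\T}$ gives the normalization by $\|BB^{\T}\|_{\UI}$ rather than $\|B\|_{\UI}^2$, and the coefficient $\sqrt{\|Q\|_2}(1/\munderbar\sigma_r+1/\sigma_r)\epsilon_y$ drops out directly from $\|\wtd Y_1\|_2+\|Y_1\|_2$ times $\epsilon_y$. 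The same decomposition with $CC^{\T}$ and $X_1,\wtd X_1$ produces \eqref{eq:diff(CC):BTaBT}, where $\|\wtd X_1\|_2+\|X_1\|_2\le 2\sqrt{\|P\|_2}$ yields the factor of $2$.

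No step is genuinely hard; the only thing to get right is the bookkeeping in \eqref{eq:diff(A):BTaBT} (choosing which of $\wtd X_1$ or $X_1$ to pair with which difference, so that the spectral-norm bounds stay as clean as possible) and the discipline, in \eqref{eq:diff(BB):BTaBT}--\eqref{eq:diff(CC):BTaBT}, to keep $BB^{\T}$ and $CC^{\T}$ as single blocks rather than splitting them, since otherwise one picks up an unwanted factor of $\|B\|_{\UI}\|B\|_2$ and loses the stated normalization.
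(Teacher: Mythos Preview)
Your proposal is correct and matches the paper's proof essentially line for line: the same size estimates \eqref{eq:bd4X1Y1:BTaBT}, the same telescoping \eqref{eq:diff(A11):BTaBT} for $\wtd A_{11}-\what A_{11}$, and the same trick of keeping $BB^{\T}$ and $CC^{\T}$ intact for the quadratic bounds. Even your remark about the bookkeeping in \eqref{eq:diff(A):BTaBT} anticipates the paper's \Cref{rk:tY1}, which shows the alternative telescoping yields a slightly worse constant.
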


\begin{proof}
In light of  \eqref{eq:nrmSR},
it is not difficult to show that
\begin{equation}\label{eq:bd4X1Y1:BTaBT}
\|X_1\|_2\le\sqrt{\|P\|_2}, \quad
\big\|\wtd X_1\big\|_2\le\sqrt{\|P\|_2}, \quad
\|Y_1\|_2\le\frac {\sqrt{\|Q\|_2}}{\sigma_r}, \quad
\big\|\wtd Y_1\big\|_2\le\frac {\sqrt{\|Q\|_2}}{\munderbar\sigma_r},
\end{equation}
except for the last one, for which we have
$$
\big\|\wtd Y_1\big\|_2\le\big\|\wtd R\big\|_2\big\|\check\Sigma_1^{-1}\big\|_2 \\
  \le\sqrt{\|Q\|_2}\,\frac 1{\munderbar\sigma_r},
$$
which gives the last inequality in \eqref{eq:bd4X1Y1:BTaBT}.
Next we have
\begin{align}\nonumber%
\wtd A_{11}-\what A_{11}&=\wtd Y_1^{\T}A\wtd X_1- Y_1^{\T}A\wtd X_1+ Y_1^{\T}A\wtd X_1- Y_1^{\T}A X_1 \\
   &=(\wtd Y_1- Y_1)^{\T} A\wtd X_1+ Y_1^{\T}A(\wtd X_1-X_1),  \label{eq:diff(A11):BTaBT}\\
\wtd B_1-\wtd B_1&=\wtd Y_1^{\T}B- Y_1^{\T}B=(\wtd Y_1- Y_1)^{\T}B, \nonumber\\
\wtd C_1-\what C_1&=\wtd X_1^{\T}C- X_1^{\T}C=(\wtd X_1- X_1)^{\T}C, \nonumber
\end{align}
and
\begin{align*}
\wtd B_1\wtd B_1^{\T}-\what B_1\what B_1^{\T}
  &=\wtd Y_1^{\T}BB^{\T}\wtd Y_1- Y_1^{\T}BB^{\T}Y_1 \\
  &=\wtd Y_1^{\T}BB^{\T}\wtd Y_1-\wtd Y_1^{\T}BB^{\T} Y_1+\wtd Y_1^{\T}BB^{\T} Y_1- Y_1^{\T}BB^{\T}Y_1\\
  &=\wtd Y_1^{\T}BB^{\T}(\wtd Y_1-Y_1)+(\wtd Y_1-Y_1)^{\T}BB^{\T} Y_1, \\
\wtd C_1\wtd C_1^{\T}-\what C_1\what C_1^{\T}
  &=\wtd X_1^{\T}CC^{\T}\wtd X_1- X_1^{\T}CC^{\T}X_1  \\
  &=\wtd X_1^{\T}CC^{\T}\wtd X_1-\wtd X_1^{\T}CC^{\T} X_1+\wtd X_1^{\T}CC^{\T} X_1- X_1^{\T}CC^{\T}X_1\\
  &=\wtd X_1^{\T}CC^{\T}(\wtd X_1-X_1)+(\wtd X_1-X_1)^{\T}CC^{\T} X_1.
\end{align*}
Take any unitarily invariant norm, e.g., on \eqref{eq:diff(A11):BTaBT}, to get
$$
\big\|\wtd A_{11}-\what A_{11}\big\|_{\UI}
   \le\big\|(\wtd Y_1- Y_1)^{\T}\big\|_2\|A\|_{\UI}\big\|\wtd X_1\big\|_2+ \big\|Y_1^{\T}\big\|_2\|A\|_{\UI}\big\|\wtd X_1-X_1\big\|_2,
$$
and use \eqref{eq:diff(X1Y1):BTaBT} and \eqref{eq:bd4X1Y1:BTaBT} to conclude \eqref{eq:diff(ABC):BTaBT}.
\end{proof}

\begin{remark}\label{rk:tY1}
{\rm
With the last inequality in \eqref{eq:bd4X1Y1:BTaBT}, alternatively to \eqref{eq:diff(A11):BTaBT}, we may use
\begin{align*}
\wtd A_{11}-\what A_{11}&=\wtd Y_1^{\T}A\wtd X_1-\wtd Y_1^{\T}A X_1+\wtd Y_1^{\T}A X_1- Y_1^{\T}A X_1 \\
   &=\wtd Y_1^{\T}A(\wtd X_1-X_1)+(\wtd Y_1- Y_1)^{\T} A X_1,
\end{align*}
and get
$$
\frac {\big\|\wtd A_{11}-\what A_{11}\big\|_{\UI}}{\|A\|_{\UI}}
    \le\frac {\sqrt{\|Q\|_2}}{\munderbar\sigma_r}\,\epsilon_x+\sqrt{\|P\|_2}\,\epsilon_y,
$$
which is slightly worse than \eqref{eq:diff(A):BTaBT} because $0<\munderbar\delta<\munderbar\sigma_r=\sigma_r-\varepsilon\le\sigma_r$.
}
\end{remark}


Previously, we have introduced $H_{\BT}(\cdot)$ in \eqref{eq:H(s):BT} and $\wtd H_{\BT}(\cdot)$ in \eqref{eq:approxH(s):BT}
for the transfer functions for the reduced systems by the true and approximate balanced truncation, respectively.
Let
$$
H_{\df}(s)=H_{\BT}(s)-\wtd H_{\BT}(s),
$$
the difference between the transfer functions,
where subscript `d' is used here and in what follows to stand for `difference' between the related things from the
true balanced truncation and its approximation.

We are interested in established bounds for
$\left\|H_{\df}(\cdot)\right\|_{{\cal H}_{\infty}}$ and
$\left\|H_{\df}(\cdot)\right\|_{{\cal H}_2}$. To this end,  we introduce
\begin{equation}\label{eq:Ks}
K_1=\int_{0}^\infty e^{  \what A_{11}t}e^{\what A_{11}^{\T}t}dt, \quad
K_2=\int_{0}^\infty e^{  \what A_{11}^{\T}t}e^{\what A_{11}t}dt,
\end{equation}
the solutions to $\what A_{11}K_1+K_1\what A_{11}^{\T}+I_r=0$ and $\what A_{11}^{\T}K_2+K_2\what A_{11}+I_r=0$, respectively, and let
\begin{equation}\label{eq:etas}
\eta_1=\|A_{11}\|_2\|K_1\|_2\epsilon_a, \quad
\eta_2=\|A_{11}\|_2\|K_2\|_2\epsilon_a.
\end{equation}
Both $K_1$ and $K_2$ are well-defined because $\what A_{11}$ is from the exact balanced truncation and hence inherits its stability from
the original state matrix $A$.

$H_{\df}(s)$ is the transfer function of the system\begin{equation}\label{eq:LTI:diff}
\left\{\begin{array}{lll}{\hat \bx}'_r(t)&=\what A_{11}\hat \bx_r(t)+\what B_1\bu(t), \quad\mbox{given $\hat\bx_r(0)=\wtd \bx_r(0)$},  \\
 {\wtd {\bx}}'_r(t)&=\wtd A_{11}\wtd {\bx}_r(t)+\wtd B_1\bu(t),
\\{\bz(t)}&=\what C_1^{\T}\hat \bx_r(t)- \wtd C_1^{\T}\wtd \bx_r(t),\end{array}\right.
\end{equation}
or in short,
\begin{equation}\nonumber
\scrS_{\df}=\left(\begin{array}{cc|c}
                   \what A_{11}& & \what B_1\\
                   &\wtd A_{11} &\wtd B_{1}  \\ \hline
                   \vextra\what C_1^{\T}& -\wtd C_1^{\T}&
                   \end{array}\right)
                   =:\left(\begin{array}{c|c}
                    A_{\df}& B_{\df}  \\ \hline
                   \vextra C_{\df}^{\T}&
                   \end{array}\right),
\end{equation}
Denoted by $P_{\df},\,Q_{\df}\in\bbR^{2r\times r}$, the controllability and observability Gramians of \eqref{eq:LTI:diff}, respectively.
They are the solutions to
\begin{subequations}\label{eq:grams:diff}
\begin{align}
A_{\df}P_{\df}+P_{\df}A_{\df}^{\T}  + B_{\df}B_{\df}^{\T}&=0, \label{eq:lyap-cont:diff}\\
A_{\df}^{\T}Q_{\df}+Q_{\df}A_{\df}  + C_{\df}C_{\df}^{\T}&=0, \label{eq:lyap-obse:diff}
\end{align}
\end{subequations}
respectively.
It is well-known that
\begin{equation}\label{eq:dfn4Hd}
\left\|H_{\df}(\cdot)\right\|_{{\cal H}_{\infty}}=\sqrt{\lambda_{\max}(P_{\df}Q_{\df})}, \quad
\left\|H_{\df}(\cdot)\right\|_{{\cal H}_2}=\sqrt{\tr(B_{\df}^{\T}Q_{\df}B_{\df})}=\sqrt{\tr(C_{\df}^{\T}P_{\df}C_{\df})}.
\end{equation}
The ${\cal H}_2$-norm of continuous system \eqref{eq:CDS} is the energy of the associated impulse response  in the time domain \cite{anto:2005}.
Our goals are then turned into estimating the largest eigenvalues of $P_{\df}Q_{\df}$ and the traces.

\begin{lemma}\label{lm:Deltas-PdQd}
If $\eta_i<1/2$ for $i=1,2$, then
\begin{equation}\label{eq:PdQd}
P_{\df}=\underbrace{\begin{bmatrix}
      I_r & I_r \\
      I_r & I_r
    \end{bmatrix}}_{=:P_0}+\underbrace{\begin{bmatrix}
                    0 & \Delta P_{12} \\
                    (\Delta P_{12})^{\T} & \Delta P_{22}
                  \end{bmatrix}}_{=:\Delta P_0}, \quad
Q_{\df}=\underbrace{\begin{bmatrix}
      \Sigma_1^2 & -\Sigma_1^2 \\
      -\Sigma_1^2 & \Sigma_1^2
    \end{bmatrix}}_{=:Q_0}+\underbrace{\begin{bmatrix}
                    0 & \Delta Q_{12} \\
                    (\Delta Q_{12})^{\T} & \Delta Q_{22}
                  \end{bmatrix}}_{=:\Delta Q_0},
\end{equation}
where $\Delta P_{ij},\,\Delta Q_{ij}\in\bbR^{r\times r}$ and satisfy
\begin{subequations}\label{eq:bd4Pd}
\begin{align}
\|\Delta P_{12}\|_2
   &\le\frac {\|K_1\|_2}{1-\eta_1}
            \left(
             \big\|\what B_1\big\|_2\|B\|_2\epsilon_b
             +\|A\|_2\epsilon_a\right)=:\xi_1, \label{eq:bd4Pd-12}\\
\|\Delta P_{22}\|_2
  &\le \frac {\|K_1\|_2}{1-2\eta_1}
       \left(\|BB^{\T}\|_2\epsilon_{b2}+2\|A\|_2\epsilon_a
       \right)=:\xi_2, \label{eq:bd4Pd-22}
\end{align}
\end{subequations}
and
\begin{subequations}\label{eq:bd4Qd}
\begin{align}
\|\Delta Q_{12}\|_2
   &\le\frac {\|K_2\|_2}{1-\eta_2}
            \left(
             \big\|\what C_1\big\|_2\|C\|_2\epsilon_c
             +\|A\|_2\epsilon_a\right)=:\zeta_1, \label{eq:bd4Qd-12}\\
\|\Delta Q_{22}\|_2
  &\le \frac {\|K_2\|_2}{1-2\eta_2}
       \left(\|CC^{\T}\|_2\epsilon_{c2}+2\|A\|_2\epsilon_a
       \right)=:\zeta_2.  \label{eq:bd4Qd-22}
\end{align}
\end{subequations}
\end{lemma}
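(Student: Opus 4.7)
My approach is to exploit the $2\times 2$ block structure of the Lyapunov equations \eqref{eq:grams:diff} induced by the diagonal partition $A_{\df}=\operatorname{diag}(\what A_{11},\wtd A_{11})$. Partitioning $P_{\df},Q_{\df}$ conformally and expanding block-wise, the $(1,1)$ blocks are precisely the controllability and observability Gramians of the exact balanced-truncation reduced system \eqref{eq:CDS:Redu:general} built with the variant $(X_1,Y_1)$ of \eqref{eq:use-X1Y1:BT}, hence equal to $I_r$ and $\Sigma_1^2$ respectively by \eqref{eq:PQ:BT-var}. This confirms the $(1,1)$ entries of $P_0,Q_0$ and, crucially, yields the two cancellation identities $\what A_{11}+\what A_{11}^{\T}=-\what B_1\what B_1^{\T}$ and $\what A_{11}^{\T}\Sigma_1^2+\Sigma_1^2\what A_{11}=-\what C_1\what C_1^{\T}$, which will force the constant parts in the remaining blocks to vanish.

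Next I would substitute the ansatze $P_{12}=I_r+\Delta P_{12}$, $P_{22}=I_r+\Delta P_{22}$, $Q_{12}=-\Sigma_1^2+\Delta Q_{12}$, $Q_{22}=\Sigma_1^2+\Delta Q_{22}$ into the $(1,2)$ and $(2,2)$ blocks of \eqref{eq:lyap-cont:diff}--\eqref{eq:lyap-obse:diff}, together with $\wtd A_{11}=\what A_{11}+\Delta A$, $\wtd B_1=\what B_1+\Delta B$, $\wtd C_1=\what C_1+\Delta C$. The two identities above absorb the constant pieces, and splitting $\wtd A_{11}$ on the open side of the Sylvester operator into $\what A_{11}+\Delta A$ moves the mismatch onto the right-hand side. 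Each perturbation block then solves a Lyapunov equation in $\what A_{11}$ (on the $P$-side) or $\what A_{11}^{\T}$ (on the $Q$-side) whose driving term is a fixed piece built from $\Delta A,\Delta B,\Delta C$ and the ``squared'' differences $\wtd B_1\wtd B_1^{\T}-\what B_1\what B_1^{\T}$, $\wtd C_1\wtd C_1^{\T}-\what C_1\what C_1^{\T}$, plus a self-referential piece in which $\Delta A$ multiplies the unknown.

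The central analytic tool is the Lyapunov operator bound: for stable $\what A_{11}$, the solution $X$ of $\what A_{11}X+X\what A_{11}^{\T}=-M$ admits the integral representation $X=\int_0^\infty e^{\what A_{11}t}Me^{\what A_{11}^{\T}t}\,dt$, and Cauchy--Schwarz applied to $u^{\T}Xv=\int_0^\infty(e^{\what A_{11}^{\T}t}u)^{\T}M(e^{\what A_{11}^{\T}t}v)\,dt$ combined with the identity $u^{\T}K_1u=\int_0^\infty\|e^{\what A_{11}^{\T}t}u\|_2^2\,dt$ yields $\|X\|_2\le\|K_1\|_2\|M\|_2$; the transposed version delivers the analogous bound with $K_2$ on the $Q$-side. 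Applying this to the four perturbation equations, estimating the driving terms via \eqref{eq:diff(ABC):BTaBT}--\eqref{eq:diff(BBCC):BTaBT}, and solving the resulting self-referential inequality $\|\Delta\|_2\le\alpha+\beta\|\Delta\|_2$ with $\beta=\eta_i$ or $2\eta_i$ then produces the factor $(1-\eta_i)^{-1}$ for the $(1,2)$ blocks (where $\Delta A$ appears once) and $(1-2\eta_i)^{-1}$ for the $(2,2)$ blocks (where $\Delta A$ appears on both sides of the Lyapunov operator).

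The main obstacle is purely algebraic bookkeeping. In the $(1,2)$ blocks one must keep the single cross-term $\what B_1\Delta B^{\T}$ (resp.\ $\what C_1\Delta C^{\T}$) so that the bound reads $\big\|\what B_1\big\|_2\|B\|_2\epsilon_b$ (resp.\ $\|\what C_1\|_2\|C\|_2\epsilon_c$) rather than a looser $\|B\|_2^2\epsilon_b$ expansion; in the $(2,2)$ blocks one must invoke \eqref{eq:diff(BB):BTaBT}--\eqref{eq:diff(CC):BTaBT} directly to estimate $\wtd B_1\wtd B_1^{\T}-\what B_1\what B_1^{\T}$ and $\wtd C_1\wtd C_1^{\T}-\what C_1\what C_1^{\T}$ by $\|BB^{\T}\|_2\epsilon_{b2}$ and $\|CC^{\T}\|_2\epsilon_{c2}$, rather than expanding them into $2\|B\|_2\|\Delta B\|_2$-type products. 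Once the substitutions are organized this way, the constants collapse precisely into $\xi_1,\xi_2,\zeta_1,\zeta_2$ and the claimed bounds drop out.
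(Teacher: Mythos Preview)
Your approach is essentially the same as the paper's: partition $P_{\df},Q_{\df}$ blockwise, identify the $(1,1)$ blocks as $I_r$ and $\Sigma_1^2$ via \eqref{eq:PQ:BT-var}, and treat the remaining blocks as perturbations of Lyapunov equations with the mismatch $\Delta A$ pushed to the right-hand side. The only difference is packaging: the paper invokes the ready-made perturbation bound of \Cref{lem:sep} (which already delivers the $(1-\eta)^{-1}$ and $(1-2\eta)^{-1}$ factors), whereas you re-derive that bound inline via the integral representation $X=\int_0^\infty e^{\what A_{11}t}Me^{\what A_{11}^{\T}t}\,dt$ and Cauchy--Schwarz; both routes are standard and lead to the same inequalities. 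One caution: when you carry out the $Q$-side computation, the reference solution is $X=\pm\Sigma_1^2$, so the self-referential term contributes $\|\Sigma_1^2\|_2\|\Delta A\|_2=\sigma_1^2\|A\|_2\epsilon_a$ rather than just $\|A\|_2\epsilon_a$ --- be alert that the constants may not ``collapse precisely'' into the stated $\zeta_1,\zeta_2$ without that extra $\sigma_1^2$ factor (cf.\ the Frobenius-norm version in \Cref{rk:Hdiff}, where $\|\Sigma_1^2\|_{\F}$ does appear).
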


\begin{proof}
Partition both $P_{\df},\,Q_{\df}$ as
$$
P_{\df}=\kbordermatrix{ &\sss r &\sss r\\
            \sss r & P_{11} & P_{12} \\
            \sss r & P_{12}^{\T} & P_{22} }, \quad
Q_{\df}=\kbordermatrix{ &\sss r &\sss r\\
            \sss r & Q_{11} & Q_{12} \\
            \sss r & Q_{12}^{\T} & Q_{22} }.
$$
We start by investigating $P_{\df}$ first. Blockwise, \eqref{eq:lyap-cont:diff} is equivaent to the following three equations:
\begin{subequations}\label{eq:grams:Pd}
\begin{align}
\what A_{11}P_{11}+P_{11}\what A_{11}^{\T}  + \what B_1\what B_1^{\T}&=0, \label{eq:Pd-11}\\
\what A_{11}P_{12}+P_{12}\wtd A_{11}^{\T}   + \what B_1\wtd B_1^{\T}&=0, \label{eq:Pd-12} \\
\wtd A_{11}P_{22}+P_{22}\wtd A_{11}^{\T}  + \wtd B_1\wtd B_1^{\T}&=0. \label{eq:Pd-22}
\end{align}
\end{subequations}
It follows from \Cref{ssec:BT-var} that $P_{11}=I_r$, and from \Cref{lem:sep} that both $P_{12}$ and $P_{22}$ are near $I_r$, and
therefore the form of $P_{\df}$ as in \eqref{eq:PdQd}.
Specifically, by \eqref{eq:diff(ABC):BTaBT} and \Cref{lem:sep}, we have
$$
P_{12}=I_r+\Delta P_{12}, \quad
P_{22}=I_r+\Delta P_{22}
$$
with
\begin{align*}
\|\Delta P_{12}\|_2
   &\le\frac {\|K_1\|_2}{1-\eta_1}
            \left(
             {\big\|\what B_1\big\|_2\|\wtd B_1-\what B_1\|_2}
             +\|\wtd A_{11}-\what A_{11}\|_2\right), \\
\|\Delta P_{22}\|_2
  &\le \frac {\|K_1\|_2}{1-2\eta_1}
       \left(\|\wtd B_1\wtd B_1^{\T}-\what B_1\what B_1\|_2
             +2\|\wtd A_{11}-\what A_{11}\|_2\right).
\end{align*}
They, together with \Cref{thm:diff(ABC):BTaBT}, yield \eqref{eq:bd4Pd}.

We now turn our attention to $Q_{\df}$. Blockwise, \eqref{eq:lyap-obse:diff} is equivalent to the following three equations:
\begin{subequations}\label{eq:grams:Qd}
\begin{align}
\what A_{11}^{\T}Q_{11}+Q_{11}\what A_{11}  + \what C_1\what C_1^{\T}&=0, \label{eq:Qd-11}\\
\what A_{11}^{\T}Q_{12}+Q_{12}\wtd A_{11}   - \what C_1\wtd C_1^{\T}&=0, \label{eq:Qd-12} \\
\wtd A_{11}^{\T}Q_{22}+Q_{22}\wtd A_{11}  + \wtd C_1\wtd C_1^{\T}&=0. \label{eq:Qd-22}
\end{align}
\end{subequations}
It follows from \Cref{ssec:BT-var} that $Q_{11}=\Sigma_1^2$, and from \Cref{lem:sep} that both $-Q_{12}$ and $Q_{22}$
are near $\Sigma_1^2$, and
therefore the form of $Q_{\df}$ as in \eqref{eq:PdQd}.
Specifically, by \eqref{eq:diff(ABC):BTaBT} and \Cref{lem:sep}, we have
$$
Q_{12}=-\Sigma_1^2+\Delta Q_{12}, \quad
Q_{22}=\Sigma_1^2+\Delta Q_{22}
$$
with
\begin{align*}
\|\Delta Q_{12}\|_2
   &\le\frac {\|K_2\|_2}{1-\eta_2}
            \left(
             {\big\|\what C_1\big\|_2\|\wtd C_1-\what C_1\|_2}
             +\|\wtd A_{11}-\what A_{11}\|_2\right), \\
\|\Delta Q_{22}\|_2
  &\le \frac {\|K_2\|_2}{1-2\eta_2}
       \left(\|\wtd C_1\wtd C_1^{\T}-\what C_1\what C_1\|_2
             +2\|\wtd A_{11}-\what A_{11}\|_2\right).
\end{align*}
They, together with \Cref{thm:diff(ABC):BTaBT}, yield \eqref{eq:bd4Qd}.
\end{proof}

\begin{remark}\label{rk:Hdiff}
Bounds on $\|\Delta P_{ij}\|_{\F}$ and $\|\Delta Q_{ij}\|_{\F}$ can also be established, only a little more complicated than
\eqref{eq:bd4Pd} and \eqref{eq:bd4Qd}, upon using \Cref{lem:sep} with the Frobenius norm and noticing
$$
\|I_r\|_{\F}=\sqrt r, \quad
\|\Sigma_1^2\|_{\F}=\left(\sum_{i=1}^r\sigma_i^4\right)^{1/2}\le\sqrt r\,\sigma_1^2.
$$
In fact, we will have
\begin{align*}
\|\Delta P_{12}\|_{\F}
   &\le\frac {\|K_1\|_2}{1-\eta_1}
            \left(
             {\big\|\what B_1\big\|_2\|\wtd B_1-\what B_1\|_{\F}}
             +\sqrt r\,\|\wtd A_{11}-\what A_{11}\|_2\right) \\ 
   &\le\frac {\|K_1\|_2}{1-\eta_1}
            \left(
             \big\|\what B_1\big\|_2\|B\|_{\F}\epsilon_b
             +\sqrt r\,\|A\|_2\epsilon_a\right), \\ 
\|\Delta P_{22}\|_{\F}
  &\le \frac {\|K_1\|_2}{1-2\eta_1}
       \left(\|\wtd B_1\wtd B_1^{\T}-\what B_1\what B_1\|_{\F}
             +2\sqrt r\,\|\wtd A_{11}-\what A_{11}\|_2\right) \\ 
  &\le \frac {\|K_1\|_2}{1-2\eta_1}
       \left(\|BB^{\T}\|_{\F}\epsilon_{b2}+2\sqrt r\,\|A\|_2\epsilon_a\right), \\
\|\Delta Q_{12}\|_{\F}
   &\le\frac {\|K_2\|_2}{1-\eta_2}
            \left(
             {\big\|\what C_1\big\|_2\|\wtd C_1-\what C_1\|_{\F}}
             +\|\Sigma_1^2\|_{\F}\|\wtd A_{11}-\what A_{11}\|_2\right) \\ 
   &\le\frac {\|K_2\|_2}{1-\eta_2}
            \left(
             \big\|\what C_1\big\|_2\|C\|_{\F}\epsilon_c
             +\sqrt r\,\sigma_1^2\|A\|_2\epsilon_a\right), \\ 
\|\Delta Q_{22}\|_{\F}
  &\le \frac {\|K_2\|_2}{1-2\eta_2}
       \left(\|\wtd C_1\wtd C_1^{\T}-\what C_1\what C_1\|_{\F}
             +2\|\Sigma_1^2\|_{\F}\|\wtd A_{11}-\what A_{11}\|_2\right) \\ 
  &\le \frac {\|K_2\|_2}{1-2\eta_2}
       \left(\|CC^{\T}\|_{\F}\epsilon_{c2}+2\sqrt r\,\sigma_1^2\|A\|_2\epsilon_a
       \right).
\end{align*}
But these bounds are not materially better than these straightforwardly obtained from \eqref{eq:bd4Pd} and \eqref{eq:bd4Qd}, together with
$\|M\|_{\F}\le\sqrt r\|M\|_2$ for any $M\in\bbR^{r\times r}$.
\end{remark}

\begin{theorem}\label{thm:Hdiff}
If $\eta_i<1/2$ for $i=1,2$, then
\begin{align}
\left\|H_{\df}(\cdot)\right\|_{{\cal H}_{\infty}}
  &\le\sqrt{2\sigma_1^2(\xi_1+\xi_2)+2(\zeta_1+\zeta_2)+(\xi_1+\xi_2)\zeta_1+\zeta_2)}=:\epsilon_{{\df},\infty}, \label{eq:bd4Hd}\\
\left\|H_{\df}(\cdot)\right\|_{{\cal H}_2}
  &\le\sqrt{\min\{r,m\}}\left[\sigma_1^2\big(\big\|\what B_1\big\|_2+\big\|\wtd B_1\big\|_2\big)\|B\|_2\epsilon_b\right. \nonumber \\
  &     \hspace*{2cm}\left. +\left(2\|\what B_1^{\T}\|_2\big\|\wtd B_1\big\|_2\,\zeta_1+\|\wtd B_1^{\T}\|_2^2\,\zeta_2\right)\right]^{1/2}
        =:\epsilon_{{\df},2},  \label{eq:bd4Hd:nrmH2}
\end{align}
where  $\xi_i$ and $\zeta_i$ for $i=1,2$ are defined in \Cref{lm:Deltas-PdQd}, and
$m$ and $p$ is the numbers of columns of $B$ and $C$, respectively.
\end{theorem}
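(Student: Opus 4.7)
The plan is to work directly from the Gramian-based formulas in \eqref{eq:dfn4Hd} and to leverage the decompositions $P_{\df} = P_0 + \Delta P_0$ and $Q_{\df} = Q_0 + \Delta Q_0$ supplied by \Cref{lm:Deltas-PdQd}. The pivotal observation is the algebraic cancellation
\[
P_0\,Q_0 = \begin{bmatrix} I_r & I_r \\ I_r & I_r \end{bmatrix}\begin{bmatrix} \Sigma_1^2 & -\Sigma_1^2 \\ -\Sigma_1^2 & \Sigma_1^2 \end{bmatrix} = 0,
\]
so that $P_{\df}Q_{\df} = P_0\Delta Q_0 + \Delta P_0\,Q_0 + \Delta P_0\Delta Q_0$ carries no $O(1)$ leading piece and everything left is of size $\varepsilon$.

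For the $\cH_\infty$-bound I would use that $P_{\df}Q_{\df}$ is the product of two positive semidefinite matrices, so its eigenvalues are real and non-negative, and hence $\lambda_{\max}(P_{\df}Q_{\df}) \le \|P_{\df}Q_{\df}\|_2$. A quick eigenvalue computation on the $2\times 2$ patterns attached to each diagonal entry of $\Sigma_1$ gives $\|P_0\|_2 = 2$ and $\|Q_0\|_2 = 2\sigma_1^2$. The block estimate $\|\Delta P_0\|_2 \le \|\Delta P_{12}\|_2 + \|\Delta P_{22}\|_2$ (and analogously for $\Delta Q_0$), combined with \eqref{eq:bd4Pd}--\eqref{eq:bd4Qd}, yields $\|\Delta P_0\|_2 \le \xi_1 + \xi_2$ and $\|\Delta Q_0\|_2 \le \zeta_1 + \zeta_2$. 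Assembling the three product-norm contributions $\|P_0\Delta Q_0\|_2$, $\|\Delta P_0\,Q_0\|_2$, $\|\Delta P_0\Delta Q_0\|_2$ and taking a square root produces \eqref{eq:bd4Hd}.

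For the $\cH_2$-bound I would expand $\tr(B_{\df}^{\T}Q_{\df}B_{\df})$ block-wise with $B_{\df}=[\what B_1;\wtd B_1]$, substitute $Q_{11}=\Sigma_1^2$, $Q_{12}=-\Sigma_1^2+\Delta Q_{12}$, $Q_{22}=\Sigma_1^2+\Delta Q_{22}$, and regroup the four terms carrying $\pm\Sigma_1^2$ into the single quadratic form $(\what B_1-\wtd B_1)^{\T}\Sigma_1^2(\what B_1-\wtd B_1)$. What remains is $2\tr(\Delta Q_{12}\wtd B_1\what B_1^{\T}) + \tr(\Delta Q_{22}\wtd B_1\wtd B_1^{\T})$. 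For each trace I would invoke $|\tr(MN)| \le \|M\|_2\|N\|_*$ with $\|N\|_* \le \operatorname{rank}(N)\,\|N\|_2$, using that every outer-product $N$ in sight has rank at most $\min\{r,m\}$ because $\what B_1,\wtd B_1\in\bbR^{r\times m}$ factor through $\bbR^m$. Splitting the main quadratic form as $\what B_1^{\T}\Sigma_1^2(\what B_1-\wtd B_1) - \wtd B_1^{\T}\Sigma_1^2(\what B_1-\wtd B_1)$ lets the factor $\|\what B_1\|_2+\|\wtd B_1\|_2$ emerge, and substituting $\|\what B_1-\wtd B_1\|_2\le\|B\|_2\epsilon_b$ from \Cref{thm:diff(ABC):BTaBT} assembles \eqref{eq:bd4Hd:nrmH2}.

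The main obstacle is spotting the cancellation $P_0Q_0=0$, because without it a direct norm expansion would leave an $O(\sigma_1^2)$ term that refuses to vanish with $\varepsilon$, rendering the bound useless for small perturbations; the identity rests on the balanced form $P_{11}=I_r$, $Q_{11}=\Sigma_1^2$ inherited from \Cref{ssec:BT-var}, which makes $P_0$ and $Q_0$ mutually annihilating low-rank blocks. Once that cancellation is isolated, the rest is routine block bookkeeping for $\cH_\infty$ and the standard trace--nuclear-norm inequality with a rank bound of $\min\{r,m\}$ for $\cH_2$.
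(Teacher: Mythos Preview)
Your proposal is correct and follows essentially the same route as the paper: both exploit the cancellation $P_0Q_0=0$ together with $\|P_0\|_2=2$, $\|Q_0\|_2=2\sigma_1^2$, and the block bounds $\|\Delta P_0\|_2\le\xi_1+\xi_2$, $\|\Delta Q_0\|_2\le\zeta_1+\zeta_2$ for the $\cH_\infty$ estimate, and both split $\tr(B_{\df}^{\T}Q_{\df}B_{\df})$ into the $Q_0$-piece (rewritten via $\what B_1-\wtd B_1$) plus the $\Delta Q_0$-piece, each bounded by a $\min\{r,m\}$-rank trace inequality. Your trace--nuclear-norm phrasing $|\tr(MN)|\le\|M\|_2\|N\|_*\le\rank(N)\|M\|_2\|N\|_2$ is equivalent to the paper's direct use of $|\tr(M)|\le\rank(M)\|M\|_2$.
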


\begin{proof}
Recall \eqref{eq:PdQd}.
Noticing that
\begin{gather*}
\|P_0\|_2=2, \quad
\|Q_0\|_2=2\sigma_1^2, \quad
P_0Q_0=0, \\
\|\Delta P_0\|_2\le\|\Delta P_{12}\|_2+\|\Delta P_{22}\|_2, \quad
\|\Delta Q_0\|_2\le\|\Delta Q_{12}\|_2+\|\Delta Q_{22}\|_2,
\end{gather*}
we get
\begin{align*}
\lambda_{\max}(P_{\df}Q_{\df})
  &\le\|P_{\df}Q_{\df}\|_2 \\
  &\le\|P_0Q_0+P_0\Delta Q_0+(\Delta P_0)Q_0+(\Delta P_0)(\Delta Q_0)\|_2 \\
  &\le 2\left(\|\Delta Q_{12}\|_2+\|\Delta Q_{22}\|_2\right)
       +2\sigma_1^2\left(\|\Delta P_{12}\|_2+\|\Delta P_{22}\|_2\right) \\
  &\quad +\left(\|\Delta Q_{12}\|_2+\|\Delta Q_{22}\|_2\right)\left(\|\Delta P_{12}\|_2+\|\Delta P_{22}\|_2\right),
\end{align*}
which together with \eqref{eq:bd4Pd} and \eqref{eq:bd4Qd} lead to \eqref{eq:bd4Hd}, upon
noticing  \eqref{eq:dfn4Hd}.

Next we prove \eqref{eq:bd4Hd:nrmH2}. We claim that
\begin{align}
\tr\big(B_{\df}^{\T}Q_0B_{\df}\big)
   &\le \min\{r,m\}\,\sigma_1^2\big(\big\|\what B_1\big\|_2+\big\|\wtd B_1\big\|_2\big)\|B\|_2\epsilon_b, \label{eq:lm4H2-3}\\
|\tr(B_{\df}^{\T}[\Delta Q_0]B_{\df})|
    &\le\min\{r,m\}\left(2\|\what B_1^{\T}\|_2\big\|\wtd B_1\big\|_2\,\zeta_1+\|\wtd B_1^{\T}\|_2^2\,\zeta_2\right).
       \label{eq:lm4H2:Delta-1}
\end{align}
Note that, for any square matrix $M$,
$$
\tr(M)\le \rank(M)\,\|M\|_2.
$$
Using \Cref{thm:diff(ABC):BTaBT}, we have
\begin{align*}
\tr\big(B_{\df}^{\T}Q_0B_{\df}\big)
  &=\tr\big(\what B_1^{\T}\Sigma_1^2\what B_1-2\what B_1^{\T}\Sigma_1^2\wtd B_1+\wtd B_1^{\T}\Sigma_1^2\wtd B_1\big) \\
  &=\tr\big(\what B_1^{\T}\Sigma_1^2[\what B_1-\wtd B_1]\big)+\tr([\wtd B_1-\what B_1]^{\T}\Sigma_1^2\wtd B_1\big) \\
  &\le\min\{r,m\}\big\|\what B_1^{\T}\Sigma_1^2[\what B_1-\wtd B_1]\big\|_2+\big\|[\wtd B_1-\what B_1]^{\T}\Sigma_1^2\wtd B_1\big\|_2 \\
  &\le\min\{r,m\}\big\|\Sigma_1^2\big\|_2\big(\big\|\what B_1\big\|_2+\big\|\wtd B_1\big\|_2\big)\|\what B_1-\wtd B_1\|_2 \\
  &\le\min\{r,m\}\,\sigma_1^2\big(\big\|\what B_1\big\|_2+\big\|\wtd B_1\big\|_2\big)\|B\|_2\epsilon_b,
\end{align*}
proving \eqref{eq:lm4H2-3}, and
\begin{align*}
|\tr\big(B_{\df}^{\T}[\Delta Q_0]B_{\df}\big)|
  &=|\tr\big(2\what B_1^{\T}[\Delta Q_{12}]\wtd B_1\big)+\wtd B_1^{\T}[\Delta Q_{22}]\wtd B_1\big)|\\
  &\le\min\{r,m\}\left(2\big\|\what B_1^{\T}[\Delta Q_{12}]\wtd B_1\big\|_2+\big\|\wtd B_1^{\T}[\Delta Q_{22}]\wtd B_1\big\|_2\right) \\
  &\le\min\{r,m\}\left(2\big\|\what B_1^{\T}\big\|_2\big\|\wtd B_1\big\|_2\|\Delta Q_{12}\|_2
              +\big\|\wtd B_1^{\T}\big\|_2^2\|\Delta Q_{22}\|_2\right),
\end{align*}
yielding \eqref{eq:lm4H2:Delta-1}. With \eqref{eq:lm4H2-3} and \eqref{eq:lm4H2:Delta-1}, we are ready to show \eqref{eq:bd4Hd:nrmH2}.
We have
\begin{align*}
\|H_{\df}(\cdot)\|_{{\cal H}_{2}}^2
    &=\tr\big(B_{\df}^{\T}Q_{\df}B_{\df}\big)=\tr\big(B_{\df}^{\T}Q_0B_{\df}\big)+\tr\big(B_{\df}^{\T}[\Delta Q_0]B_{\df}\big) \\
    &\le\min\{r,m\}\,\sigma_1^2\big(\big\|\what B_1\big\|_2+\big\|\wtd B_1\big\|_2\big)\|B\|_2\epsilon_b \\
    &\quad    +\min\{r,m\}\left(2\big\|\what B_1^{\T}\big\|_2\big\|\wtd B_1\big\|_2\,\zeta_1
               +\big\|\wtd B_1^{\T}\big\|_2^2\,\zeta_2\right),
\end{align*}
as expected.
\end{proof}

\begin{remark}\label{rk:bndappHr2}
Alternatively, basing on the second expression in \eqref{eq:dfn4Hd} for $\|H_{\df}(\cdot)\|_{{\cal H}_{2}}$, we can derive
a different bound. Similarly to \eqref{eq:lm4H2-3} and \eqref{eq:lm4H2:Delta-1}, we claim that
\begin{align}
\tr\big(C_{\df}^{\T}P_0C_{\df}\big)
    &\le \min\{r,p\}\big(\big\|\what C_1\big\|_2+\big\|\wtd C_1\big\|_2\big)\|C\|_2\epsilon_c, \label{eq:lm4H2-4} \\
|\tr\big(C_{\df}^{\T}[\Delta P_0]C_{\df}\big)|
    &\le\min\{r,p\}\left(2\big\|\what C_1^{\T}\big\|_2\big\|\wtd C_1\big\|_2\,\xi_1
                 +\big\|\wtd C_1^{\T}\big\|_2^2\,\xi_2\right). \label{eq:lm4H2:Delta-2}
\end{align}
They can be proven, analogously along the line we proved \eqref{eq:lm4H2-3} and \eqref{eq:lm4H2:Delta-1}, as follows:
\begin{align*}
\tr\big(C_{\df}^{\T}P_0C_{\df}\big)
  &\le\min\{r,p\}\big(\big\|\what C_1\big\|_2+\big\|\wtd C_1\big\|_2\big)\|\what C_1-\wtd C_1\|_2 \\
  &\le\min\{r,p\}\big(\big\|\what C_1\big\|_2+\big\|\wtd C_1\big\|_2\big)\|C\|_2\epsilon_c, \\
|\tr\big(C_{\df}^{\T}[\Delta P_0]C_{\df}\big)|
  &=|\tr\big(2\what C_1^{\T}[\Delta P_{12}]\wtd C_1\big)+\wtd C_1^{\T}[\Delta P_{22}]\wtd C_1\big)|\\
  &\le\min\{r,m\}\left(2\big\|\what C_1^{\T}[\Delta P_{12}]\wtd C_1\big\|_2+\big\|\wtd C_1^{\T}[\Delta P_{22}]\wtd C_1\big\|_2\right) \\
  &\le\min\{r,m\}\left(2\big\|\what C_1^{\T}\big\|_2\big\|\wtd C_1\big\|_2\|\Delta P_{12}\|_2
        +\big\|\wtd C_1^{\T}\big\|_2^2\|\Delta P_{22}\|_2\right).
\end{align*}
Finally,
\begin{align*}
\|H_{\df}(\cdot)\|_{{\cal H}_{2}}^2
    &=\tr\big(C_{\df}^{\T}P_{\df}C_{\df}\big)=\tr\big(C_{\df}^{\T}P_0C_{\df}\big)+\tr\big(C_{\df}^{\T}[\Delta P_0]C_{\df}\big) \\
    &\le\min\{r,p\}\big(\big\|\what C_1\big\|_2+\big\|\wtd C_1\big\|_2\big)\|C\|_2\epsilon_c \\
    &\quad +\min\{r,p\}\left(2\big\|\what C_1^{\T}\big\|_2\big\|\wtd C_1\big\|_2\,\xi_1
           +\big\|\wtd C_1^{\T}\big\|_2^2\,\xi_2\right),
\end{align*}
yielding a different $\epsilon_{{\df},2}$ from the one in \eqref{eq:bd4Hd:nrmH2}. It is not clear which one is smaller.
\end{remark}

Norms of the coefficient matrices for the reduced systems appear in the bounds in \Cref{thm:Hdiff}.
They can be replaced by the norms of the corresponding coefficient matrices for the original system with the help of
the next lemma.

\begin{lemma}\label{lm:nrms-approxBT}
For any unitarily invariant norm $\|\cdot\|_{\UI}$, we have 
\begin{subequations}\label{eq:nrms-approxBT}
\begin{alignat}{2}
\frac {\|\what A_{11}\|_{\UI}}{\|A\|_{\UI}}&\le\frac {\sqrt{\|P\|_2\|Q\|_2}}{\sigma_r}, \quad
    &\frac {\|\wtd A_{11}\|_{\UI}}{\|A\|_{\UI}}&\le\frac {\sqrt{\|P\|_2\|Q\|_2}}{\sigma_r}+\epsilon_a, \label{eq:nrms-approxBT-A}\\
\frac {\big\|\what B_1\big\|_{\UI}}{\|B\|_{\UI}}&\le\frac {\sqrt{\|Q\|_2}}{\sigma_r}, \quad
    &\frac {\big\|\wtd B_1\big\|_{\UI}}{\|B\|_{\UI}}&\le\frac {\sqrt{\|Q\|_2}}{\sigma_r}+\epsilon_b, \label{eq:nrms-approxBT-B}\\
\frac {\big\|\what C_1\big\|_{\UI}}{\|C\|_{\UI}}&\le\sqrt{\|P\|_2}, \quad
    &\frac {\big\|\what C_1\big\|_{\UI}}{\|C\|_{\UI}}&\le\sqrt{\|P\|_2}+\epsilon_c, \label{eq:nrms-approxBT-C}
\end{alignat}
\end{subequations}
where $\epsilon_a$,   $\epsilon_b$, and $\epsilon_c$ are as in \eqref{eq:diff(ABC):BTaBT}.
\end{lemma}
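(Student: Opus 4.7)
The plan is to handle the left column (bounds on the exact balanced truncation quantities $\what A_{11},\,\what B_1,\,\what C_1$) directly from the definitions \eqref{eq:use-X1Y1:BT} and \eqref{eq:reduced-matrices:BT}, and then to obtain the right column by a one-line triangle-inequality argument using \Cref{thm:diff(ABC):BTaBT}. The only analytical tool I need is the submultiplicative property of any unitarily invariant norm, namely $\|MNL\|_{\UI}\le\|M\|_2\|N\|_{\UI}\|L\|_2$.

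For the left column, I would use $X_1=SU_1$ and $Y_1=RV_1\Sigma_1^{-1}$. Since $U_1$ and $V_1$ have orthonormal columns and $\sigma_{\min}(\Sigma_1)=\sigma_r$, I get the two-norm bounds
$\|X_1\|_2\le\|S\|_2=\sqrt{\|P\|_2}$ and $\|Y_1\|_2\le\|R\|_2\|\Sigma_1^{-1}\|_2=\sqrt{\|Q\|_2}/\sigma_r$, which are already recorded in \eqref{eq:bd4X1Y1:BTaBT}. Plugging $\what A_{11}=Y_1^{\T}AX_1$, $\what B_1=Y_1^{\T}B$, $\what C_1=X_1^{\T}C$ into the submultiplicative inequality immediately yields
$\|\what A_{11}\|_{\UI}\le\|Y_1\|_2\|A\|_{\UI}\|X_1\|_2\le(\sqrt{\|P\|_2\|Q\|_2}/\sigma_r)\|A\|_{\UI}$,
$\|\what B_1\|_{\UI}\le\|Y_1\|_2\|B\|_{\UI}\le(\sqrt{\|Q\|_2}/\sigma_r)\|B\|_{\UI}$, and
$\|\what C_1\|_{\UI}\le\|X_1\|_2\|C\|_{\UI}\le\sqrt{\|P\|_2}\|C\|_{\UI}$, which are the three left-column bounds in \eqref{eq:nrms-approxBT}.

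For the right column, I would simply apply the triangle inequality
$\|\wtd A_{11}\|_{\UI}\le\|\what A_{11}\|_{\UI}+\|\wtd A_{11}-\what A_{11}\|_{\UI}$, and likewise for $\wtd B_1$ and $\wtd C_1$. Dividing by $\|A\|_{\UI}$, $\|B\|_{\UI}$, $\|C\|_{\UI}$ respectively, and invoking the perturbation bounds \eqref{eq:diff(ABC):BTaBT} from \Cref{thm:diff(ABC):BTaBT} (which produce the quantities $\epsilon_a$, $\epsilon_b$, $\epsilon_c$), then combining with the left-column bounds already derived, completes the proof. There is no real obstacle here; the lemma is essentially a bookkeeping corollary of \eqref{eq:use-X1Y1:BT}, the unitarily-invariant-norm inequality, and \Cref{thm:diff(ABC):BTaBT}.
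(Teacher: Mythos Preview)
Your proposal is correct and follows essentially the same approach as the paper: bound $\|X_1\|_2$ and $\|Y_1\|_2$ from \eqref{eq:use-X1Y1:BT} (as in \eqref{eq:bd4X1Y1:BTaBT}), apply the submultiplicative inequality for unitarily invariant norms to get the left column, and then use the triangle inequality together with \Cref{thm:diff(ABC):BTaBT} for the right column.
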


\begin{proof}
We have by \eqref{eq:use-X1Y1:BT}
\begin{align*}
\big\|\what A_{11}\big\|_{\UI} =\big\|Y_1^{\T}A X_1\big\|_{\UI}&\le\big\|Y_1^{\T}\big\|_2\|A\|_{\UI}\|X_1\|_2 \\
          &\le\|R\|_2\big\|\Sigma_1^{-1}\big\|_2\|A\|_{\UI}\|S\|_2 \\
          &=\frac {\sqrt{\|P\|_2\|Q\|_2}}{\sigma_r}\,\|A\|_{\UI}, \\
\big\|\what B_1\big\|_{\UI} =\|Y_1^{\T}B\|_{\UI}&\le\frac {\sqrt{\|Q\|_2}}{\sigma_r}\,\|B\|_{\UI}, \\
\big\|\what C_1\big\|_{\UI} =\|Y_1^{\T}C\|_{\UI}&\le\sqrt{\|P\|_2}\,\|C\|_{\UI},
\end{align*}
Therefore
$$
\big\|\wtd A_{11}\big\|_{\UI}\le\big\|\what A_{11}\big\|_{\UI}+\big\|\wtd A_{11}-\what A_{11}\big\|_{\UI}
          \le\left(\frac {\sqrt{\|P\|_2\|Q\|_2}}{\sigma_r}+\epsilon_a\right)\|A\|_{\UI},
$$
and similarly for $\big\|\wtd B_1\big\|_{\UI}$ and $\big\|\wtd C_1\big\|_{\UI}$. The proofs of the other two inequalities are similar.
\end{proof}

\subsection{Transfer function for approximate balanced truncation}\label{ssec:bd4trfun}
In this subsection, we establish bounds to measure the quality of the reduced system \eqref{eq:approxBT-CDS} from
approximate balanced truncation as an approximation to the original system
\eqref{eq:CDS}. Even though the projection matrices $\wtd X_1,\,\wtd Y_1$ we used
for approximate balanced truncation are different from the ones in practice, the transfer function
as a result remains the same, nonetheless. Therefore our bounds
are applicable in real applications. These bounds are the immediate consequences of
\Cref{thm:Hdiff} upon using
$$
\big\|H(\cdot)-\wtd H_{\BT}(\cdot)\big\|\le
    \big\|H(\cdot)-H_{\BT}(\cdot)\big\|+ \big\|H_{\BT}(\cdot)-\wtd H_{\BT}(\cdot)\big\|
$$
for $\|\cdot\|=\|\cdot\|_{{\cal H}_{\infty}}$ and $\|\cdot\|_{{\cal H}_2}$.

\begin{theorem}\label{thm:main}
Under the conditions of \Cref{thm:Hdiff}, we have
\begin{align}
\big\|H(\cdot)-\wtd H_{\BT}(\cdot)\big\|_{{\cal H}_{\infty}}
    &\le 2 \sum_{k=r+1}^n\sigma_k+  \epsilon_{{\df},\infty}, \label{eq:bndappHrinft} \\
\big\|H(\cdot)-\wtd H_{\BT}(\cdot)\big\|_{{\cal H}_{2}}
    &\le \big\|H(\cdot)- H_{\BT}(\cdot)\big\|_{{\cal H}_{2}}+ \epsilon_{{\df},2},
                  \label{eq:bndappHr}
\end{align}
where $\epsilon_{{\df},\infty}$ and $\epsilon_{{\df},2}$ are as in \Cref{thm:Hdiff}.
\end{theorem}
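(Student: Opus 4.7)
The plan is to deduce both bounds from a single triangle inequality together with two results already established. Write
\[
\big\|H(\cdot)-\wtd H_{\BT}(\cdot)\big\| \;\le\; \big\|H(\cdot)-H_{\BT}(\cdot)\big\| \;+\; \big\|H_{\BT}(\cdot)-\wtd H_{\BT}(\cdot)\big\|
\]
for $\|\cdot\|=\|\cdot\|_{\mathcal H_\infty}$ and $\|\cdot\|=\|\cdot\|_{\mathcal H_2}$, as indicated in the discussion preceding the statement. Then estimate the two summands separately.

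For the first term, note that $H_{\BT}(\cdot)$ is the transfer function of the true balanced truncation of order $r$, so its $\mathcal H_\infty$-distance from $H(\cdot)$ is controlled by the classical global bound of \Cref{thm:BT-thy}, giving the $2\sum_{k=r+1}^n\sigma_k$ contribution in \eqref{eq:bndappHrinft}. For the $\mathcal H_2$-version \eqref{eq:bndappHr}, I would simply keep the term $\|H(\cdot)-H_{\BT}(\cdot)\|_{\mathcal H_2}$ in symbolic form, since the paper does not insist on a closed-form bound for it and various $\mathcal H_2$-bounds for balanced truncation are known in the literature.

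For the second term, observe that $H_{\BT}(\cdot)-\wtd H_{\BT}(\cdot)$ is precisely the error transfer function $H_{\df}(\cdot)$ introduced right before \Cref{lm:Deltas-PdQd}, realized by the expanded system $\scrS_{\df}$. Applying \Cref{thm:Hdiff} directly furnishes
\[
\big\|H_{\BT}(\cdot)-\wtd H_{\BT}(\cdot)\big\|_{\mathcal H_\infty}\le \epsilon_{{\df},\infty},
\qquad
\big\|H_{\BT}(\cdot)-\wtd H_{\BT}(\cdot)\big\|_{\mathcal H_2}\le \epsilon_{{\df},2},
\]
provided the hypotheses $\eta_i<1/2$ ($i=1,2$) of \Cref{thm:Hdiff} hold, which are carried over verbatim as the hypotheses of the present theorem. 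Adding the two estimates in each triangle inequality yields \eqref{eq:bndappHrinft} and \eqref{eq:bndappHr} respectively.

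There is essentially no obstacle: all the heavy lifting (perturbed SVD in \Cref{thm:SVD4tG:approxBT}, coefficient bounds in \Cref{thm:diff(ABC):BTaBT}, and Gramian-perturbation analysis of the expanded error system in \Cref{lm:Deltas-PdQd} and \Cref{thm:Hdiff}) has been done previously. The only point worth checking is that the conditions of \Cref{thm:Hdiff} (in particular those inherited from \Cref{thm:SVD4tG:approxBT}, namely $\delta-2\varepsilon>0$ and $\varepsilon/\munderbar\delta<1/2$) are compatible with the applicability of \Cref{thm:BT-thy}; but \Cref{thm:BT-thy} has no extra hypotheses beyond stability, observability, and controllability, which are already standing assumptions since \Cref{sec:MOR:review}. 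Hence the proof reduces to invoking the triangle inequality and quoting \Cref{thm:BT-thy} and \Cref{thm:Hdiff}.
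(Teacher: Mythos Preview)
Your proposal is correct and matches the paper's own argument exactly: the paper states (just before the theorem) that both bounds are immediate consequences of the triangle inequality $\|H-\wtd H_{\BT}\|\le\|H-H_{\BT}\|+\|H_{\BT}-\wtd H_{\BT}\|$ combined with \Cref{thm:Hdiff}, invoking \Cref{thm:BT-thy} for the first term in the $\mathcal H_\infty$ case.
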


An immediate explanation to both inequalities  \eqref{eq:bndappHrinft} and \eqref{eq:bndappHr} is that
the reduced system \eqref{eq:approxBT-CDS} from the approximate balanced reduction as an approximation to the original system
\eqref{eq:CDS}
is worse than the one from the true balanced reduction by no more than
$\epsilon_{{\df},\infty}$ and $\epsilon_{{\df},2}$ in terms of the ${\cal H}_{\infty}$- and
${\cal H}_2$-norm, respectively. Both $\epsilon_{{\df},\infty}$ and $\epsilon_{{\df},2}$ can be traced back to
the initial approximation errors $\epsilon_1$ and $\epsilon_2$ in the computed Gramians as
specified in \eqref{eq:approxPQ} albeit complicatedly. To better understand what $\epsilon_{{\df},\infty}$ and $\epsilon_{{\df},2}$
are in terms of $\epsilon_1$ and $\epsilon_2$, we summarize all quantities that lead to them, up to the first order
in
$$
\epsilon_{\app}:=\max\{\epsilon_1,\epsilon_2\}.
$$
Then $\varepsilon\le\rho\sqrt{\epsilon_{\app}}+\epsilon_{\app}$ in \eqref{eq:nrm(GtG)}.
Let $\rho=\max\big\{\sqrt{\|P\|_2},\sqrt{\|Q\|_2}\big\}$. We have
\begin{alignat}{2}
\epsilon_x&\le\left(1+\frac {2\rho^2}{\delta}\right)
           \sqrt{\epsilon_{\app}}+O(\epsilon_{\app}),
           &&\quad(\mbox{see \eqref{eq:diff(X1Y1):BTaBT}}) \nonumber \\
\epsilon_y&\le\frac 1{\sigma_r} \left[1+\left(1+\frac {\delta}{2\sigma_r}+\frac {2\sigma_1}{\sigma_r}\right)
          \,\frac {2\rho^2}{\delta}\right]
           \sqrt{\epsilon_{\app}}+O(\epsilon_{\app}),
           &&\quad(\mbox{see \eqref{eq:diff(X1Y1):BTaBT}}) \nonumber \\
\epsilon_a&\le \frac {\rho}{\sigma_r}\epsilon_x+\rho\epsilon_y,
           &&\quad(\mbox{see \eqref{eq:diff(ABC):BTaBT}}) \nonumber \\
\epsilon_b&=\epsilon_y,
           &&\quad(\mbox{see \eqref{eq:diff(ABC):BTaBT}}) \nonumber \\
\epsilon_c&=\epsilon_x,
           &&\quad(\mbox{see \eqref{eq:diff(ABC):BTaBT}}) \nonumber \\
\epsilon_{b2}&=\frac {2\rho}{\sigma_r}\,\epsilon_y+O(\epsilon_{\app}),
           &&\quad(\mbox{see \eqref{eq:diff(BBCC):BTaBT}}) \nonumber \\
\epsilon_{c2}&=2\rho\epsilon_x,
           &&\quad(\mbox{see \eqref{eq:diff(BBCC):BTaBT}}) \nonumber \\
\xi_1&\le\|K_1\|_2\left(\|A\|_2\epsilon_a+\frac {\rho}{\sigma_r}\|B\|_2^2\epsilon_b\right)+O(\epsilon_{\app}),
           &&\quad(\mbox{see \eqref{eq:bd4Pd}}) \nonumber \\
\xi_2&\le\|K_1\|_2\left(2\|A\|_2\epsilon_a+\|B\|_2^2\epsilon_{b2}\right)+O(\epsilon_{\app}),
           &&\quad(\mbox{see \eqref{eq:bd4Pd}}) \nonumber \\
\zeta_1&\le\|K_2\|_2\left(\|A\|_2\epsilon_a+ {\rho}\|C\|_2^2\epsilon_c\right)+O(\epsilon_{\app}),
           &&\quad(\mbox{see \eqref{eq:bd4Qd}}) \nonumber \\
\zeta_2&\le\|K_2\|_2\left(2\|A\|_2\epsilon_a+ \|C\|_2^2\epsilon_{c2}\right)+O(\epsilon_{\app}),
           &&\quad(\mbox{see \eqref{eq:bd4Qd}}) \nonumber \\
\epsilon_{{\df},\infty}&=\sqrt{2\sigma_1^2(\xi_1+\xi_2)+2(\zeta_1+\zeta_2)}+O(\sqrt{\epsilon_{\app}}),
           &&\quad(\mbox{see \eqref{eq:bd4Hd}}) \nonumber \\
\epsilon_{{\df},2}
        &\le\sqrt{\min\{r,m\}}\|B\|_2\left[2\sigma_1^2\frac{\rho}{\sigma_r}\epsilon_b
          +\left(\frac {\rho}{\sigma_r}\right)^2\big(2\zeta_1+\zeta_2\big)\right]^{1/2}+O(\sqrt{\epsilon_{\app}}).
           &&\quad(\mbox{see \eqref{eq:bd4Hd:nrmH2}})\nonumber
\end{alignat}
Alternatively, for $\epsilon_{{\df},2}$, also by \Cref{rk:bndappHr2}
$$
\epsilon_{{\df},2}
        \le\sqrt{\min\{r,p\}}\|C\|_2\left[2\rho\epsilon_b
          +\rho^2\big(2\xi_1+\xi_2\big)\right]^{1/2}+O(\sqrt{\epsilon_{\app}}).
$$
It can be seen that both $\epsilon_{{\df},\infty}$ and $\epsilon_{{\df},2}$ are of $O(\epsilon_{\app}^{1/4})$, pretty disappointing.

\section{Concluding Remarks}
For a continuous linear time-invariant dynamic system, the existing global error bound
that bounds the error between a reduced model via balanced truncation and the original
dynamic system assumes that the reduced model is constructed from two exact controllability
and observability Gramians. But in practice, the Gramians are usually approximated by
some computed low-rank approximations, especially when the original dynamic system is large scale. Thus, rigorously speaking, the existing global error bound,
although indicative about the accuracy in the reduced system, is not really applicable.
In this paper, we perform an error analysis, assuming the reduced model is constructed
from two low-rank approximations of the Gramians, making up the deficiency in the current theory for
measuring the quality of the reduced model obtained by approximate balanced truncation.
Error bounds have been obtained for the purpose.

So far, we have been focused on continuous linear time-invariant dynamic systems, but our techniques
should be extendable to discrete time-invariant dynamic systems without much difficulty.

Throughout this paper, our presentation is restricted to the real number field $\bbR$.
This restriction is more for simplicity and clarity than
the capability of our techniques.
In fact, our approach can be straightforwardly modified to cover the complex number case: replace all transposes
$(\cdot)^{\T}$ of vectors/matrices with complex conjugate transposes $(\cdot)^{\HH}$.

\clearpage
\noindent   {\Large {\bf Appendix}}
\appendix
\section{Some results on subspaces}
Consider two subspaces ${\cal U}$ and $\wtd {\cal U}$ with dimension $r$ of ${\mathbb R}^n$ and
let $U\in{\mathbb R}^{n\times r}$ and $\wtd U\in{\mathbb R}^{n\times r}$ be orthonormal basis matrices of
${\cal U}$ and ${\cal U}$, respectively, i.e.,
$$
U^{\T}U=I_r,\,{\cal U}={\cal R}(U), \quad\mbox{and}\quad
\wtd U^{\T}\wtd U=I_r,\,\wtd{\cal U}={\cal R}(\wtd U),
$$
and denote by $\tau_j$ for $1\le j\le r$ in the descending order, i.e.,
$\tau_1\ge\cdots\ge\tau_r$, the singular values of $\wtd U^{\T}U$.
The $r$ {\em canonical angles $\theta_j({\cal U},{\cal \wtd U})$
between ${\cal U}$ to ${\cal \wtd U}$\/} are defined
by
\begin{equation}\nonumber
0\le\theta_j({\cal U},{\cal \wtd U}):=\arccos\tau_j\le\frac {\pi}2\quad\mbox{for $1\le j\le r$}.
\end{equation}
They are in the ascending order, i.e., $\theta_1({\cal U},{\cal \wtd U})\le\cdots\le\theta_r({\cal U},{\cal \wtd U})$. Set
\begin{equation}\nonumber
\Theta({\cal U},{\cal \wtd U})=\diag(\theta_1({\cal U},{\cal \wtd U}),\ldots,\theta_r({\cal U},{\cal \wtd U})).
\end{equation}
It can be seen that these angles are independent of the orthonormal basis matrices $U$ and $\wtd U$ which are not unique.

We sometimes place a matrix in one of or both
arguments of $\theta_j(\,\cdot\,,\,\cdot\,)$ and $\Theta(\,\cdot\,,\,\cdot\,)$ with an understanding that it is about
the subspace spanned by the columns of the matrix argument.

It is known that $\|\sin\Theta({\cal U},{\cal \wtd U})\|_2$
defines a distance metric between ${\cal U}$ and ${\cal \wtd U}$ \cite[p.95]{sun:1987}.
%
%

The next two lemmas and their proofs are about how to pick up two bi-orthogonal basis matrices of two subspaces with acute canonical angles. The results  provide a foundation to some of our argument in the paper.

\begin{lemma}\label{lm:acute-basis}
Let $\cX_1$ and $\cY_1$ be two subspaces with dimension $r$ of ${\mathbb R}^n$. Then
$$\|\sin\Theta(\cX_1,\cY_1)\|_2<1$$ if and only if $Y_1^{\T}X_1$ is nonsingular for any two basis matrices
$X_1,\,Y_1\in\bbR^{n\times r}$  of
 $\cX_1$ and $\cY_1$, respectively.
\end{lemma}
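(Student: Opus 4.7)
My plan has two short steps. First I would reduce the statement to a single, convenient choice of basis matrices by observing that the nonsingularity of $Y_1^{\T}X_1$ is invariant under change of basis for $\cX_1$ and $\cY_1$: any other basis matrices have the form $\check X_1 = X_1 M$, $\check Y_1 = Y_1 N$ with $M,N\in\bbR^{r\times r}$ nonsingular, so $\check Y_1^{\T}\check X_1 = N^{\T}(Y_1^{\T}X_1)M$ and this is nonsingular iff $Y_1^{\T}X_1$ is. Hence it suffices to verify the equivalence for \emph{one} pair of bases, and the natural choice is orthonormal ones.

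Next, let $U\in\bbR^{n\times r}$ and $V\in\bbR^{n\times r}$ be orthonormal basis matrices of $\cX_1$ and $\cY_1$, respectively, so $U^{\T}U=V^{\T}V=I_r$. By the definition given in the excerpt, the singular values $\tau_1\ge\cdots\ge\tau_r$ of $V^{\T}U$ are exactly the cosines $\cos\theta_j(\cX_1,\cY_1)$ of the canonical angles, and consequently
$$
\|\sin\Theta(\cX_1,\cY_1)\|_2 = \sin\theta_r(\cX_1,\cY_1) = \sqrt{1-\tau_r^2}.
$$
Therefore $\|\sin\Theta(\cX_1,\cY_1)\|_2 < 1$ is equivalent to $\tau_r>0$, i.e., to $\sigma_{\min}(V^{\T}U)>0$, i.e., to $V^{\T}U$ being nonsingular. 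Combining with the first step, this is equivalent to $Y_1^{\T}X_1$ being nonsingular for any basis matrices $X_1$ of $\cX_1$ and $Y_1$ of $\cY_1$.

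I do not expect any real obstacle here; the argument is essentially bookkeeping once one notices the basis invariance. The only subtle point worth a sentence of explanation in the write-up is that the definition of $\Theta(\cX_1,\cY_1)$ in the appendix is itself phrased via orthonormal basis matrices, so reducing to orthonormal bases is both legitimate and natural.
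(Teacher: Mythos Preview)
Your proposal is correct and follows essentially the same approach as the paper. The paper proves both directions by orthonormalizing arbitrary basis matrices via $U=X_1(X_1^{\T}X_1)^{-1/2}$, $\wtd U=Y_1(Y_1^{\T}Y_1)^{-1/2}$ and using the identity $\wtd U^{\T}U=(Y_1^{\T}Y_1)^{-1/2}(Y_1^{\T}X_1)(X_1^{\T}X_1)^{-1/2}$, which is exactly your change-of-basis observation written out explicitly; your version simply isolates the basis-invariance step first and then works with a single orthonormal pair, but the mathematical content is identical.
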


\begin{proof}
Suppose that $\|\sin\Theta(\cX_1,\cY_1)\|_2<1$, and let $X_1,\,Y_1\in\bbR^{n\times r}$ be basis matrices of
 $\cX_1$ and $\cY_1$, respectively. Then
\begin{equation}\label{eq:XY2UU}
U=X_1(X_1^{\T}X_1)^{-1/2}, \quad\wtd U=Y_1(Y_1^{\T}Y_1)^{-1/2}
\end{equation}
are two orthonormal basis matrices of
$\cX_1$ and $\cY_1$, respectively. The singular values of $\wtd U^{\T}U$ are $\cos\theta_j(\cX_1,\cY_1)$ for $1\le j\le r$
which are positive because $\|\sin\Theta(\cX_1,\cY_1)\|_2<1$, and hence $\wtd U^{\T}U$ is nonsingular, and since
\begin{equation}\label{eq:XY2UU'}
\wtd U^{\T}U=(Y_1^{\T}Y_1)^{-1/2}Y_1^{\T}X_1(X_1^{\T}X_1)^{-1/2},
\end{equation}
$Y_1^{\T}X_1$ is nonsingular.

Conversely, let $X_1,\,Y_1\in\bbR^{n\times r}$ be basis matrices of
$\cX_1$ and $\cY_1$, respectively, and suppose that $Y_1^{\T}X_1$ is nonsingular. Set $U$ and $\wtd U$ as in
\eqref{eq:XY2UU}. Then $\wtd U^{\T}U$ is nonsingular by \eqref{eq:XY2UU'}, which means $\cos\theta_j(\cX_1,\cY_1)>0$ for $1\le j\le r$,
implying
$$
\|\sin\Theta(\cX_1,\cY_1)\|_2=\max_j\sqrt{1-\cos^2\theta_j(\cX_1,\cY_1)}<1,
$$
as was to be shown.
\end{proof}

\begin{lemma}\label{lm:bi-orth-basis}
Let $\cX_1$ and $\cY_1$ be two subspaces with dimension $r$ of ${\mathbb R}^n$ and suppose that
$\|\sin\Theta(\cX_1,\cY_1)\|_2<1$, i.e.,  the canonical angles between the two subspaces are acute.
\begin{enumerate}[{\rm (a)}]
  \item There exist basis matrices $X_1,\,Y_1\in\bbR^{n\times r}$ of $\cX_1$ and $\cY_1$, respectively, such that
        $Y_1^{\T}X_1=I_r$;
  \item Given a basis matrix $X_1\in\bbR^{n\times r}$ of $\cX_1$, there exists a basis matrix $Y_1\in\bbR^{n\times r}$ of $\cY_1$ such that $Y_1^{\T}X_1=I_r$;
  \item Given basis matrices $X_1,\,Y_1\in\bbR^{n\times r}$ of $\cX_1$ and $\cY_1$, respectively, such that
        $Y_1^{\T}X_1=I_r$, there exist matrices $X_2,\,Y_2\in\bbR^{n\times (n-r)}$ such that
        $$
        [Y_1,Y_2]^{\T}[X_1,X_2]=\begin{bmatrix}
                               Y_1^{\T}X_1 & Y_1^{\T}X_2 \\
                               Y_2^{\T}X_1 & Y_2^{\T}X_2
                             \end{bmatrix}=I_n.
        $$
\end{enumerate}
\end{lemma}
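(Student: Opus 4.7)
The plan is to derive (b) first, from which (a) is immediate upon starting with any basis matrix of $\cX_1$. Fix such an $X_1$ and pick an arbitrary basis matrix $\wtd Y_1 \in \bbR^{n\times r}$ of $\cY_1$. By the hypothesis $\|\sin\Theta(\cX_1,\cY_1)\|_2 < 1$ and \Cref{lm:acute-basis}, the matrix $\wtd Y_1^{\T} X_1$ is nonsingular, so the rescaling $Y_1 := \wtd Y_1 (\wtd Y_1^{\T} X_1)^{-\T}$ is well-defined. Right multiplication by a nonsingular $r\times r$ matrix preserves the column span, so $Y_1$ is still a basis matrix of $\cY_1$, and a direct computation gives $Y_1^{\T} X_1 = (\wtd Y_1^{\T} X_1)^{-1}(\wtd Y_1^{\T} X_1) = I_r$.

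For (c) I would proceed in two stages. Given $X_1, Y_1$ with $Y_1^{\T} X_1 = I_r$, I first take $X_2 \in \bbR^{n\times(n-r)}$ to be any basis matrix of $\ker(Y_1^{\T})$ and $\wtd Y_2 \in \bbR^{n\times(n-r)}$ to be any basis matrix of $\ker(X_1^{\T})$; both null spaces have dimension $n - r$ because $X_1$ and $Y_1$ have full column rank. These choices force $Y_1^{\T} X_2 = 0$ and $\wtd Y_2^{\T} X_1 = 0$, making $[Y_1,\wtd Y_2]^{\T}[X_1,X_2]$ block diagonal with diagonal blocks $I_r$ and $\wtd Y_2^{\T} X_2$. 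In the second stage I would show that $[X_1, X_2]$ and $[Y_1, \wtd Y_2]$ are each nonsingular: if $X_1 a + X_2 b = 0$, left-multiplication by $Y_1^{\T}$ and the relations $Y_1^{\T} X_1 = I_r$, $Y_1^{\T} X_2 = 0$ yield $a = 0$, after which $X_2 b = 0$ forces $b = 0$ by the full column rank of $X_2$; the argument for $[Y_1, \wtd Y_2]$ is symmetric. Nonsingularity of the block-diagonal product then forces the bottom block $\wtd Y_2^{\T} X_2$ to be invertible. Finally I rescale once more, setting $Y_2 := \wtd Y_2 (\wtd Y_2^{\T} X_2)^{-\T}$, which preserves $\cR(Y_2) = \cR(\wtd Y_2) \subseteq \ker(X_1^{\T})$ (so $Y_2^{\T} X_1 = 0$ is maintained) and delivers $Y_2^{\T} X_2 = I_{n-r}$, completing the required identity.

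The only step that is not pure construction-and-verify is the nonsingularity of $\wtd Y_2^{\T} X_2$, and even this collapses to a short linear-independence argument exploiting $Y_1^{\T} X_1 = I_r$ together with the null-space definitions of $X_2$ and $\wtd Y_2$. Once that observation is in place, the entire lemma reduces to two applications of the same one-line rescaling trick used in part (b) to enforce biorthogonality.
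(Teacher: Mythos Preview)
Your proof is correct and follows essentially the same route as the paper's: use the acute-angle hypothesis (via \Cref{lm:acute-basis}) to obtain nonsingularity of the cross product, then rescale one factor to force the identity; for (c), both you and the paper pick bases of the orthogonal complements $\cY_1^{\perp}=\ker(Y_1^{\T})$ and $\cX_1^{\perp}=\ker(X_1^{\T})$, run the same linear-independence argument to show the augmented matrices are nonsingular, and finish with the same rescaling trick. The only cosmetic differences are that you derive (a) from (b) rather than proving them separately, and in (c) you rescale on the $Y$ side whereas the paper rescales on the $X$ side.
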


\begin{proof}
For item (a), first we pick two orthonormal basis matrices $U,\,\wtd U\in\bbR^{n\times r}$ of $\cX_1$ and $\cY_1$, respectively. The assumption $\|\sin\Theta(\cX_1,\cY_1)\|_2<1$ implies that the singular values of $\wtd U^{\T}U$ are $\cos\theta_j(\cX_1,\cY_2)$ for $1\le j\le r$ are positive, and hence $\wtd U^{\T}U$ is nonsingular. Now
take $X_1=U(\wtd U^{\T}U)^{-1}$ and $Y_1=\wtd U$.

For item (b), we note $U=X_1(X_1^{\T}X_1)^{-1/2}$ is an orthonormal basis matrix of $\cX_1$. Let $\wtd U$ be an
orthonormal basis matrix of $\cY_1$. As we just argued,
$$
\wtd U^{\T}U=(\wtd U^{\T}X_1)(X_1^{\T}X_1)^{-1/2}
$$
is nonsingular, implying $\wtd U^{\T}X_1$ is nonsingular. Now take $Y_1=\wtd U(\wtd U^{\T}X_1)^{-\T}$.

Finally for item (c), let $\wtd V,\,V\in\bbR^{n\times (n-r)}$ be any orthonormal basis matrices
of $\cX_1^{\bot}$ and $\cY_1^{\bot}$, the orthogonal complements of $\cX_1$ and $\cY_1$, respectively, i.e.,
$$
\wtd V^{\T}\wtd V=V^{\T}V=I_{n-r}, \quad \wtd V^{\T}X_1=V^{\T}Y_1=0.
$$
We claim that $X:=[X_1,V]\in\bbR^{n\times n}$ is nonsingular; otherwise there exists
$$
0\ne\bx=\begin{bmatrix}
      \bz \\
      \by
    \end{bmatrix},\quad\bz\in\bbR^r,\,\,\by\in\bbR^{n-r}
$$
such that $X\bx=0$, i.e., $X_1\bz+V\by=0$, pre-multiplying which by $Y_1^{\T}$ leads to $\bz=0$, which implies $V\by=0$,
which implies $\by=0$ because $V$ is an orthonormal basis matrix of $\cY_1^{\bot}$, which says $\bx=0$,
a contradiction. Similarly, we know $Y:=[Y_1,\wtd V]\in\bbR^{n\times n}$ is nonsingular, and so is
$$
Y^{\T}X=[Y_1,\wtd V]^{\T}[X_1,V]=\begin{bmatrix}
                             Y_1^{\T}X_1 & Y_1^{\T}V \\
                             \wtd V^{\T}X_1 & \wtd V^{\T}V
                           \end{bmatrix}=\begin{bmatrix}
                                          I_r & 0 \\
                                          0 & \wtd V^{\T}V
                                        \end{bmatrix},
$$
implying $\wtd V^{\T}V$ is nonsingular. Now take
$X_2=V(\wtd V^{\T}V)^{-1}$ and $Y_2=\wtd V$.
\end{proof}

\section{Perturbation for Lyapunov equation}\label{sec:pert-Lyap}
In this section, we will establish a lemma on the change of the solution to
\begin{equation}\label{eq:Lyap-almost}
A^{\HH}X+XA+W=0
\end{equation}
subject to perturbations to $A$ and $W$, along the technical
line of \cite{heke:1988}, where $W$ may not necessarily be Hermitian. It is known as {\em the Lyapunov equation\/}
if $W$ is  Hermitian,
but here it may not be.  The result in the lemma below is used during our intermediate estimates of transfer function.
In conforming to \cite{heke:1988}, we will state the result for complex matrices: $\bbC^{n\times n}$ is the set of all $n$-by-$n$
complex matrices and $A^{\HH}$ denotes the complex conjugate of $A$.


\begin{lemma}\label{lem:sep}
Suppose that $A\in \bbC^{n\times n}$ is stable, i.e., all of its eigenvalues are located in the left half of the complex plane, and let
$$
K=\int_{0}^\infty e^{  A^{\HH}t}e^{At}dt,
$$
which is the unique solution to the Lyapunov equation $A^{\HH}X+XA+I_n=0$.
      Let $W\in\bbC^{n\times n}$ (not necessarily Hermitian) and $X\in \bbC^{n\times n}$ is the solution to the matrix equation
      \eqref{eq:Lyap-almost}.
Perturb $A$ and $W$ to $A+\Delta A_i$ ($i=1,2$) and $W+\Delta W$, respectively, and suppose that the perturbed equation
\begin{equation}\label{eq:pert:Lyap-almost}
(A+\Delta A_1)^{\HH}(X+\Delta X)+(X+\Delta X)(A+\Delta A_2)+(W+\Delta W)=0,
\end{equation}
has a solution $X+\Delta X$, where the trivial case either $A=0$ or $W=0$ is excluded. If
\begin{equation}\label{eq:eta:pert:Lyap}
\eta:=\|K\|_2\sum_{i=1}^2 {\|\Delta A_i\|_2}<1,
\end{equation}
then for any unitarily invariant norm $\|\cdot\|_{\UI}$
\begin{equation}\label{eq:pertLypsol}
\|\Delta X\|_{\UI}
     \le\frac {\|K\|_2}{1-\eta}\left(\|\Delta W\|_{\UI}+\|X\|_{\UI}\sum_{i=1}^2\|\Delta A_i\|_2\right).
\end{equation}
\end{lemma}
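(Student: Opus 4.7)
My plan is to reduce the perturbed equation to a Lyapunov equation for $\Delta X$ with the original coefficient $A$, solve it with the integral representation involving $K$, and then absorb the $\|\Delta X\|_{\UI}$ term that appears on the right-hand side using the smallness assumption $\eta<1$.

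First, I would subtract \eqref{eq:Lyap-almost} from \eqref{eq:pert:Lyap-almost}. After expanding $(A+\Delta A_1)^{\HH}(X+\Delta X)+(X+\Delta X)(A+\Delta A_2)$ and cancelling $A^{\HH}X+XA+W$, one arrives at
\begin{equation*}
A^{\HH}\Delta X + \Delta X A + \Phi = 0,\qquad
\Phi := \Delta W + (\Delta A_1)^{\HH}(X+\Delta X)+(X+\Delta X)\Delta A_2 .
\end{equation*}
Because $A$ is stable, this equation has the unique solution $\Delta X=\int_0^\infty e^{A^{\HH}t}\Phi\,e^{At}\,dt$.

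The central technical ingredient, taken from \cite{heke:1988}, is the Lyapunov-operator bound $\|Y\|_{\UI}\le \|K\|_2\|Z\|_{\UI}$ whenever $A^{\HH}Y+YA+Z=0$, valid in every unitarily invariant norm. For the spectral norm it follows from a Cauchy--Schwarz argument: writing $u^{\HH}Yv=\int_0^\infty(e^{At}u)^{\HH}Z(e^{At}v)\,dt$, one bounds the integrand by $\|Z\|_2\|e^{At}u\|\,\|e^{At}v\|$ and recognises $\int_0^\infty\|e^{At}u\|^2 dt=u^{\HH}Ku\le\|K\|_2$ for a unit vector $u$. The extension to general UI norms uses the fact that $Z\mapsto Y$ is a completely positive map taking $I$ to $K$; after rescaling by $\|K\|_2^{-1}$ it becomes a sub-unital CP map, which is known to be contractive in every unitarily invariant norm.

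Applying this bound with $Z=\Phi$, combined with the triangle inequality and the submultiplicative property $\|MN\|_{\UI}\le\|M\|_2\|N\|_{\UI}$, yields
\begin{equation*}
\|\Delta X\|_{\UI}\le \|K\|_2\Bigl(\|\Delta W\|_{\UI}+\bigl(\|\Delta A_1\|_2+\|\Delta A_2\|_2\bigr)\bigl(\|X\|_{\UI}+\|\Delta X\|_{\UI}\bigr)\Bigr) .
\end{equation*}
Setting $\eta=\|K\|_2\sum_i\|\Delta A_i\|_2<1$, moving the $\|\Delta X\|_{\UI}$ term on the right to the left and dividing by $1-\eta$ produces \eqref{eq:pertLypsol}. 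The main obstacle I foresee is the UI-norm version of the Lyapunov-operator bound; all other steps are routine manipulations of the triangle inequality together with a geometric-series type rearrangement.
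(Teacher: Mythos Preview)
Your approach is exactly the paper's: subtract the two equations to get a Lyapunov equation for $\Delta X$ with the original coefficient $A$, apply the operator bound $\|Y\|_{\UI}\le\|K\|_2\|Z\|_{\UI}$ for solutions of $A^{\HH}Y+YA+Z=0$, and then rearrange the resulting self-referential inequality using $\eta<1$. The paper's proof is a single sentence producing the same intermediate inequality you display, deferring the operator bound entirely to \cite{heke:1988}.

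One genuine gap in your justification of that operator bound: the assertion that a sub-unital CP map is contractive in \emph{every} unitarily invariant norm is false as stated. Sub-unitality $\Psi(I)\le I$ gives contractivity in the spectral norm only; for the trace norm (and hence, by interpolation, for general UI norms) one also needs the adjoint to be sub-unital, $\Psi^*(I)\le I$. A two-Kraus example such as $A_1=e_1e_1^{\T}$, $A_2=e_2e_1^{\T}$ in $\bbR^{2\times 2}$ is unital but doubles the trace norm of $e_1e_1^{\T}$. For the Lyapunov map $\Phi(Z)=\int_0^\infty e^{A^{\HH}t}Ze^{At}\,dt$ the missing condition does hold: $\Phi^*(I)=\int_0^\infty e^{At}e^{A^{\HH}t}\,dt=:K'$ has $\|K'\|_2=\|K\|_2$, since both equal the reciprocal of $\mathrm{sep}_2(A,-A^{\HH})$ and the latter is invariant under $Z\mapsto Z^{\HH}$. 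With both $\Phi(I)$ and $\Phi^*(I)$ bounded by $\|K\|_2 I$, the UI-norm contractivity of $\|K\|_2^{-1}\Phi$ follows from the standard doubly-substochastic/majorization argument. The paper does not spell any of this out; you are simply being more explicit than it is, and the extra sentence about $K'$ would close the hole.
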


Equation \eqref{eq:Lyap-almost} is not necessarily a Lyapunov equation because $W$ is allowed non-Hermitian, not to mention
\eqref{eq:pert:Lyap-almost} for which two different perturbations are allowed to $A$ at its two occurrences.
Equation \eqref{eq:Lyap-almost} has a unique solution $X$ because $A$ is assumed stable, but a solution to the perturbed equation
\eqref{eq:pert:Lyap-almost} is assumed to exist. It is not clear if the assumption \eqref{eq:eta:pert:Lyap} ensures both $A+\Delta A_i$ for $i=1,2$ are stable
and thereby guarantees that \eqref{eq:pert:Lyap-almost} has a unique solution, too, something worthy further investigation.

\begin{proof}[Proof of \Cref{lem:sep}]
Modifying the proof of \cite[Theorem 2.1]{heke:1988}, instead of
\cite[Ineq.~(2.11)]{heke:1988} there, we have
$$
\|\Delta X\|_{\UI}\le\left(\|\Delta W\|_{\UI}+\sum_{i=1}^2\|\Delta A_i\|_2\big[\|X\|_{\UI}+\|\Delta X\|_{\UI}\big]\right)\|K\|_2,
$$
yielding \eqref{eq:pertLypsol}.
\end{proof}

In \cite{heke:1988} for the case $\Delta A_i$ for $i=1,2$ are the same and denoted by $\Delta A$, under the condition of \Cref{lem:sep} but without assuming \eqref{eq:eta:pert:Lyap}, it is proved that
\begin{equation}\label{eq:heke1988-bd}
\frac{\|\Delta X\|_2}{\|X+\Delta X\|_2}
     \le 2\|A+\Delta A\|_2 \|K\|_2\left(\frac{\|\Delta A\|_2}{\|A+\Delta A\|_2}+\frac{\|\Delta W\|_2}{\|W+\Delta W\|_2}\right),
\end{equation}
elegantly formulated in such a way that all changes are measured relatively.
We can achieve the same thing, too. In fact, under the condition of \Cref{lem:sep} but without assuming \eqref{eq:eta:pert:Lyap},
it can be shown that
\begin{equation}\label{eq:pertLypsol'}
\frac{\|\Delta X\|_{\UI}}{\|X+\Delta X\|_{\UI}}
     \le \sum_{i=1}^2\|A+\Delta A_i\|_2 \|K\|_2\left(\frac{\|\Delta A_i\|_2}{\|A+\Delta A_i\|_2}
                +\frac{\|\Delta W\|_{\UI}}{\|W+\Delta W\|_{\UI}}\right).
\end{equation}
But, as we argued at the beginning, \eqref{eq:pertLypsol} is more convenient for us to use
in our intermediate estimations.

\def\noopsort#1{}\def\l{\char32l}\def\v#1{{\accent20 #1}}
  \let\^^_=\v\def\hbk{hardback}\def\pbk{paperback}

%
%

\end{document}